\def\ddispace{\setlength{\itemsep}{2pt}}
\def\htR{\widehat{R}}
\def\int{{\operatorname{int}}}
\def\Lz{L}
\def\Mon{\operatorname{Mon}}
\def\corn{{\operatorname{corn}}}
\def\tot{{\operatorname{comb}}}
\def\bidomains0{bi-\domains0}
\def\bisemifield0{bi-\semifield0}
\def\bisemifields0{bi-\semifields0}
\def\predomain0{pre-\domain0}
\def\predomains0{pre-\domains0}
\def\Cong{\Omega}
\def\cngA{\equiv}
\def\unscr{{\underline{\phantom{w}}}}
\def\SET{\operatorname{Set}}
\def\scrR{\mathscr R}
\def\hom{\operatorname{Hom}}
 \def\ext{\operatorname{Ext}}
\def\LAff{L\operatorname{-TropAff}}
\def\LCoord{L\operatorname{-Coord}}
\def\tlR{\widetilde R}
\def\tlPhi{\widetilde \Phi}
\def\htvrp{\widehat \vrp}
\def\R {R}
\def\mfC{\frak C}
\def\mfD{\frak D}
\def\otM{(\tM, \cdot \;, \leq)}
\def\gr{\operatorname{gr}}
\newcommand{\ds}[1]{\ {#1} \ }
\newcommand{\dss}[1]{\quad {#1} \quad }
\newcommand{\deqv}[1]{{#1}/{ \equiv}}
\newcommand{\deqvb}[1]{{#1}/{\Cong}}
\def\ldR{(\R, L, (\nu_{m,\ell} ))}
\def\ldsPR{(\R, L, s, P,(\nu_{m,\ell} ))}
\def\RLsnu{(\R, L, s, (\nu_{m,\ell}))}
\def\RLsnuT{(\R', L',  s', (\nu'_{m',\ell'}))}
\def\pipeWL{{\underset{L}{\mid}}}
\def\lmodWL{\mathrel  \pipeWL   \joinrel \joinrel =}
\def\httM{\overline{[\tM]}}
\newcommand\boxtext[1]{\pSkip \qquad \qquad \qquad \framebox{\parbox{\ltw}{#1}}\pSkip}
\def\ltw{0.7\textwidth}
\def\FR{R}
\def\vmap{\vartheta}
\newcommand{\xl}[2]{\,\,{^{[#2]}}{#1}\,}
\def\mfa{\frak a}
\def\mfp{\frak p}
\def\lone{1_L}
\def\tSS{\tS}
\def\mcA{\mathcal A}
\newtheorem{theorem}{Theorem}[section]
\newtheorem{definition}[theorem]{Definition}
\newtheorem{example}[theorem]{Example}
\newtheorem{remark}[theorem]{Remark}
\newtheorem{note}[theorem]{Note}
\newcommand{\Real}{\mathbb R}
\newcommand{\Rati}{\mathbb Q}
\newcommand{\Net}{\mathbb N}
\newcommand{\Fld}{\mathbb K}
\newcommand{\one}{\mathbb{1}}
\newcommand{\zero}{\mathbb{0}}
\newcommand{\got}[1]{\frak{#1}}
\newcommand{\trop}[1]{\mathcal{#1}}
\newcommand{\tA}{\trop{A}}
\newcommand{\tD}{\trop{D}}
\newcommand{\tF}{\trop{F}}
\newcommand{\tG}{\trop{G}}
\newcommand{\tH}{\trop{H}}
\newcommand{\tI}{\trop{I}}
\newcommand{\tM}{\trop{M}}
\newcommand{\tS}{\trop{S}}
\newcommand{\tZ}{\trop{Z}}
\newcommand{\To}{\longrightarrow }
\newcommand{\Hom}{\operatorname{Hom}}
\newcommand{\al}{\alpha}
\newcommand{\bt}{\beta}
\newcommand{\gm}{\gamma}
\newcommand{\lm}{\lambda}
\newcommand{\Lm}{\Lambda}
\newcommand{\Val}{Val}
\newcommand{\nVal}{Val}
\newcommand{\id}{\inva{\aad}}
    \newenvironment{proof}{
    \smallskip
    \noindent\emph{Proof.}}{\hfill\(\Box\)
    \bigskip
    } \fi
\newcommand{\bfem}[1]{\textbf{\emph{#1}}}
\newcommand{\ifdef}[3]{\ifthenelse{\equal{#1}{true}}{#2}{#3}}
\def\({\left(}
\def\){\right)}
\def\Box{\operatorname{Box}}
\def\stak(f){\got{P}}
\def\stak{\got{S}}
\def\vrp{\varphi}
\def\htvrp{\widehat{\vrp}}
\def\tlvrp{\widetilde \varphi}
\def\tlphi{\widetilde \phi}
\def\tlk{\tilde k}
\def\tlell{\tilde \ell}
\def\tlv{\tilde v}
\def\tFlay{\mathcal F_{\operatorname{lay}}}
\def\nVal{\operatorname{Val}}
\def\lv{\operatorname{s}}
\def\nuge{\ge_{\nu}}
\def\nucong{\cong_{\nu}}
\def\C{\mathbb C}
\def\Fun{\operatorname{Fun}}
\def\Pol{\operatorname{Pol}}
\def\Lau{\operatorname{Laur}}
\def\maxplus{\mathcal M}
\def\({\left(}
\def\){\right)}
\def\Z{{\mathbb Z}}
\def\Q{{\mathbb Q}}
\def\semiring0{semiring$^\dagger$}
\def\semialg0{semi-algebra$^\dagger$}
\def\semirings0{semirings$^\dagger$}
\def\domain0{domain$^\dagger$}
\def\domains0{domains$^\dagger$}
\def\field0{semifield$^\dagger$}
\def\semifield0{semifield$^\dagger$}
\def\semifields0{semifields$^\dagger$}
\def\bidomain0{bi-\domain0}
\def\module0{module$^\dagger$}
\def\modules0{modules$^\dagger$}
\def\semifield0{semifield$^\dagger$}
\def\nucong{\cong_\nu}
\def\nuge{\ge_\nu}
\newcommand{\etype}[1]{\renewcommand{\labelenumi}{(#1{enumi})}}
\def\eroman{\etype{\roman}}
\def\ealph{\etype{\alph}}
\def\pipeL{{\underset{{L}}{\mid}}}
\def\lmodL{\mathrel  \pipeL \joinrel \joinrel =}
\def\ealph{\etype{\alph}}
\def\pSkip{\vskip 1.5mm \noindent}
\def\diag{\operatorname{diag}}
\def\a{\alpha}
\newtheorem{thm}[theorem]{Theorem}
\newtheorem*{thm*}{Theorem}
\newtheorem{cor}[theorem]{Corollary}
\def\Hom{\operatorname{Hom}}
\newtheorem{lem}[theorem]{Lemma}
\newtheorem{rem}[theorem]{Remark}
\newtheorem{prop*}{Proposition}
\newtheorem{prop}[theorem]{Proposition}
\newtheorem{defn}[theorem]{Definition}
\newtheorem{construction}[theorem]{Construction}
\newtheorem*{examp*}{Example}
\newtheorem*{examples*}{Examples}
\newtheorem*{remark*}{Remark}
\newtheorem{Note}[theorem]{Note}
\newtheorem*{defn*}{Definition}
\def\TropfunoneL{\tF_{\operatorname{LTrop}}}
\def\hTropfunoneL{\widehat{\tF}_{\operatorname{LTrop}}}
\def\Tropfunexp{\mathcal F_{\operatorname{LTrop;exp}}}
\def\Tropfuntwo{\mathcal F_{\operatorname{LTrop;unit}}}
\def\la{\lambda}
\def\Fun{\operatorname{Fun}}
\numberwithin{equation}{section}
\def\M0{M_{\zero}}
\def\id{\operatorname {id}}
\def\SR{R}
\def\SRo{(\SR, + \;,\; \cdot \;, \rone )}
\def\PS{P}
\def\rzero{\zero_\SR}
\def\rone{{\one_\SR}}
\def\mone{{\one_\tM}}
\def\rzero{\zero_\SR}
\def\cFWNV{\operatorname{ValField}}
\def\Valmon{\operatorname{ValMon}}
\def\SValmon{\operatorname{ValMon}^+}
\def\Valmonone{\operatorname{ValMon}_{(\one)}}
\def\Valfld{\operatorname{ValField}}
\def\Valdom{\operatorname{ValDom}}
\def\Mon{\operatorname{Mon}}
\def\Ring{\operatorname{Ring}}
\def\PrePOMon{\operatorname{PPreOMon}}
\def\PreOMon{\operatorname{PreOMon}}
\def\POMon{\operatorname{POMon}}
\def\SPOMon{\operatorname{POMon}^+}
\def\SOMon{\operatorname{OMon}^+}
\def\OMon{\operatorname{OMon}}
\def\SeR{\operatorname{Semir}}
\def\Bipot{\operatorname{Bipot}^\dagger}
\def\SeRo{\SeR^\dagger}
\def\LayDom{\operatorname{LayBidom}^\dag}
\def\QLayPreD{\operatorname{LayPreD}^\dag}
\def\ULayDom{\operatorname{ULayBidom}^\dag}
\def\LayDom{\operatorname{LayBidom}^\dag}
\newcommand{\nPS}[1]{\PS_{(!#1)}}
\newcommand{\nPSo}[1]{\nPS{\one}}
\def\srHom{\varphi}
\def\bfa{ \textbf{a}}
\def\bfi{ \textbf{i}}
\begin{document}


\title[Categorical layered mathematics]
{Categorical notions of \\[2mm] layered tropical algebra and geometry}

\author[Z. Izhakian]{Zur Izhakian}
\address{Department of Mathematics, Bar-Ilan University, Ramat-Gan 52900,
Israel}
\email{zzur@math.biu.ac.il}

\author[M. Knebusch]{Manfred Knebusch}
\address{Department of Mathematics, University of Regensburg, Regensburg,
Germany} \email{manfred.knebusch@mathematik.uni-regensburg.de}

\author[L. Rowen]{Louis Rowen}
\address{Department of Mathematics, Bar-Ilan University, Ramat-Gan 52900,
Israel} \email{rowen@macs.biu.ac.il}

\thanks{This research of the first and third authors is supported  by the
Israel Science Foundation (grant No.  448/09).}

\thanks{The research of the first author has been supported  by the
Oberwolfach Leibniz Fellows Programme (OWLF), Mathematisches
Forschungsinstitut Oberwolfach, Germany.}

\thanks{The second author was supported in part by the Gelbart Institute at
Bar-Ilan University, the Minerva Foundation at Tel-Aviv
University, the Mathematics Dept. of Bar-Ilan University, and the
Emmy Noether Institute. Research on this paper was carried out by the three authors in the Resarch
in Pairs program of the MFO in Oberwohlfach.}

\thanks{We thank Steve Shnider and Erez Sheiner  for explaining the proof of Payne's theorem in our tropical seminar.}
\subjclass[2010]{Primary 06F20, 11C08, 12K10, 14T05, 14T99, 16Y60;
Secondary  06F25, 16D25. }

\date{\today}


\keywords{Tropical categories, tropical algebra, tropical
geometry, valued monoids, valuation,  tropicalization, Zariski
topology}


\begin{abstract} This paper supplements
\cite{IzhakianKnebuschRowen2009Refined}, showing that
categorically the layered  theory is the same as the theory of
ordered monoids (e.g. the max-plus algebra) used in tropical
mathematics. A layered  theory is developed in the context of
categories, together with a ``tropicalization functor'' which
permits us to pass  from usual algebraic geometry to the tropical
world. We consider tropical varieties from this categorical
viewpoint, with emphasis on polynomial functions and their roots.
\end{abstract}

\maketitle




\section{Introduction}
\numberwithin{equation}{section}

Tropical geometry has led to considerable mathematical success in
degenerating various combinatoric questions. At the algebraic
level, the degeneration often has led to the max-plus algebra, but
in certain ways this is too crude a process to preserve many
important algebraic properties. Over the last few years, the
theory of supertropical algebras has been developed in a series of
papers including \cite{IzhakianKnebuschRowen2009Valuation},
\cite{IzhakianRowen2007SuperTropical},
\cite{IzhakianRowen2008Matrices},
\cite{IzhakianRowen2009Equations},  and
~\cite{IzhakianRowen2008Resultants},
 in which classical notions of
commutative algebra pertaining to algebraic varieties, matrices,
and valuations, carry over intrinsically to the ``tropical''
world. This degeneration still is too crude to handle several
issues, such as multiple roots of polynomials. A more refined
structure, called $L$-layered domains, was introduced in
\cite{IzhakianKnebuschRowen2009Refined} together with its basic
traits, in order to be able to preserve more algebraic properties.

Recent years have seen considerable progress in the algebraic
perspective of tropical geometry. Notably, building on work of
Bieri-Groves \cite{BG}, Berkovich \cite{Berk90} and Payne
\cite{Payne09} have shown how to view the analytification of an
affine variety algebraically, in terms of valuations and
multiplicative seminorms extending valuations.

This paper is part of a series including
\cite{IzhakianKnebuschRowen2009Refined} and \cite{IKR6}. In
\cite{IzhakianKnebuschRowen2009Refined} we  showed by example how
 the layered structure can cope with algebraic aspects
of tropical mathematics that are inaccessible to less refined
structures; some of these examples are reviewed here for the
reader's convenience.

Our main purpose in this paper
 is to provide a more formal, unified foundation
for further study, for both the algebraic and geometric aspects.
Since category theory pervades modern mathematics so thoroughly,
one feels obligated to describe the theory in categorical terms,
and indeed this language provides valuable information as to how
the theory should progress,  thereby throwing further light on
tropical geometry. We aim to understand those categories arising
from algebraic considerations,
 focusing on those algebraic aspects of the theory
that reflect most directly on tropical geometry, largely via a
Zariski-type correspondence. To describe these categories in full
detail would involve an inordinate amount of technical detail, so
we often make simplifying assumptions when they do not impact on
the tropical applications. Even so, each aspect of the theory
involves its corresponding categories, and so there are many
categories to be described here.  Although the language involves some technicalities,
we try to keep it to a minimum, leaving subtler matters to  \cite{IKR6}.
Another related paper is \cite{IKR5}, which delves into considerable detail in
the supertropical setting, for which we generalize parts to the layered setting,

To obtain the appropriate functors, we need first to make
categories of the classical ``algebraic world'' and the ``tropical
world.'' Informally, the classical ``algebraic world'' is
described by the categories associated to classical algebraic
geometry, often over the complex numbers $\mathbb C.$

 A deep connection between tropical geometry and valuation theory
 is already implicit in \cite {BG}, and
 it is convenient to work over algebraically closed fields with valuation.
 Thus, as the algebraic  aspect of the tropical theory has developed, $\mathbb C$ has been
replaced by the field of Puiseux series,  an algebraically closed
field endowed with a (nonarchimedean) valuation, whose target is
an ordered group, so it makes sense to work with  ordered groups,
or, slightly more generally,  ordered monoids.

There has been considerable recent interest in developing
algebraic geometry over arbitrary monoids \cite{Berk2011,CHWW},
and we shall draw on their work. One theme of this paper is  how
the assumption of an order on the monoid enriches the theory. The
ordered monoid most commonly used in the tropical world is the
``max-plus'' algebra $\maxplus$ (or its dual, the ``min-plus''
algebra), cf.~\cite{ABG}, \cite{AGG}, \cite{IMS}, and \cite{MS09}.
Our first main result (Proposition~\ref{maxplus1}), which sets the
flavor for the paper, is that the category of ordered cancellative
monoids is isomorphic to the category of bipotent semirings
(without a zero element). Any ordered monoid can be viewed as a
semiring, where multiplication is the given monoid operation and
addition is defined by taking $a+b$ to be  $\max \{ a, b\}$ in
Proposition \ref{maxplus1}. The universal of the appropriate
forgetful functor is constructed in this context, in Proposition
\ref{FrobUn}.

Since the underlying algebraic structures now are semirings, we
switch to the language of semirings in order to be able to adapt
concepts from ring theory and module theory, such as polynomials
and matrices.  We find it more convenient to work in the category
of \textbf{\semirings0}, defined as semirings not necessarily
having a zero element, for the following reasons:

\begin{itemize}
    \item The duality given in Proposition~\ref{rmk:duality} holds for
    \semirings0 but not for semirings; \pSkip
    \item Proofs are usually neater for \semirings0, since the
    zero
    element $\zero$ of a semiring needs special treatment; \pSkip
    \item Many important examples (such as Laurent series and tori) are defined over \semirings0 but
    not over semirings (and in particular, Bieri-Groves' main theorem \cite[Theorem A]{BG} is given
    for multiplicative groups); \pSkip
\item Once we get started with the layered theory, it is more
natural to utilize a 0-layer (an ideal comprised of several
elements) rather than a single   element $\zero$; anyway, one can
recover the element $\zero$ by inserting it into the 0-layer.
\end{itemize}

  One might counter that various critical aspects
of geometry such as intersection theory (which involve curves such
as $xy = \zero$) require a zero element. This turns out to be less
important in the tropical theory since the zero element,
$-\infty$, already is artificial, and can be dealt with at the
appropriate time.

To describe tropicalization categorically, we utilize the category
$\cFWNV$ describing fields with  valuation, or, slightly more
generally, the category $\Valdom$ describing integral domains with
valuations. (In the sequel \cite{IKR6} to this paper, we proceed
still further, with valued rings.) The theory is applicable to
fields with valuation, in particular to the Puiseux series field.
Intuitively, the corresponding tropical category just reformulates
the valuation, where the operations are taken from the target of
the valuation.  Our category  $\Valmon$  (cf.~\S\ref{MonVal}) is described in
the language of monoids, in order to permit other tropicalization
techniques.

At this point, let us stress that one principal role of the
 tropical algebra is to provide an intrinsic algebraic setting for
studying valuations in $\cFWNV$ and their extensions, as described
in \S\ref{RedTro}, via  Maslov dequantization \cite{Litvinov2005}
or the degeneration of ``amoebas'' \cite{Einsiedler8311},
\cite{Pass}, and \cite{Shustin1278}. In a few words, one takes the
power valuation to pass from the Puiseux series field to $\Q$,
viewed as the max-plus algebra. This is formalized in
Remark~\ref{6.20} as the functor $\tF_{\operatorname{val}}$ from
the category of valued monoids to  the category of ordered monoids
(or, equivalently, bipotent semirings).

 Unfortunately, the  algebraic theory of bipotent
\semirings0 is too weak to provide much information without
additional structure. Accordingly, the algebra $\maxplus$ was
extended  to \textbf{extended tropical
arithmetic}~\cite{zur05TropicalAlgebra} which evolved into the
\textbf{supertropical domain}
\cite{IzhakianRowen2007SuperTropical} and then to the
\textbf{layered \domain0} $\scrR(L,\tG)$ of an ordered monoid
$\tG$
 with respect to an indexing \semiring0 $L$, called the the
\textbf{sorting set}, cf.~ \cite[Definition~3.5]{IzhakianKnebuschRowen2009Refined}.
$L$-layered \domains0 become max-plus algebras when $L$ is $ \{ 1
 \}$ and become supertropical domains when  $L$ is $ \{ 1,
\infty\}$.

The general $L$-layered theory, set forth in \S\ref{Sec:SupCat}
and \S\ref{supfun}, has many advantages over the other theories,
as shown in~\cite{IzhakianKnebuschRowen2009Refined}, because it
enables us to distinguish among different ghost levels. This is
really a linguistic distinction, as is explained in the next
paragraph. Nevertheless, there is a definite advantage in making
use of the tools available in the language of layered semirings.

Whereas the supertropical domain enables us to distinguish multiple tropical 
roots (say in the polynomial $f(\la) = (\la +3)^2$) from single roots, it does not
 say anything about the multiplicity of the corner root~3. Thus, it would not enable us to
tell intrinsically whether 3 is a root or a pole of the function~$\frac { (\la +3)^j}{ (\la +3)^k}$,
whereas questions of this sort are answered at once in the layered structure. More sophisticated
geometric examples are given in Example~\ref{lay02}.

For the reader's convenience let us point also to several applications
of the layered structure from
\cite{IzhakianKnebuschRowen2009Refined}:

\begin{itemize}
\item \cite[Theorem~8.25]{IzhakianKnebuschRowen2009Refined} The
$\nu$-multiplicativity of the resultant of tropical polynomials
(in one indeterminate). \pSkip

\item \cite[Theorem~8.33]{IzhakianKnebuschRowen2009Refined} The
multiplicativity of the resultant of products of primary tropical
polynomials (in one indeterminate). \pSkip

\item \cite[Theorem~9.8]{IzhakianKnebuschRowen2009Refined} The
computation of the layered discriminant of a tropical polynomial.
\pSkip

\item \cite[Example~10.6]{IzhakianKnebuschRowen2009Refined}
Multiplicity of roots of tropical polynomials, by means of
\cite[Equation~(9.1)]{IzhakianKnebuschRowen2009Refined}. \pSkip

\item \cite[Example~10.8]{IzhakianKnebuschRowen2009Refined} Unique
factorization in many cases, as well as integration being defined.
\end{itemize}

 Thus, we rely heavily on
Construction~\ref{defn50} in order to pass back and forth between
cancellative ordered monoids and $L$-layered \domains0, and this
should be considered the main thrust of the layering procedure.
Note that Construction~\ref{defn50} is formulated for \semirings0
without $\zero$, in order to avoid complications. The more general
theory is given in  \cite{IKR6}.

Intuitively, to obtain the appropriate layered category one might
expect to take morphisms to be \semiring0 homomorphisms that
preserve the layers, and these indeed play a key role to be
described below. The category of main interest for the tropical
algebraic theory is the category $\ULayDom$ of uniform $L$-layered
\bidomains0, which by Theorem ~\ref{fun1}   is isomorphic to the
category $\SOMon$ of cancellative ordered monoids, under the
natural functor that restricts a  uniform $L$-layered \bidomain0
to its submonoid of tangible elements. In this way, we begin to
see how identifications of categories help guide us in developing
the theory.

This leads us to a delicate side issue. Although the ordered
monoids of interest in the tropical theory are cancellative, such
as the real max-plus algebra or, more generally, any ordered
group, homomorphic images of cancellative monoids need not be
cancellative. Thus, for a rich algebraic theory, we need a way of
passing from arbitrary ordered monoids to layered \semirings0.
Unfortunately the naive generalization of Construction
~\ref{defn50} is not a \semiring0 since distributivity fails!  In
order not to go too far afield in this paper,
 we stick with cancellative monoids, and consider noncancellative monoids in \cite{IKR6}.

In \S\ref{RedTro}, we get to the functor $\TropfunoneL:\SValmon
\to \ULayDom $, which describes the passage to the layered
tropical world. $\TropfunoneL$ is applied to fields with
valuation, in particular the field of Puiseux series, and enables
one to translate equality to the ``surpassing relation'' described
in \cite[\S 3.2]{IzhakianKnebuschRowen2009Refined}. In full
generality the functor
 $\TropfunoneL$ involves   subtleties discussed in \cite{IKR6}.

The functor $\TropfunoneL$ is not faithful, since
 it only measures the action of the given valuation, and does not enable
 us to distinguish among elements having the same value. Thus in \S\ref{FulTro} we also
 introduce briefly the \textbf{exploded tropicalization functor}
 utilized in \cite{Par} and another functor $\Tropfuntwo$   which retains extra information given in Proposition~\ref{fullfun},
 such as  is contained in ``coamoebas.'' Borrowing from classical
 valuation theory, we describe the exploded tropicalization
 functor in terms of the associated graded algebra, noting that in
the case of the field of Puiseux series, the components of the
associated graded algebra can be identified with the base field.

 Although monoids
recently have been seen to provide much of the underpinning for
algebraic geometry, cf.~\cite{Berk2011,CHWW,Pat1,Pat2} for
example, classical algebraic geometry relies for a large part on
roots of polynomials, which can be understood more easily using
\semirings0. Our approach to tropical geometry is to define affine
varieties as sets of ``ghost roots'' of
  polynomials.

 As is well known, and discussed in detail
in~\cite{IzhakianRowen2007SuperTropical}, in contrast to the
classical situation for polynomials over algebras over an infinite
field, different tropical polynomials over a \semiring0 $R$ often
take on the same values identically, viewed as functions.
Furthermore, in max-plus situations one often wants to use
variants such as Laurent polynomials (involving $\la^{-1}$ as well
as $\la$) or polynomials with rational exponents, or even more
generally one could talk in the language of the lattice of
characters and its dual lattice, cf.~\cite[\S 2.2]{Payne08}. Also,
as in classical algebraic geometry, we often want to limit the
domain of definition to a given subset of $R^{(n)}$ such as
an algebraic variety. Thus, we work directly with functions from a
set~$\tSS$ to a layered domain, denoted $\Fun (\tSS,R)$, or, more
specifically, polynomially defined functions, denoted $\Pol
(\tSS,R)$ or Laurent polynomially defined functions, denoted $\Lau
(\tSS,R)$. In Proposition~\ref{funfunc} we check that passing to
the monoid of functions from $\tSS$ to an ordered monoid $\tM$ and
then translating to \semirings0 yields the same categorical theory
as moving first to a bipotent \semiring0 and then passing to its
function \semiring0.

Thus, taking $\tSS\subset R^{(n)}$, we redefine polynomials and
monomials over $R$ intrinsically as functions from $\tSS$ to $R$,
 leading to an analog of the Zariski topology in Definition \ref{Zar1}.
  This enables us to define a coordinate \semiring0 via  Definition \ref{coord2}.

Our view of tropical geometry relies largely on Kapranov's
  Theorem, as extended by~\cite{Payne09}, which describes
the 1:1 correspondence between roots of polynomials and
  corner roots of their tropicalizations. This process is understood
  categorically in terms of the supertropical structure, in
  \S\ref{Kapr}. Ironically, although the
  Kapranov-Payne Theorem can be stated in the language of the
tropicalization functor  $\TropfunoneL$, the exploded
tropicalization functor is   needed (at least implicitly) in order
to carry out the proof.

\subsection{Overview of the major categories and functors in this paper}$ $

In summary, let us review the main algebraic categories and their
uses.

\begin{itemize}
\item The category $\cFWNV$ (resp.~$\Valmon)$ describing fields
with valuation (resp.~integral domains) with  valuation. This is
the ultimate arena of investigation, but its theory often is very
difficult, thereby historically giving rise to tropical
mathematics, which can be thought of as a degeneration of
$\cFWNV$. \pSkip

\item  The category $\SOMon $ of  cancellative ordered monoids,
which is isomorphic to the category $\Bipot$ of bipotent semirings. This is
the traditional algebraic category underpinning tropical
mathematics, but is too coarse a degeneration for many algebraic
arguments, and often requires returning to $\cFWNV$ in proofs.
\pSkip

\item   The category $\ULayDom$ of uniform $L$-layered \bidomains0. For $L = \{ 1 \}$
this is just the category $\SOMon $. For  $L = \{ 1 , \infty \}$
or $L = \{ 0, 1 , \infty \}$ we get the supertropical theory,
which suffices in linear algebra for the investigation of
nonsingular matrices, bases, characteristic polynomials, and
related notions. In order to discuss multiple roots of polynomials
and singularity of curves, one needs to take $L$ containing
$\Net.$ \pSkip

\item  The category of exploded $L$-layered \bidomains0. This is
used by Sheiner and Shnider for the proof of the Kapranov-Payne
theorem, as well as other deep results in the theory.
\end{itemize}

Here is a diagram of the categories under discussion in this
paper, and the main functors connecting them.


$$
\xymatrix @R=2.5pc{
 \SValmon   \ar @{>}[dr]^{\TropfunoneL}   \ar @{>}[r]^{\tF_{\operatorname{val}}} 
 &     \SOMon
    \ar @{>}[dr]^{\Fun_{\Mon}(\tSS,\unscr \,  )} \ar @{>}[d]^{\tFlay} \ar @{<->}[r]^{\tF_{\OMon}}  & \Bipot \ar@{}[r]|-*[@]{\subset}     & \SeRo   \ar@{>}[d]^{\Fun_{\SeRo}(\tSS,\unscr \,  )}
  \\ \Valdom \ar@{}[u]|-*[@]{\bigcup}  \ar @{>}[rd]^{\Tropfunexp}   &
\ULayDom    &  \SOMon \ar @{>}[r] & \SeRo
 \\
 \Valfld \ar@{}[u]|-*[@]{\bigcup}   &
  \ULayDom \times \Ring  \ar@{>}[u]  &
& \\
  }
$$

$\tF_{\operatorname{val}}$, which can be viewed as the customary tropicalization procedure,   formalizes the order valuation on Puiseux series, which in most recent research has replaced the logarithm as the means of tropicalizing a variety.

$\TropfunoneL$, perhaps the most important functor in our theory,  takes us from the classical world of algebraic geometry
to the layered tropical world of this paper.

$\tF_{\OMon}$ is the functor that enables us to pass from ordered monoids to bipotent semirings, thereby putting tools
of semiring theory (such as polynomials) at our disposal.

$\tFlay$  is the functor that enables us to ``layer'' an ordered monoid, and thus pass to the layered theory.

$\Tropfunexp$ is the ``exploded'' functor, which preserves the leading coefficient of the original polynomial when tropicalizing, and thus permits Payne's generalization of Kapranov's theorem (and its application to tropical varieties).

The Fun functors take us to  \semirings0 of functions, thereby enabling us to treat polynomials (as functions).

 At the conclusion of
this paper, we consider how the layered category $\LayDom$ enables
us to define \textbf{corner varieties}, and we relate the
algebraic and geometric categories along the classical lines of
algebraic geometry, obtaining a Zariski-type correspondence in
Proposition~\ref{Zarcor1}.

There also is a functor $\Tropfuntwo:  \Valdom \to \ULayDom
\times {\Valmonone}$ which we did not put into the diagram, whose justification is given in the discussion after Proposition~\ref{fullfun}.

%


\section{Background}

We start with the category $\Mon$ of monoids and their monoid
homomorphisms, viewed in the context of universal algebras, cf.~
Jacobson~\cite[\S 2]{Jac}.

\begin{defn} A \textbf{semigroup} is a set with an associative operation,
usually written multiplicatively as~$\cdot$. A \textbf{monoid}
$\tM:= (\tM,\cdot \ )$ is a semigroup with a unit element
$\one_\tM$. A semigroup $\tM$ is \textbf{(left) cancellative} with
respect to a subset $S$ if for every $a_1,a_2,   \in \tM$, $ b \in
S$,
$$ b \cdot a_1 = b \cdot a _2 \quad \text{ implies} \quad  a_1 =
a_2.$$ $\tM$ is \textbf{cancellative} if $\tM$ is   cancellative with respect to itself.

An element $a$ of $\tM$ is \textbf{absorbing} if $ab = ba = a$ for
all $b\in \tM$. Usually the absorbing element (if it exists) is
denoted as the \textbf{zero element} $\zero_\tM$, but it could
also be identified with $-\infty.$ A semigroup $\tM$ is
\textbf{pointed} if it has an absorbing element $\zero_\tM$. A
pointed semigroup $\tM$ is \textbf{cancellative} if $\tM$ is
cancellative with respect to $\tM \setminus  \{ \zero_\tM \}. $ A subset
$\mfa \subset \tM $ is a \textbf{left (right) semigroup ideal} if
$\tM \mfa \subset \mfa$ ($\mfa \tM  \subset \mfa$). The semigroups (as well as monoids) in this paper
are presumed commutative, so left semigroup ideals are semigroup ideals.

A semigroup $\tM := (\tM, \cdot \,)$ is \textbf{divisible}
if for every $a \in \tM$ and $m\in \Net$ there is $b\in \tM$ such
that $b^m =a.$ 

 A \textbf{semigroup homomorphism} is
a map $\phi : \tM \to \tM'$ satisfying $$ \phi  (a_1 a_2) = \phi
(a_1)\phi (a_2), \quad \forall a_1,a_2 \in \tM.$$ (When dealing
with pointed semigroups, we also require that $\phi (\zero_\tM) =
\zero'_\tM$.)
  A \textbf{monoid homomorphism} is
a semigroup homomorphism $\phi : \tM \to \tM'$ also satisfying $
\phi (\one_\tM) = \one_{\tM'}  .$
\end{defn}

\subsection{Semirings without zero}\label{sem0}

We ultimately work in the environment of semirings (or, more
precisely, semirings without a zero element, which we call a
\textbf{\semiring0}. A standard general reference for the
structure of semirings is \cite{golan92}; also cf.~\cite{Cos}.
Thus, a \semiring0 $(R,+,\cdot \, , \rone)$ is a set $R$ equipped
with two binary operations $+$ and~$\cdot \;$, called addition and
multiplication, together with a \textbf{unit element}~$\rone$ such
that:
\begin{enumerate}
    \item $(\SR, +)$ is an Abelian semigroup; \pSkip
    \item $(\SR, \cdot \ , \rone )$ is a monoid with unit element
    $\rone$; \pSkip
    \item Multiplication distributes over addition on both sides. \pSkip
\end{enumerate}
When the multiplicative monoid $(R, \cdot \, , \rone )$ is
cancellative, we say that $\SRo$ is a \textbf{\domain0}; when $(R,
\cdot \, , \rone )$ is also an Abelian group, we say that $\SRo$
is a \textbf{\semifield0}. As customary, $\Net$ denotes the
positive natural numbers, which is a cancellative \domain0.

\begin{defn}
A \textbf{homomorphism} $\srHom : R \to R'$  between two
\semirings0 is defined as a homomorphism of multiplicative monoids
that also preserves addition, to wit,
$$\text{ $\srHom(a + b) = \srHom(a) + \srHom(b)$  \qquad for all
$a,b \in R$.} $$ A \semiring0 \textbf{isomorphism} is a \semiring0
homomorphism that is 1:1 and onto.  \end{defn}

Thus, we have the category $\SeRo$ of \semirings0 and their
homomorphisms. This is closely related to the category $\SeR$ of
semirings and  semiring homomorphisms, especially since the
semirings $R$ of interest in the tropical theory, besides being
multiplicatively cancellative, have the property that $a+b \ne
\rzero$ unless $a = b = \rzero$; in other words, $R\setminus \{ \rzero \}$ is closed under
 addition.

\begin{rem}\label{embed0} Any \semiring0 $R$ can be embedded in a
semiring $R\cup \{\zero\}$ by formally adjoining a zero element
~$\zero$ satisfying $\zero + a = a + \zero = a$ and $\zero  \cdot
a = a \cdot \zero = \zero ,$ $\forall a \in R\cup \{\zero\}$.
Conversely, if $R$ is a semiring such that $R\setminus \{ \rzero \}$ is closed
under multiplication and addition, then $R\setminus \{ \rzero \}$ is a
\semiring0.\end{rem}

\begin{prop} The category $\SeRo$ is isomorphic to a subcategory of the
category $\SeR$.\end{prop}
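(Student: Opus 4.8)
The plan is to promote Remark~\ref{embed0} to a functor $F:\SeRo\to\SeR$ and show it is an isomorphism onto its image. On objects, set $F(R):=R\cup\{\zero\}$, the semiring obtained by formally adjoining an element $\zero\notin R$ that is additively neutral and multiplicatively absorbing; Remark~\ref{embed0} already records that this is a semiring, with the same unit $\rone$. On a \semiring0 homomorphism $\srHom:R\to R'$, let $F(\srHom):R\cup\{\zero\}\to R'\cup\{\zero'\}$ be the extension of $\srHom$ with $F(\srHom)(\zero):=\zero'$.

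First I would verify that $F$ is well defined on morphisms, i.e. that $F(\srHom)$ is a homomorphism of semirings. On $R$ it coincides with $\srHom$, which already respects $+$, $\cdot$ and $\rone$; all remaining cases involve $\zero$ and follow at once from the defining identities $\zero+a=a$ and $\zero\cdot a=\zero$ together with their $\srHom$-images $\zero'+\srHom(a)=\srHom(a)$ and $\zero'\cdot\srHom(a)=\zero'$, while preservation of the zero element is built into the definition. Functoriality is then immediate: $F(\id_R)$ restricts to the identity of $R$ and fixes $\zero$, hence equals $\id_{F(R)}$, and for composable $\srHom,\psi$ both $F(\psi\circ\srHom)$ and $F(\psi)\circ F(\srHom)$ restrict to $\psi\circ\srHom$ on $R$ and send $\zero$ to the zero of the target.

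Next I would note that $F$ is faithful, since $F(\srHom)$ is determined by $\srHom$ as its restriction to $R$, and injective on objects, since the zero of a semiring is the unique element that is both additively neutral and absorbing, so $R\cup\{\zero\}=R'\cup\{\zero'\}$ as semirings forces $\zero=\zero'$ and hence $R=R'$ with the same operations. (The one bookkeeping point, and the sole place calling for care, is the standing convention that the adjoined $\zero$ lies outside $R$, which is what makes injectivity on objects literal.) The conclusion is now formal: the image of a functor is closed under identities and composition, so $\mathcal C:=\operatorname{Im}(F)$ is a subcategory of $\SeR$ — concretely, the semirings $S$ whose nonzero part $S\setminus\{\zero_S\}$ is a \semiring0, together with the semiring homomorphisms sending no nonzero element to $\zero$ — and the corestriction $F:\SeRo\to\mathcal C$ is a bijection on objects and, for each pair of objects, a bijection on the corresponding hom-sets (injectivity being faithfulness, surjectivity holding by definition of the image). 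Such a functor is an isomorphism of categories, with inverse $S\mapsto S\setminus\{\zero_S\}$ on objects and $\phi\mapsto\phi|_{S\setminus\{\zero_S\}}$ on morphisms, which is well defined by Remark~\ref{embed0}. Since the content resides entirely in that remark, I expect no genuine obstacle beyond this bookkeeping.
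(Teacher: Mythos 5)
Your proof is correct and follows the same route as the paper's: adjoin a zero element via Remark~\ref{embed0}, extend each homomorphism by sending zero to zero, and observe that this yields an isomorphism onto the image. The paper states this in one sentence; you have merely supplied the routine categorical bookkeeping (functoriality, faithfulness, injectivity on objects) that the paper leaves implicit.
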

\begin{proof} We just apply Remark~\ref{embed0}, noting that any
 \semiring0 homomorphism $\vrp: R \to R'$ can be extended to
 a semiring  homomorphism $\vrp: R\cup \{\zero_R\} \to R'\cup \{\zero_{R'}\}$
 by putting $\vrp(\zero_R) = \zero_{R'}.$
\end{proof}

 An \textbf{ideal} $\mfa$ of a \semiring0  $R$ is
defined  to be a sub-semigroup
 of $(R,+)$ which is also a (multiplicative) semigroup ideal of~$(R,\cdot \, , \rone)$.
(Clearly, when $R$ has a zero element $\rzero$, then $\rzero \in
\mfa$.)
%
%
%

\begin{example}  If $R$ is a semiring, then $\{ \rzero\}$ is an ideal of the semiring $R\cup \{ \rzero \}$ of
Remark~\ref{embed0}.
\end{example}

%

The tropical theory is closely involved with certain kinds of \semirings0.
 \begin{defn}\label{bip}   A  \semiring0 $R$ is \textbf{idempotent} if $$a+a = a  \qquad \forall a\in R;$$  $R$ is \textbf{bipotent} if $$a+b \in \{
a, b\} \qquad \forall a,b\in R.$$
\end{defn}

The max-plus algebra is the prototype of a bipotent \semiring0.

\subsection{Congruences}
Unfortunately, kernels, such an important feature of category
theory, play virtually no role in the general structure theory of
semirings. In ring theory, the kernel $\vrp^{-1}(\zero_{R'})$ of
any onto homomorphism $\vrp: R \to R'$ is an ideal $\mfa$ of $R$,
and furthermore one can recover $R'$ as isomorphic to $R/\mfa$.
This is not the case with \semirings0. Ideals do not play such a
powerful  role in the structure theory of \semirings0, since the
construction $R/\mfa$ is problematic for an arbitrary ideal $\mfa$
(the difficulty arising from the fact that distinct cosets need
not be disjoint).

Instead, one needs to consider more generally
equivalence relations  preserving the \semiring0 operations.
From the general theory of universal algebra, one defines a
 \textbf{congruence} $\Cong $ of an algebraic structure $\mathcal A$ to be an equivalence relation
 $\equiv$ which preserves
 all the relevant operations and relations;  we call  $\equiv$ the \textbf{underlying  equivalence} of $\Cong $. Equivalently, a congruence $\Cong$ is a sub-\semiring0 of $\mathcal A\times \mathcal A$ that contains the diagonal $$\diag (\mathcal A):= \{ (a,a): a \in \mathcal A \}$$ as described in Jacobson~\cite[\S
 2]{Jac}.  In other words,
 writing the underlying equivalence relation as $a \equiv b$ whenever $(a,b) \in \Cong,$
we require that $\equiv$ preserves
 all the relevant operations and relations.

 \begin{rem}\label{cong10} We recall some key results of \cite[\S
 2]{Jac}:

 \begin{itemize}\item Given a congruence  $\Cong$ of an algebraic
 structure $\mathcal A$, one can endow the set $$\deqvb{\tA} := \{ [a]: a \in \mathcal A \}$$ of equivalence
 classes  with the same (well-defined) algebraic structure, and the map
 $a \mapsto [a]$ defines an onto homomorphism $\mathcal A\to\deqvb{\tA}$. (For
 this reason,
 Berkovich~\cite{Berk2011} calls them ``ideals,'' but  this
 terminology conflicts with some of the literature, and we prefer
 to reserve the usage of ``ideal'' for the usual connotation.)
 \pSkip

\item  For any homomorphism $\vrp:\mathcal A \to \mathcal A',$ one
can define a congruence $\Cong $ on $\mathcal A$ by saying that $a
\equiv b$ iff $\vrp(a) = \vrp (b).$ Then $\vrp$ induces a 1:1
homomorphism
$$\tlvrp :\deqv{\tA}  \ds \to  \mathcal A',$$ via $\tlvrp ([a]) =
\vrp(a)$.
\end{itemize}
 \end{rem}

We repeat the definition of congruence in each specific case  that
we
 need. Thus, a  congruence $\Cong$ on a semigroup $\tM$ is an equivalence relation
that preserves multiplication, in the sense that if $a_1 \equiv
b_1$ and $a_2 \equiv b_2,$ then $a_1a_2 \equiv b_1 b_2.$
 In this case, the set of
equivalence classes $\tM/\Cong $ becomes a semigroup under the
operation
$$[a][b] = [ab],$$ and there is a natural semigroup
 homomorphism given by $a \mapsto [a].$ When $\tM$ is a monoid,
 this becomes a monoid homomorphism, since $[\one_M]$ is the
 multiplicative unit of  $\deqvb{\tM}$. When $\tM$ is
pointed, then $\deqvb{\tM}$ is also pointed, with absorbing
element $[\zero_\tM].$

Here is another instance of a congruence that comes up in the
passage from arbitrary monoids to cancellative monoids.

\begin{example}\label{Pass011}
Given an equivalence relation $ \equiv$  on a semigroup $\tM$, and
a sub-semigroup $S$ of $\tM$, we define the equivalence  $
\equiv_S$ given by $b_1 \equiv_S b_2$ if $b_1s \equiv  b_2s$
 for some $s\in S.$ When $ \equiv$  defines a congruence~ $\Cong$, then $ \equiv_S$
 also defines a congruence $\Cong_S.$   This congruence then identifies
$b_1$ and $b_2,$ thereby eliminating instances of
non-cancellation, and is a useful tool.
\end{example}


\subsubsection{Congruences over \semirings0}

The congruence $\Cong$ is a \semiring0 congruence on a \semiring0
$R$ iff
\begin{equation}\label{cong1}
a_1 \equiv a_2 \text{ and } b_1 \equiv b_2 \quad  \text{imply}  \quad  \left
\{
\begin{array}{lll}
a_1 +b_1 & \equiv &  a_2+b_2, \\a_1  b_1 &  \equiv & a_2 b_2.
\end{array}
\right.\end{equation}

 \begin{lem}\label{Pass2} To verify the conditions in  \eqref{cong1} for commutative \semirings0, it is enough to
assume $b_1 = b_2$ and show for all $a_1,$ $a_2,$ and $b$ in
$R$:

\begin{equation}\label{cong11}
a_1 \equiv a_2   \quad  \text{implies}  \quad
a_1 +b \equiv    a_2+b; \end{equation}

\begin{equation}\label{cong12}
a_1 \equiv a_2   \quad  \text{implies}  \quad
a_1  b \equiv   a_2 b. \end{equation}
 \end{lem}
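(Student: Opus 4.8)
The plan is to show that the two one-variable conditions \eqref{cong11} and \eqref{cong12} together imply the full two-variable condition \eqref{cong1}, exploiting commutativity and transitivity of $\equiv$ throughout. First I would address addition: assuming $a_1 \equiv a_2$ and $b_1 \equiv b_2$, I apply \eqref{cong11} with the fixed element $b_1$ to get $a_1 + b_1 \equiv a_2 + b_1$. Then, since addition is commutative, $a_2 + b_1 = b_1 + a_2$ and $a_2 + b_2 = b_2 + a_2$, so a second application of \eqref{cong11} — this time to $b_1 \equiv b_2$ with the fixed element $a_2$ — yields $b_1 + a_2 \equiv b_2 + a_2$, i.e. $a_2 + b_1 \equiv a_2 + b_2$. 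Transitivity of $\equiv$ then gives $a_1 + b_1 \equiv a_2 + b_2$, as required.

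The multiplicative case is entirely parallel: from $a_1 \equiv a_2$ and \eqref{cong12} with the fixed element $b_1$ I get $a_1 b_1 \equiv a_2 b_1$; then using commutativity of multiplication and \eqref{cong12} applied to $b_1 \equiv b_2$ with the fixed element $a_2$, I obtain $a_2 b_1 = b_1 a_2 \equiv b_2 a_2 = a_2 b_2$; transitivity closes the argument to give $a_1 b_1 \equiv a_2 b_2$. Combining the two displayed conclusions is exactly the right-hand side of \eqref{cong1}, so the conditions \eqref{cong11} and \eqref{cong12} suffice to verify that $\Cong$ is a \semiring0 congruence.

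There is essentially no serious obstacle here — the proof is a routine "change one coordinate at a time" argument. The only point requiring a small amount of care is that \eqref{cong11} and \eqref{cong12} as stated fix the \emph{second} summand/factor (they vary $a_1, a_2$ while holding $b$ fixed); to move the other coordinate one must first commute, which is why commutativity is an explicit hypothesis of the lemma and cannot be dropped. I would state the addition argument in full and then remark that the multiplication argument is identical with $+$ replaced by $\cdot$\,.
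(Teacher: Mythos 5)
Your proof is correct and is essentially the paper's own argument: the paper's proof is just the chain $a_1+b_1 \equiv a_2+b_1 \equiv a_2+b_2$ (and likewise for products), which is your ``change one coordinate at a time'' argument with commutativity and transitivity left implicit. You have merely spelled out the details the paper suppresses, so nothing further is needed.
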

\begin{proof} $a_1 +b_1 \equiv    a_2+b_1  \equiv    a_2+b_2.$
Likewise,
  $a_1 b_1 \equiv    a_2b_1  \equiv    a_2b_2.$
\end{proof}
It often turns out that  \eqref{cong11}  enables us to obtain
\eqref{cong12}. On the other hand, in the case of \semifields0,
multiplicative cosets are more easily described  than additive
cosets, as is described in detail in~ \cite{HW}. To wit, let $N :=
\{ a\in R: a \equiv \rone \}.$ For any $a \in R$
we have $ab_1 \equiv ab_2$ iff $b_1 b_2^{-1} \in N.$

%

We write $\diag(\tM)$ for $\{ (a,a): a \in \tM \}$. As
Berkovich~\cite{Berk2011} points out, any semigroup ideal $\mfa$ of a
semigroup $\tM$ gives rise to the congruence $( \mfa \times \mfa ) \cup
\diag (\tM),$ which corresponds to the Rees factor semigroup, and
the analogous statement holds for monoids. A wrinkle emerges when
we  move to bipotent \semirings0, since $( \mfa \times \mfa) \cup
\diag(\tM)$ need not be closed under addition. Thus, the
applications are limited, and are discussed in \cite{IKR6}.

\begin{defn} An \textbf{identity} $f = g $ of a \semiring0 $R$ is an elementary sentence $f(x_1,\dots, x_m)  = g(x_1,\dots, x_m) $ that holds for all $x_1,\dots, x_m$  in $R$.
\end{defn}
\begin{rem}\label{Pass00}
Suppose we want to force a  \semiring0 $R$ to satisfy a particular
identity, in the sense that we want a  \semiring0 $ \bar  R$ in
which $f = g $ is an identity, together with a surjective
homomorphism $\varphi: R \to  \bar R$ satisfying the universal
property that any homomorphism of $R$ to a \semiring0 satisfying
the identity $f = g $ factors through $\varphi$.

Intuitively, one must mod out the relation  $f = g $ by putting
$f(a_1,\dots, a_m) \equiv g(a_1,\dots, a_m) $ for all $a_i$ in $R$.
 For  \semirings0, in view of Lemma~\ref{Pass2}, since we are dealing with congruences,
we must mod out the equivalence relation obtained  by putting
$f(a_1,\dots, a_m) +c  \equiv g(a_1,\dots, a_m) +c $ and
$f(a_1,\dots, a_m) c  \equiv g(a_1,\dots, a_m) c $  for all $c$
and $a_i$ in $R$.
\end{rem}

\begin{example}\label{Pass000}
Consider the  \textbf{additive idempotence} identity  $x+x = x.$
We attain this by imposing the equivalence relation   given by
$a+a \equiv a, \forall a \in R.$ The congruence that it generates
must also satisfy the relation $a+a +c \equiv a+c, \forall a \in
R.$

But then we also get  \eqref{cong12}, since  $ab + ab = (a+a)b$
and
$$(a+a+c)b = ab + ab + cb \equiv ab+cb = (a+c)b
 .$$ Thus, \eqref{cong11} already defines the congruence. (This observation is to be elaborated
shortly.)

Note that additive idempotence implies all identities of the form $x+x + \dots + x = x.$
\end{example}

 Whereas in ring theory the equivalence class $[\rzero]$ determines
 the
congruence~ $\Cong$, this is no longer the case for semirings, and
we need to consider all the classes $\{ [a]: a \in R\}.$ This is
another reason that we do not require the element $\zero$ in a
\semiring0, for it has lost much of its significance.
Nevertheless, ideals do play a role in the layered algebraic
theory, pursued in a different
paper~\cite{IzhakianRowen2011Ideals}.

\begin{lem}\label{catid1} The bipotent \semirings0 comprise a full
subcategory $\Bipot$ of $\SeRo$.\end{lem}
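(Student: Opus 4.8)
The plan is to observe that the lemma is essentially a matter of unwinding the definition of a \emph{full subcategory}, so that nothing needs to be verified beyond the fact that $\Bipot$ is genuinely a category sitting inside $\SeRo$. Recall that a full subcategory of $\SeRo$ is determined simply by specifying a subclass of its objects, all $\SeRo$-morphisms between them being retained as the morphisms. Accordingly, I would first declare the objects of $\Bipot$ to be exactly the bipotent \semirings0 in the sense of Definition~\ref{bip}, and its morphisms to be all \semiring0 homomorphisms between such objects; this subclass of objects is well-defined and nonempty, the max-plus algebra being a bipotent \semiring0.

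Next I would check that this data indeed forms a subcategory of $\SeRo$. For any bipotent \semiring0 $R$ the identity map $\id_R$ is a \semiring0 homomorphism, and it is a morphism of $\Bipot$ since its source and target both equal the bipotent \semiring0 $R$. Likewise, if $\vrp\colon R\to R'$ and $\psi\colon R'\to R''$ are \semiring0 homomorphisms with $R$, $R'$, $R''$ bipotent, then $\psi\circ\vrp$ is again a \semiring0 homomorphism whose source $R$ and target $R''$ are bipotent, hence a morphism of $\Bipot$; associativity of composition and the unit laws are inherited verbatim from $\SeRo$. Fullness then holds tautologically, because by construction $\hom_{\Bipot}(R,R') = \hom_{\SeRo}(R,R')$ for all bipotent \semirings0 $R$, $R'$.

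The only thing to be careful about is the bookkeeping above; there is no genuine obstacle, since \emph{no} closure property of the class of bipotent \semirings0 is needed for a full subcategory — one merely needs the class of objects to be well-defined, which Definition~\ref{bip} supplies. If desired, I would append the complementary remark that $\Bipot$ is in fact replete: if $\vrp\colon R\osr R'$ is a \semiring0 isomorphism and $R$ is bipotent, then for $a',b'\in R'$, writing $a'=\vrp(a)$ and $b'=\vrp(b)$, we get $a'+b' = \vrp(a+b)\in\vrp(\{a,b\}) = \{a',b'\}$, so $R'$ is bipotent as well; but this lies beyond what the lemma asks.
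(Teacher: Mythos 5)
Your argument is correct, but it takes a different route from the paper's, and the difference is worth noting. You read ``full subcategory'' in the purely formal sense --- a subclass of objects with \emph{all} ambient morphisms retained --- so that the lemma reduces to bookkeeping, and your bookkeeping is fine. The paper's one-line proof instead carries the actual mathematical content: it verifies that any \semiring0 homomorphism $\vrp\colon R\to R'$ between bipotent \semirings0 automatically satisfies $\vrp(a)+\vrp(b)=\vrp(a+b)\in\{\vrp(a),\vrp(b)\}$, i.e.\ that the bipotence relation (equivalently, the induced total order of Proposition~\ref{maxplus20}) is preserved without imposing any extra condition on the morphisms. That is the point of stating the lemma at all: a priori one might define the category of bipotent \semirings0 with only ``bipotence-preserving'' maps as morphisms, and the computation shows this agrees with the full subcategory; the same computation is what makes $\SeRo$-homomorphisms correspond to order-preserving monoid homomorphisms in Proposition~\ref{maxplus1}. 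So your proof buys logical economy at the cost of the fact the paper actually needs downstream; your closing repleteness remark partially recovers it (it is essentially the same calculation applied to an isomorphism), but it would be better to record the computation for an arbitrary homomorphism, as the paper does.
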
\begin{proof} If
$\varphi: R \to R'$ is a \semiring0 homomorphism and $a+b \in
\{a,b\},$ then
$$\varphi (a) +\varphi (b)= \varphi (a+b) \in \{\varphi (a),\varphi
(b)\}.$$
\end{proof}

\subsection{Hom and Adjoint functors}

We need to use some well-known facts about categories.

\begin{defn}\label{hom0}
For any category  $\mfC$ and  some given object $A$ in $\mfC$, we
recall the well-known covariant functor
$$\hom(A,\underline{\phantom{M}}\,): \mfC \to \SET,$$  which sends
an object $B $ in $\mfC$  to $\hom (A,B),$  and which sends the
morphism $\phi: B \to B'$ to the morphism $\Hom(\unscr ,\phi ):
\hom(A ,B)\to \hom(A ,B')$ given by $f\mapsto \phi f$ for $f: A
\to B.$

Likewise, given an object $B$ in $\mfD$,  we define the
contravariant functor
$$\hom(\underline{\phantom{M}}\, ,B): \mfD \to \SET$$ which sends an object $A$ to $\hom (A,B),$
and which sends the morphism $\vrp: A' \to A$ to the morphism
$\Hom(\vrp,\unscr \ ): \hom(A' ,B)\to \hom(A,B )$ given by $ f
\mapsto f \vrp$ for $f: A \to B.$
\end{defn}

 Recall that a functor $\tF:
\mfC \to \mfD$ is a \textbf{left adjoint} to $\tH: \mfD \to \mfC$
 (and $\tH$ is a \textbf{right adjoint} to $\tF$)
 if there is a canonical identification $\Psi:\Hom( \tF (A),B) \to \Hom (A,\tH(B))$
 for all objects $A$ of $ \mfC $ and $B$ of  $ \mfD $, for which the
 following diagrams are always commutative for all morphisms $\vrp : A \to A'$ and $\phi: B \to B'$:
 $$
\xymatrix{ \Hom (\tF(A),B)  \ar @{>}[d]^{\Psi }  \ar
@{>}[rr]^{\Hom(\tF(\vrp),\unscr \ )} & & \Hom (\tF(A'),B)
\ar @{>}[d]^{\Psi }   \\
 \Hom (A,\tH(B)) \ar @{>}[rr]^{\Hom(\vrp,\unscr \ )}  &  &  \Hom (A',\tH(B))  ,}
$$
 $$
\xymatrix{ \Hom (\tF(A),B)  \ar @{>}[d]^{\Psi }  \ar
@{>}[rr]^{\Hom(\unscr ,\phi)} & & \Hom (\tF(A),B')
\ar @{>}[d]^{\Psi }   \\
 \Hom (A,\tH(B)) \ar @{>}[rr]^{\Hom(\unscr , \tH(\phi))}  &  &  \Hom (A,\tH(B'))  .}
$$

It is well-known that any left adjoint functor is unique up to
isomorphism.

\subsection{Universals}

 Recall from \cite[\S1.7 and~ \S1.8]{Jac} that the adjoint functor of a functor $\tF:
\mfC \to \mfD$ is obtained by identifying the appropriate
universal $U$ of  $\tF$ together with the canonical morphisms
$\iota: D \to \tF(U(D))$, for objects $D$ in $\mfD,$  satisfying
the property that for any morphism $f: D\to \tF(C)$ in~$\mfD $ and
object $C$~ in~$\mfC$, there is a morphism $g: U(D) \to C$ in
$\mfC $ such that
$$\tF(g \circ \iota) = f.$$ The example used in \cite{CHWW} is
for the forgetful functor from $K$-algebras to monoids; its
universal is the monoid algebra $K[\tM]$ of a monoid $\tM$.

\begin{example}\label{Pass0}  We  define the forgetful functor $ \SeRo \to \Mon,$ by
forgetting addition. The appropriate universal in this case is the
\textbf{monoid \semiring0} $\Net [\tM],$ defined analogously to
the  monoid algebra. \pSkip
\end{example}

\begin{defn}\label{monoidsem} $[\tM]$ denotes the \semiring0
obtained by taking $\Net [\tM]$ modulo the additive idempotence congruence
of Example~\ref{Pass000}. Explicitly, $[\tM]$ is comprised of
formal sums of distinct elements of the monoid  $\tM$, i.e.,
$$[\tM] = \bigg\{ \sum _{a\in S} a \ds {|}  S \subset \tM \bigg
\}$$ endowed with addition $$\sum _{a\in S} a + \sum _{a\in S'} a
= \sum _{a\in S\cup S'} a \ ,$$ and multiplication is obtained
from the original multiplication in $\tM$, extended
distributively. \pSkip
\end{defn}

\begin{example}\label{Pass1} 
Since additive idempotence defines an identity, one has the
category of additively idempotent \semirings0; the forgetful
functor to $\Mon$ now has the universal $[\tM]$.
The customary way to view tropical mathematics is by means of the
max-plus \semiring0, which is additively idempotent.\end{example}

\section{Pre-ordered semigroups, monoids, and semirings}

Recall that a \textbf{partial pre-order} is a transitive relation
$( \le )$; it is called a \textbf{partial order} if it is
antisymmetric, i.e., $a \le b$ and $ b\le a$ imply $a = b$.  We write $a<b$ when $a \le b$ but $a \ne b.$

A partial pre-order is
called a  \textbf{preorder} if any two elements are
comparable.
A \textbf{(total) order} is a partial order which is also
 a preorder.

\subsection{Pre-ordered semigroups}

We work with pre-ordered semigroups in this paper. The natural
 definition in terms of universal algebra is the following:

\begin{defn}\label{ordered1} A semigroup $\tM := (\tM, \cdot \, )$ (or a monoid $\tM := (\tM, \cdot \, , \one_\tM)$) is  \textbf{partially
pre-ordered} (resp.~\textbf{partially ordered},  \textbf{pre-ordered}, \textbf{ordered})
if it has a  partial pre-order $\le$ (resp. ~partial  order,  pre-order,
order) such that
\begin{equation}\label{dist10} b\le c \quad \text{implies}
\quad ab \le ac \quad \text{and}  \quad ba \le ca, \quad \forall a
\in \tM.\end{equation}\end{defn}
We denote an ordered semigroup by $\otM$. Thus, totally ordered
semigroups satisfy the following property:
\begin{equation}\label{dist1} a \max \{ b, c \} = \max \{ ab,
ac\}, \qquad \forall a,b,c \in \tM.\end{equation} We say that the
relation $(\leq)$ is \textbf{strict} if
\begin{equation}\label{dist12} b< c \quad \text{implies} \quad ab <ac \quad \text{and}
 \quad ba <
ca, \quad \forall a \in \tM.\end{equation}

 \begin{Note}
This definition
 requires that all elements of $\otM$ are positive or 0, an implicit assumption made throughout this
 paper, to  be discussed after Definition~\ref{ordered2}.\end{Note}

\begin{lem} A total  order $(\le)$ on a  semigroup $\tM$ is strict iff the  semigroup $\tM$ is
cancellative. \end{lem}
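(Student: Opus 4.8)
The plan is to prove both directions of the equivalence by unwinding the definitions. Recall that strictness means $b < c$ implies $ab < ac$ for all $a$, while cancellativity (for the not-necessarily-pointed case) means $ab = ac$ implies $b = c$. Throughout I use that the order is \emph{total}, so any two elements are comparable, and I use the monotonicity axiom \eqref{dist10}: $b \le c$ implies $ab \le ac$.

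For the direction ``strict $\Rightarrow$ cancellative'', I would argue by contraposition on the conclusion. Suppose $ab = ac$ but $b \ne c$. By totality, either $b < c$ or $c < b$; without loss of generality $b < c$. Strictness then gives $ab < ac$, contradicting $ab = ac$. Hence $b = c$, so $\tM$ is cancellative.

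For the converse ``cancellative $\Rightarrow$ strict'', suppose $b < c$, i.e., $b \le c$ and $b \ne c$. By \eqref{dist10} we already have $ab \le ac$. It remains to rule out $ab = ac$; but cancellativity applied to $ab = ac$ would force $b = c$, contradicting $b \ne c$. Therefore $ab < ac$, and similarly $ba < ca$, so the order is strict.

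I do not expect any serious obstacle here — the lemma is essentially a definitional observation, and the only point requiring care is the appeal to totality of the order in the first direction (to split into the two cases $b<c$ and $c<b$), together with remembering that we are in the setting of \semirings0 / monoids without a zero element, so cancellativity is cancellativity with respect to the whole semigroup and no special handling of an absorbing element is needed. One might also remark that the hypothesis of totality is genuinely used: for a merely partially ordered semigroup, strictness and cancellativity need not coincide.
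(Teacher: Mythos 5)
Your proof is correct and follows essentially the same route as the paper: the forward direction by splitting $b\ne c$ into $b<c$ or $c<b$ via totality and deriving a contradiction from strictness, and the converse by combining monotonicity ($ab\le ac$) with cancellativity to exclude $ab=ac$. The only difference is that you spell out the appeal to \eqref{dist10} in the converse, which the paper leaves implicit.
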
 \begin{proof} $(\Rightarrow):$ Suppose $ab
= ac$. By symmetry, we may assume that $b \le c.$ But if $b < c$
then $ab < ac$, a contradiction, so  we conclude $b =c $.

$(\Leftarrow):$ If $b < c$ then $ab \ne ac$, implying $ab < ac$.
 \end{proof}

Let us construct the appropriate categories.

\begin{defn}
 An \textbf{order-preserving} semigroup homomorphism is a
 homomorphism $\phi:  \tM  \to \tM'$ satisfying the
condition (where $\le$ denotes the partial order on the
appropriate semigroup):
\begin{equation}\label{comm12} a \le b \quad \text{implies}  \quad \phi (a)\le \phi (b),
\quad \forall a,b \in \tM.\end{equation}

 $\PrePOMon $, $\PreOMon $,  $\POMon $, $\SPOMon $,  $\OMon $, and $\SOMon $  denote the respective categories
 of partially
pre-ordered, pre-ordered, partially ordered, cancellative
partially ordered, ordered, and cancellative ordered monoids,
whose morphisms are the order-preserving homomorphisms.
\end{defn}
By definition, $\SOMon $ is a full subcategory both of $\OMon$ and
of $\SPOMon $, each of which is a full subcategory of $\POMon $,
which is a full subcategory of $\PrePOMon $.

%

\begin{rem}\label{forg1}
The forgetful functor from  the category $\POMon $ to  the
category $\PrePOMon$ is obtained by viewing any partially ordered
monoid $\tM$ naturally in $\PrePOMon$.
\end{rem}

We also can go in the other direction.

 \begin{rem}\label{indord}
 For the class of partially pre-ordered semigroups, our congruences $\Cong$  also
 satisfy the property that if $a_1 \le a_2$ and $b_i \equiv a_i,$
 then $b_1 \le b_2$. In this case, $\deqvb{\tM}$ inherits the
 partial pre-order given by $[a] \le [b]$ iff $a \le b.$\end{rem}

 \begin{prop}\label{equiv1}
There is a retraction  $\tF:\PrePOMon\to \POMon
 $ to the forgetful functor of Remark~\ref{forg1}. Namely, we take the congruence  $\Cong$ on a  pre-ordered monoid $\tM$ given  by
 $a \equiv b$ when  $a \le b$ and $b \le a$, and
define $\tF(\tM) := \deqvb{\tM}$. \end{prop}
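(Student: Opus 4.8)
The plan is to verify, in turn, that $\tF$ is well defined on objects, that it extends to morphisms so as to be a functor $\PrePOMon\to\POMon$, and that this functor retracts the forgetful functor $U$ of Remark~\ref{forg1}. For the first step, fix a partially pre-ordered monoid $\tM$ and consider the relation $\equiv$ given by $a\equiv b$ iff $a\le b$ and $b\le a$. Reflexivity and symmetry are clear, and transitivity follows from that of $\le$ (from $a\le b$ and $b\le c$ one gets $a\le c$, and symmetrically $c\le a$). It is a congruence for multiplication: if $a_1\equiv a_2$ and $b_1\equiv b_2$, then two applications of \eqref{dist10} give $a_1b_1\le a_2b_1\le a_2b_2$ and, symmetrically, $a_2b_2\le a_1b_1$, so $a_1b_1\equiv a_2b_2$; commutativity of the monoids removes any left/right issue. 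Moreover $\Cong$ satisfies the compatibility of Remark~\ref{indord}: if $a_1\le a_2$, $b_1\equiv a_1$ and $b_2\equiv a_2$, then $b_1\le a_1\le a_2\le b_2$. Hence, by Remark~\ref{indord}, $\deqvb{\tM}$ is a monoid carrying the well-defined pre-order given by $[a]\le[b]$ iff $a\le b$, with respect to which $a\mapsto[a]$ is an order-preserving monoid homomorphism.

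The one step that is more than bookkeeping — though still short — is checking that $\tF(\tM):=\deqvb{\tM}$ actually lands in $\POMon$, i.e. that the inherited pre-order is a partial order. This is precisely where the construction does its job: if $[a]\le[b]$ and $[b]\le[a]$, then, unwinding the definition of the inherited order, $a\le b$ and $b\le a$, which is exactly the statement $a\equiv b$, so $[a]=[b]$. Compatibility of this order with the multiplication of $\deqvb{\tM}$ is inherited from \eqref{dist10} on $\tM$ via the quotient map: if $a\le b$ then $ac\le bc$, hence $[a][c]\le[b][c]$. Thus $\tF$ is well defined on objects.

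For morphisms, given an order-preserving homomorphism $\phi:\tM\to\tM'$ I would set $\tF(\phi)([a]):=[\phi(a)]$. This is well defined because $\phi$ is order-preserving: if $a\equiv b$, i.e. $a\le b$ and $b\le a$, then by \eqref{comm12} $\phi(a)\le\phi(b)$ and $\phi(b)\le\phi(a)$, so $\phi(a)\equiv\phi(b)$. That $\tF(\phi)$ is a monoid homomorphism and is order-preserving is immediate from the corresponding properties of $\phi$ together with the definitions of the operations and the order on the quotients, and $\tF(\id_{\tM})=\id$, $\tF(\psi\circ\phi)=\tF(\psi)\circ\tF(\phi)$ follow by evaluation on classes $[a]$. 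So $\tF:\PrePOMon\to\POMon$ is a functor.

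Finally, for the retraction property I would show $\tF\circ U=\id_{\POMon}$, up to the canonical identification of a partially ordered monoid with its quotient by the diagonal congruence. If $\tM$ is already partially ordered, then antisymmetry of its order means $a\equiv b$ holds iff $a=b$, so $\Cong=\diag(\tM)$ and $a\mapsto[a]$ is an isomorphism $\tM\osr\deqvb{\tM}$ of ordered monoids; under this identification $\tF$ sends each order-preserving homomorphism to itself. Hence $\tF$, restricted along $U$, is the identity on $\POMon$, which is the assertion that $\tF$ retracts the forgetful functor. The main obstacle here is not any single hard argument but simply keeping the three well-definedness checks straight; the conceptual heart is the one-line antisymmetry verification of the second paragraph.
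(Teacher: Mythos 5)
Your proof is correct and follows essentially the same route as the paper's: check that $\equiv$ is a congruence compatible with the pre-order, invoke Remark~\ref{indord} to get the induced order on $\deqvb{\tM}$, verify functoriality on morphisms via \eqref{comm12}, and observe that the construction is trivial (the congruence is the diagonal) on monoids that are already partially ordered. You simply spell out the details — notably the antisymmetry check on the quotient — that the paper compresses into ``it is easy to see.''
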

\begin{proof}  It
is easy to see that $\equiv$ is an equivalence relation that
preserves the  operation and the order, so is an ordered monoid
congruence, and thus induces a partial order on $\deqvb{\tM}$
according to Remark~\ref{indord}.

We claim that any order-preserving homomorphism $\phi: \tM \to
\tM'$ induces an order-preserving homomorphism $\tlphi:
\deqvb{\tM} \ds \to  \deqvb{\tM'}.$ Indeed, if $a \equiv b$, then
$a \le b$ and $b \le a$, implying $\phi(a) \le \phi(b)$ and
$\phi(b)
 \le \phi(a)$, yielding  $\phi(a) \equiv \phi(b)$.

The functor $\tF$ is a retraction to the forgetful functor, since
it acts trivially on any total ordered monoid.
 \end{proof}

\subsection{Pre-ordered \semirings0}

\begin{defn}\label{ordered2} We say that a \semiring0   $R$ is  \textbf{pre-ordered} (resp.~\textbf{partially ordered}, \textbf{ordered})
if it has a  pre-order $\ge$ (resp. ~partial  order,
order) with respect to which  both the monoid $(R,\cdot \,
,\rone)$ and the semigroup $(R,+)$ satisfy Condition
\eqref{dist10} of Definition~\ref{ordered1}. \end{defn}

In other words, both multiplication and addition preserve the
pre-order. There is a delicate issue in this definition. In the
rational numbers, viewed as a multiplicative monoid, we have $1
<2$ but $(-1)1 > (-1)2.$  This difficulty is dealt with in
\cite{IzhakianKnebuschRowen2009Refined}, in which we define the
order in terms of a cone of ``positive''  elements.
Definition~\ref{ordered2} is the special case in which  all
elements of $R$  are positive or 0, and is  reasonable for
tropical mathematics since the ``zero'' element (when it is
included) is minimal.  We use Definition~\ref{ordered2} here
because it is more appropriate to our categorical treatment.

 A \semiring0 $R$   has the \textbf{infinite element} $\infty$
if
\begin{equation}\label{inf} \infty + a = \infty = \infty\cdot a =
a\cdot \infty, \qquad \text{for some } a \in R.
\end{equation}
Recall from \cite[Corollary
2.15]{IzhakianKnebuschRowen2009Refined} that if $R$ has a unique
infinite element, then   $$\infty + a = \infty = \infty\cdot a =
a\cdot \infty, \qquad \forall a <\infty.$$

Nonzero positive elements of an ordered \semiring0 need not be
finite, and we could have several infinite elements (as can be
seen easily by means of ordinals). We do not deal with such issues
in this paper, and assume there is at most one infinite element
$\infty$.

The following observation is implicit in~\cite[Theorem~4.2]{HW}.

 \begin{prop}\label{maxplus} There is a natural functor   $\SeRo \to
\PrePOMon,$ where we define the preorder on a \semiring0 $R$ given
by $$\text{$a \le b$ \dss{iff}  $a=b\quad$ or $\quad b = a+c \quad$
 for some $c \in R.$}$$
\end{prop}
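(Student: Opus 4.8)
The plan is to verify three things in turn: first, that the relation $\le$ defined by the rule ``$a \le b$ iff $a = b$ or $b = a+c$ for some $c \in R$'' is a partial pre-order on each \semiring0 $R$; second, that this pre-order is compatible with multiplication and with addition in the sense of Condition~\eqref{dist10}, so that $R$ becomes an object of $\PrePOMon$ (viewing $R$ as a monoid under addition — note the target category is built from semigroups/monoids, and here the relevant semigroup operation is $+$); and third, that every \semiring0 homomorphism $\vrp: R \to R'$ is automatically order-preserving for these pre-orders, so that $\vrp$ is a morphism in $\PrePOMon$. Functoriality (identities go to identities, composites to composites) is then immediate since the underlying set map and the underlying semigroup-homomorphism structure are unchanged.

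For the first step, reflexivity is built into the definition ($a = a$), and transitivity follows by adding the witnessing elements: if $b = a + c$ and $d = b + c'$, then $d = a + (c + c')$, with the degenerate cases (where one of the relations is an equality) handled trivially. Note we do not claim antisymmetry — the statement only asserts a pre-order, which is why the functor lands in $\PrePOMon$ rather than $\POMon$. For the second step, suppose $b \le c$, say $c = b + d$. Then for any $a \in R$ we have $a + c = a + b + d$, witnessing $a + b \le a + c$; and $ac = a(b+d) = ab + ad$ by distributivity, witnessing $ab \le ac$. Commutativity of both operations (the semigroups and monoids in this paper are commutative) gives the left-handed versions as well. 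The equality cases are again trivial. This is exactly Condition~\eqref{dist10} for both $(R, +)$ and $(R, \cdot\,, \rone)$, so in particular $(R,+)$ is a partially pre-ordered semigroup.

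For the third step, suppose $a \le b$ in $R$, say $b = a + c$. Applying $\vrp$ and using that it preserves addition, $\vrp(b) = \vrp(a) + \vrp(c)$, which exhibits $\vrp(a) \le \vrp(b)$ in $R'$; the case $a = b$ is trivial. Hence $\vrp$ satisfies~\eqref{comm12} and is a morphism in $\PrePOMon$. I do not expect any real obstacle here; the only point requiring a moment's care is the bookkeeping about \emph{which} operation plays the role of the semigroup operation in the target category — the pre-order interacts with both $+$ and $\cdot$, but the functor to $\PrePOMon$ remembers only the additive semigroup structure together with the pre-order, the multiplicative compatibility being extra structure that is simply discarded by the forgetful passage (it will matter later when one refines this to a functor into a category of pre-ordered \semirings0). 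The verification that $\le$ is genuinely the relation making a bipotent \semiring0 into its associated ordered monoid — the motivating special case — needs nothing beyond the above, since in the bipotent case $a + c \in \{a, c\}$ forces the pre-order to coincide with the monoid order used in Proposition~\ref{maxplus1}.
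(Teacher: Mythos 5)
Your three verifications (transitivity of $\le$ via adding the witnesses, compatibility with both operations via distributivity, and preservation under \semiring0 homomorphisms because they preserve addition) are all correct and are exactly what the proposition needs; the paper itself offers no written proof, treating the claim as an observation implicit in Hutchins--Weinert, so this is the natural route. However, there is a genuine error in your description of the functor itself: you assert twice that the object of $\PrePOMon$ assigned to $R$ is $(R,+)$ with the pre-order, and that "the multiplicative compatibility is extra structure discarded by the forgetful passage." This is backwards, and moreover cannot be right as stated, because in a \semiring0 the additive structure $(R,+)$ is only an Abelian \emph{semigroup} --- there is no zero element, hence no additive unit --- so $(R,+)$ is not a monoid and cannot be an object of $\PrePOMon$ at all.

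The functor must send $R$ to its \emph{multiplicative} monoid $(R,\cdot\,,\rone)$ equipped with the pre-order defined through addition. This is forced by the paper's conventions: Definition~\ref{ordered1} orders monoids written multiplicatively, Definition~\ref{ordered2} separately requires compatibility for the monoid $(R,\cdot\,,\rone)$ and the semigroup $(R,+)$, and the companion result Proposition~\ref{maxplus1} recovers a bipotent \semiring0 from an ordered \emph{multiplicative} monoid by setting $a+b=\max\{a,b\}$; the present proposition is the first half of that correspondence. The repair is immediate from what you have already written: your computation $ac=a(b+d)=ab+ad$ is precisely Condition~\eqref{dist10} for $(R,\cdot\,,\rone)$, which is the condition actually required for membership in $\PrePOMon$, while the additive compatibility $a+c=(a+b)+d$ is the bonus structure (needed later for Definition~\ref{ordered2}, not for this proposition). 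With the roles of the two operations swapped in your closing remarks, the proof is complete.
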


 This functor always yields the trivial partial
preorder on rings, since then $b =a + (b-a).$ The situation is
quite different for the \semirings0 arising in tropical
mathematics, because of bipotence.

\begin{prop}\label{maxplus20} Suppose $R$ is an idempotent
\semiring0.
\begin{enumerate}\eroman
\item $a\le b$ iff $a + b = b.$

\item   $\le$ is a  partial order, which is total when $R$ is bipotent.
\end{enumerate}\end{prop}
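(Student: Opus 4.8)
The plan is to establish both parts directly from the definition of the preorder in Proposition~\ref{maxplus} together with the idempotence hypothesis. For part~(i), I would first prove the forward implication: if $a \le b$, then either $a = b$ or $b = a + c$ for some $c \in R$. In the first case $a + b = b + b = b$ by idempotence. In the second case, $a + b = a + (a+c) = (a+a) + c = a + c = b$, again using idempotence (and associativity/commutativity of $+$). Conversely, if $a + b = b$, then taking $c = b$ in the definition gives $b = a + c$, hence $a \le b$. This is the routine direction and should be only a few lines.

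For part~(ii), I would verify that $\le$ is reflexive, transitive, and antisymmetric. Reflexivity is immediate since $a = a$ forces $a \le a$. Transitivity I would deduce from part~(i): if $a \le b$ and $b \le c$ then $a + b = b$ and $b + c = c$, so $a + c = a + (b+c) = (a+b) + c = b + c = c$, whence $a \le c$. (Alternatively, transitivity is free because the preorder of Proposition~\ref{maxplus} is already known to be a partial pre-order, i.e.\ transitive, via the functor to $\PrePOMon$.) For antisymmetry, suppose $a \le b$ and $b \le a$; by part~(i), $a + b = b$ and $b + a = a$, and since $+$ is commutative, $a = b + a = a + b = b$. Thus $\le$ is a partial order.

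Finally, for the totality claim when $R$ is bipotent: given any $a, b \in R$, bipotence gives $a + b \in \{a, b\}$. If $a + b = b$ then $a \le b$ by part~(i); if $a + b = a$ then $b + a = a$ by commutativity, so $b \le a$ by part~(i). Either way $a$ and $b$ are comparable, so the partial order is total.

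I do not anticipate a serious obstacle here; the only point requiring a little care is to make sure part~(i) is established first and then used as the working characterization throughout part~(ii), since reasoning directly from the ``$b = a + c$'' form is clumsier than reasoning from ``$a + b = b$.'' The one genuine subtlety worth flagging is that the preorder in Proposition~\ref{maxplus} is defined with the disjunct ``$a = b$ or $b = a + c$''; one should check that in the idempotent case the disjunct ``$a = b$'' is subsumed (take $c = a$, since $a + a = a$), which is what makes the clean characterization in part~(i) hold without a case split in the converse direction.
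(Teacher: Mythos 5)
Your proof is correct and follows essentially the same route as the paper: establish the characterization $a\le b \iff a+b=b$ first, then derive transitivity and antisymmetry from it by the identical computations. The only differences are that you explicitly handle the ``$a=b$'' disjunct and spell out the totality argument from bipotence, both of which the paper leaves implicit.
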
\begin{proof} (i):
$(\Leftarrow)$ Take $c=b.$

$(\Rightarrow)$  Suppose $a + c = b.$ Then
$$ a+b = a + (a+c) = (a+a)+c = a+c = b.$$

 (ii):
Transitivity follows because $a+b =b$ and $b+c = c$ imply
$$a+c = a+(b +c) = (a+ b)+c = b+c = c.$$

 It remains to prove
 antisymmetry. Suppose $a\le b$ and $b\le a.$ Then, in view of
 (i),
 $b = a+b = a$.
\end{proof}

We are ready for a key identification of categories.

\begin{prop}\label{maxplus1} There is
 a faithful functor $\tF_{\OMon}: \OMon \to \SeRo$, whose image  is $\Bipot$. \end{prop}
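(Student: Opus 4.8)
The plan is to define the functor $\tF_{\OMon}$ on objects by sending an ordered monoid $\tM = (\tM, \cdot\,, \one_\tM, \le)$ to the triple $(\tM, +, \cdot\,, \one_\tM)$ where the new addition is given by $a + b := \max\{a,b\}$, which is well-defined since the order is total. First I would verify that this really is a \semiring0: associativity and commutativity of $\max$ are immediate from the order axioms; $(\tM,\cdot\,,\one_\tM)$ is already a monoid by hypothesis; and distributivity $a\max\{b,c\} = \max\{ab,ac\}$ is exactly Condition~\eqref{dist1}, which follows from \eqref{dist10} applied to a total order (given $b \le c$ one has $ab \le ac$, so $\max\{ab,ac\} = ac = a\max\{b,c\}$, and symmetrically). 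This produces a \semiring0 which is moreover bipotent since $\max\{a,b\} \in \{a,b\}$ always, so the image lands in $\Bipot$ by Lemma~\ref{catid1}.

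Next I would define $\tF_{\OMon}$ on morphisms: an order-preserving monoid homomorphism $\phi: \tM \to \tM'$ is sent to the same map $\phi$, now regarded as a map of the associated \semirings0. The only thing to check is that $\phi$ preserves the new addition, i.e.\ $\phi(\max\{a,b\}) = \max\{\phi(a),\phi(b)\}$; this holds because $\phi$ is order-preserving and the order is total (if $a \le b$ then $\phi(a) \le \phi(b)$, so $\phi(\max\{a,b\}) = \phi(b) = \max\{\phi(a),\phi(b)\}$). Functoriality (preservation of identities and composition) is then immediate since $\tF_{\OMon}$ acts as the identity on underlying sets and maps. Faithfulness is likewise immediate: if $\tF_{\OMon}(\phi) = \tF_{\OMon}(\psi)$ then $\phi$ and $\psi$ agree as set maps, hence as monoid homomorphisms.

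It remains to identify the image as all of $\Bipot$, i.e.\ to show $\tF_{\OMon}$ is essentially surjective onto $\Bipot$ (indeed surjective on objects). Given a bipotent \semiring0 $R$, I would invoke Proposition~\ref{maxplus20}: the preorder from Proposition~\ref{maxplus}, namely $a \le b$ iff $a + b = b$, is a total order on $R$ since $R$ is bipotent, and the multiplicative monoid $(R,\cdot\,,\rone)$ satisfies \eqref{dist10} with respect to it because if $b \le c$, i.e.\ $b+c = c$, then $ab + ac = a(b+c) = ac$, so $ab \le ac$. Thus $(R,\cdot\,,\rone,\le)$ is an object of $\OMon$, and one checks that $\tF_{\OMon}$ applied to it recovers $R$: the associated addition sends $(a,b)$ to $\max\{a,b\}$ in this order, which equals $a+b$ precisely because $a + b \in \{a,b\}$ and $a+b$ dominates both $a$ and $b$ (since $a + (a+b) = (a+a) + b = a + b$ using idempotence, and symmetrically). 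So every bipotent \semiring0 is in the image, and every morphism in $\Bipot$ is a \semiring0 homomorphism hence in particular preserves $\max$ and the multiplicative structure, so pulls back to an order-preserving monoid homomorphism; this shows $\tF_{\OMon}$ surjects onto $\Bipot$ as a functor.

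The main obstacle, such as it is, is bookkeeping rather than conceptual: one must be careful that the two passages (order $\rightsquigarrow$ addition via $\max$, and addition $\rightsquigarrow$ order via $a+b=b$) are mutually inverse, which rests on the bipotence/idempotence identity $a + b \in \{a,b\}$ together with $a + (a+b) = a+b$. The subtle point worth stating explicitly is why the image is exactly $\Bipot$ and not something smaller --- this needs Proposition~\ref{maxplus20}(ii) to guarantee the recovered order is \emph{total}, so that the recovered structure is a genuine object of $\OMon$ (a \emph{totally} ordered monoid) rather than merely partially ordered. Everything else --- distributivity from \eqref{dist1}, faithfulness, functoriality --- is routine verification.
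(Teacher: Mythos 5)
Your proposal is correct and follows essentially the same route as the paper: define addition as $\max$ on a totally ordered monoid, verify distributivity via \eqref{dist1}, and recover the order on a bipotent \semiring0 from Proposition~\ref{maxplus20} to see the image is exactly $\Bipot$, checking in both directions that the morphisms match. Your extra care about the two passages being mutually inverse and the explicit faithfulness check are fine but add nothing beyond the paper's argument.
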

\begin{proof}
Given any  totally ordered monoid $(\tM, \cdot \, , \ge, \mone )$
we define $a+b$ to be $\max \{ a, b\}.$ Then $(\tM,+)$ is a
semigroup, since
$$(a+b)+c = \max \{ a, b , c\} = a+(b+c).$$
Finally, this gives rise to a \semiring0, since, by
Equation~\eqref{dist1}, $$(a+b)c = \max \{a,b\}c = \max \{ ac, bc
\} = ab + ac.$$ Any order-preserving monoid homomorphism $\vrp:
\tM \to \tM'$ is a \semiring0 homomorphism, since for $a\le b$ we
have
$$\vrp(a+b) = \vrp(b)=\vrp(a) + \vrp(b).$$

 Conversely, given a bipotent
\semiring0 $R$,  the relation $\le$ of Proposition~\ref{maxplus}
is a total order by Proposition~\ref{maxplus20}. Furthermore,
$\le$ is preserved under multiplication, since $b \le c$ implies
$b+c =c$ and thus $ab + ac = a(b+c) = ac,$ yielding $ab \le ac.$
Any \semiring0 homomorphism $\vrp: R \to R'$ is an
order-preserving monoid homomorphism, for if $a \le b$, then
$$\vrp(a) + \vrp(b) = \vrp(a+b) = \vrp(b),$$ implying
$\vrp(a) \le \vrp(b).$
\end{proof}
 Note that we have just reconstructed the max-plus algebra.
 We will rephrase this result in the layered
setting, as Theorem~\ref{fun1}.

Proposition~\ref{maxplus1} enables one to pass back and forth
between categories of totally ordered monoids and bipotent
semirings.
 The first category enables us to exploit techniques from
 valuation theory, whereas the second enables us to introduce
 concepts from ring theory such as polynomials, modules, matrices, and
 homology theory.

\begin{prop}\label{rmk:duality} There is a functor $\Bipot\to
\Bipot$ sending a \semiring0 to its \textbf{dual} bipotent
\semiring0 obtained as the same multiplicative monoid, but
reversing the bipotence in addition; i.e., if originally $a+b =
a$, now we put $a+b = b.$ \end{prop}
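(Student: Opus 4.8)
The plan is to exploit the identification of $\Bipot$ with totally ordered monoids supplied by Proposition~\ref{maxplus1}, together with Propositions~\ref{maxplus} and~\ref{maxplus20}: a bipotent \semiring0 $R$ is nothing but a totally ordered monoid in which addition is ``take the maximum.'' Concretely, recall from Proposition~\ref{maxplus20}(ii) that $a \le b$ iff $a+b=b$ defines a total order on $R$, under which (using commutativity of $+$) the sum $a+b$ is the $\le$-larger of $a$ and $b$. I would then define $R^{\operatorname{op}}$ to be the set $R$ with the same multiplication and the same unit $\rone$, but with addition $a \oplus b$ declared to be the $\le$-\emph{smaller} of $a$ and $b$; this is precisely the rule ``if originally $a+b=a$, now put $a \oplus b = b$'' of the statement.

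The first step is to verify that $R^{\operatorname{op}}$ is again a bipotent \semiring0. Commutativity and associativity of $\oplus$ are immediate, since $\min$ with respect to a total order is commutative and associative; bipotence $a\oplus b \in \{a,b\}$ is built into the definition; and $(R^{\operatorname{op}},\cdot\,,\rone)$ is literally the original monoid. The only computation needed is distributivity: given $a,b,c$, by totality we may assume $a\le b$, so $a\oplus b = a$ and $(a\oplus b)c = ac$; since multiplication preserves $\le$ (exactly as shown in the proof of Proposition~\ref{maxplus1}, $a\le b$ implies $ac\le bc$), we get $ac\le bc$, hence $ac\oplus bc = ac$ as well, so $(a\oplus b)c = ac\oplus bc$. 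Thus $R^{\operatorname{op}}\in\Bipot$.

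The second step is to define the functor on morphisms as the identity on underlying maps. If $\vrp\colon R\to R'$ is a \semiring0 homomorphism, then by Proposition~\ref{maxplus1} it is order-preserving for $\le$ and $\le'$; hence for $a\le b$ in $R$ we have $\vrp(a)\le'\vrp(b)$, giving $\vrp(a\oplus b)=\vrp(a)=\vrp(a)\oplus'\vrp(b)$, and symmetrically if $b\le a$, while $\vrp$ still respects the unchanged multiplication and unit. So $\vrp$ is a homomorphism $R^{\operatorname{op}}\to (R')^{\operatorname{op}}$, and I set the functor to send $\vrp$ to this same set map. Preservation of identities and of composition is then automatic, since the underlying maps are unchanged.

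I do not anticipate a genuine obstacle; the one point to watch is the distributivity check in the first step, i.e.\ the (easy) observation that monotonicity of multiplication in an ordered \semiring0 is symmetric under reversing the order. As a bonus, one sees immediately that $(R^{\operatorname{op}})^{\operatorname{op}}=R$, so the constructed functor is an involution of $\Bipot$ — in particular an isomorphism of categories — though the statement only asserts that it is a functor.
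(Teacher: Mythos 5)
Your proposal is correct and takes essentially the same route as the paper: both identify a bipotent \semiring0 with a totally ordered monoid via the order $a\le b$ iff $a+b=b$ (Propositions~\ref{maxplus} and~\ref{maxplus20}), pass to the reverse order, and note that \semiring0 homomorphisms are order-preserving and hence also respect the reversed addition. The paper compresses this into one line, while you have simply written out the distributivity and morphism checks explicitly.
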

\begin{proof} This is seen readily by defining the pre-order given
by $a \ge  b$ iff $a+b = a;$ then the dual bipotent \semiring0
corresponds to the reverse pre-order, and any homomorphism
preserves the (reverse) order.
\end{proof}

For example, the dual \semiring0\ of $(\Real , \max, + , 0)$ is
$(\Real , \min, + , 0)$. (The number $0$ is really the unit
element~$\one_\Real$.)

 \subsection{The universal for the Frobenius property}

 Usually one works with commutative, totally ordered monoids and
\semirings0. In this case,
 recall  the well-known \textbf{Frobenius property}, cf.~ \cite[Remark~1.1]{IzhakianRowen2007SuperTropical}:
\begin{equation}\label{eq:Frobenius} (a+b)^m = a^m + b^m
\end{equation}
for any
    $m \in \Net$.

 These  are identities of $R$, so we could define
    the \textbf{Frobenius monoid \semiring0} $\httM$ of an arbitrary monoid $\tM$, in which, in view of
    Remark~\ref{Pass00}, we
    impose on $[\tM]$ (defined in Definition~\ref{monoidsem}) the relations
    $$ \bigg( \sum _{a\in S} a\bigg)^m +c  = \bigg(\sum_{a\in S} a^m\bigg) +c , \qquad     \bigg( \sum _{a\in S} a\bigg)^m c  =   \sum_{a\in S} a^mc,$$   for  $S \subset \tM$ finite.

Note that when $R$ is divisible, these relations are formal
consequences of \eqref{eq:Frobenius} since writing $c = d^m$ we
have $$d^m + \bigg( \sum _{a\in S} a\bigg)^m   = \bigg(d+
\sum_{a\in S} a\bigg)^m = d^m +   \sum _{a\in S} a^m  ; \qquad d^m
\bigg( \sum _{a\in S} a\bigg)^m   = \bigg(d  \sum_{a\in S}
a\bigg)^m = d^m \sum _{a\in S} a^m.$$

The Frobenius monoid \semiring0 $\httM$  satisfies the following
universal property:

 \begin{prop}\label{FrobUn} Suppose $\vrp: \tM \to \tM'$ is a
    monoid homomorphism,  where  the  monoid $\tM' $ is
    totally ordered. Viewing $\tM' $ as a bipotent \semiring0 via
    Proposition~\ref{maxplus1}, we have a
    natural homomorphism $$\htvrp: \httM \to \tM'$$ given by $$\htvrp
     \bigg(\sum _{a \in S} a \bigg) = \sum_{a \in S}  \vrp (a),$$ satisfying the universal property that    $\vrp$
    factors as
\begin{equation*}
    \xymatrix{
    \vrp : \tM    \ar[r] & \httM  \ar[r]^{\htvrp} & \tM' . \\
}
\end{equation*}
\end{prop}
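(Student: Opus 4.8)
The plan is to establish the proposition in three stages: first construct the map $\htvrp$ and check it is a well-defined \semiring0 homomorphism; second verify the factorization $\vrp : \tM \to \httM \xrightarrow{\htvrp} \tM'$; and third confirm the universal property, namely that $\htvrp$ is the unique homomorphism making the triangle commute. The underlying point is that $\httM$ is, by construction, the universal target among totally ordered (equivalently, bipotent \semiring0) recipients of a monoid homomorphism out of $\tM$, so the argument is essentially bookkeeping about how the defining relations of $\httM$ interact with the bipotent \semiring0 structure on $\tM'$.

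First I would recall that, by Definition~\ref{monoidsem}, $[\tM]$ consists of formal sums $\sum_{a\in S} a$ over finite $S\subset\tM$, with addition given by union of index sets, and $\httM$ is the further quotient of $[\tM]$ by the Frobenius relations. The natural map $\tM\to [\tM]$ sends $a$ to the one-term sum $a$; composing with the quotient $[\tM]\to\httM$ gives the first arrow. To define $\htvrp:\httM\to\tM'$, start from the map $[\tM]\to\tM'$ sending $\sum_{a\in S} a \mapsto \sum_{a\in S}\vrp(a) = \max_{a\in S}\vrp(a)$, the maximum taken in the totally ordered monoid $\tM'$ viewed as a bipotent \semiring0 via Proposition~\ref{maxplus1}. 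This respects addition since $\left(\sum_{a\in S} a\right) + \left(\sum_{a\in S'} a\right) = \sum_{a\in S\cup S'} a$ maps to $\max_{a\in S\cup S'}\vrp(a) = \left(\max_{a\in S}\vrp(a)\right)+\left(\max_{a\in S'}\vrp(a)\right)$; it respects multiplication because multiplication in $[\tM]$ is the distributive extension of that in $\tM$, and in the bipotent \semiring0 $\tM'$ we have $\left(\max_{a}\vrp(a)\right)\left(\max_{b}\vrp(b)\right) = \max_{a,b}\vrp(a)\vrp(b) = \max_{a,b}\vrp(ab)$ by Equation~\eqref{dist1}; and it sends $\one_\tM\mapsto\vrp(\one_\tM)=\one_{\tM'}$. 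Then I would check that this map kills the Frobenius relations, so that it descends to $\httM$: since $\tM'$ is bipotent it is Frobenius, i.e. $(a+b)^m = a^m+b^m$ holds identically (this is Remark~1.1 of \cite{IzhakianRowen2007SuperTropical}, or can be checked directly: in a totally ordered monoid $\max\{a,b\}^m = \max\{a^m,b^m\}$), hence $\left(\sum_{a\in S}\vrp(a)\right)^m = \sum_{a\in S}\vrp(a)^m = \sum_{a\in S}\vrp(a^m)$, and likewise after multiplying by any $c'\in\tM'$; so the images of $\left(\sum_{a\in S}a\right)^m + c$ and $\left(\sum_{a\in S}a^m\right)+c$ agree, and similarly for the multiplicative relation. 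Thus $\htvrp$ is well defined on $\httM$ and is a \semiring0 homomorphism.

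Next, the factorization is immediate: the composite $\tM\to\httM\xrightarrow{\htvrp}\tM'$ sends $a$ to the class of the one-term sum $a$, then to $\vrp(a)$, so it equals $\vrp$. Finally, for the universal property in the form stated in \S\ref{sem0} (following Remark~\ref{Pass00}), I would argue uniqueness: any \semiring0 homomorphism $\psi:\httM\to\tM'$ with $\psi\circ\iota = \vrp$ (where $\iota:\tM\to\httM$ is the structural map) must satisfy $\psi\big(\sum_{a\in S}a\big) = \psi\big(\sum_{a\in S}\iota(a)\big) = \sum_{a\in S}\psi(\iota(a)) = \sum_{a\in S}\vrp(a)$, since $\psi$ preserves addition and the class of $\sum_{a\in S}a$ in $\httM$ is the sum of the classes $\iota(a)$; hence $\psi = \htvrp$. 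I expect the only mildly delicate step to be the verification that the Frobenius relations are honored by the map into $\tM'$ — but this reduces, via Proposition~\ref{maxplus1} and Equation~\eqref{dist1}, to the elementary identity $\max\{x_1,\dots,x_k\}^m = \max\{x_1^m,\dots,x_k^m\}$ in an ordered monoid, so there is no real obstacle; the rest is routine diagram-chasing with the description of $\httM$ from Definition~\ref{monoidsem}.
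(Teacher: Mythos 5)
Your proposal is correct and follows the same route as the paper: define $\htvrp$ on formal sums by $\sum_{a\in S}a\mapsto\sum_{a\in S}\vrp(a)$ and observe that it descends to $\httM$ because $\tM'$, being bipotent, satisfies the Frobenius property. The paper states this in one line; you simply spell out the well-definedness, the factorization, and the (correct, though not demanded) uniqueness argument in full detail.
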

\begin{proof} The map   given by  $ \sum _{a \in S} a  \mapsto \sum_{a \in S}  \vrp (a)$
 is  the desired \semiring0 homomorphism, since $\tM'$
satisfies the Frobenius property.
 \end{proof}

On the other hand, the same argument shows that $\httM$  itself is
not ordered as a monoid, since one can provide any ordered monoid
$\tM$ with two orders, one order making $a+b = a$ and the reverse  order
making $a+b =b,$ as shown in Remark~\ref{rmk:duality}. The point
here is that the Frobenius property, being an algebra identity,
permits the definition of a universal. Furthermore, one could
define the category of \semirings0 satisfying the Frobenius
property, which comprises a full subcategory of $\SeRo.$ Although
Proposition~\ref{FrobUn} indicates that this is the ``correct''
category in which to conduct much of the investigation in tropical
algebraic geometry, we forego further consideration of this
category in this paper.

\section{Integral domains and monoids with
valuation}\label{MonVal}

We turn to the main notion of ``tropicalization.'' As indicated in
the introduction, we need to consider integral domains $W$ with
valuation $v: W\setminus \{ \zero_W \} \to \tG, $ having cancellative (ordered) value monoid $\tG$. In
valuation theory it is customary to write the operation of the
value monoid $\tG$ as addition, and to utilize the axiom
\begin{equation*} v(a+b) \ge \min \{ v(a), v(b) \}.\end{equation*}
 Note that
we can replace the valuation $v$ by $v' :=-v$ to get the dual
equation
\begin{equation}\label{max} v'(a+b) \le \max \{ v'(a), v'(b) \}.\end{equation}

We adjust the notation of valuation theory to fit in with the
algebraic language of \semirings0. Thus, from now on in this
paper, we use multiplicative notation, written $\tG := (\tG, \cdot
\, , \geq, \one_\tG)$, for the value monoid~$\tG$ with unit
element $\one_\tG$, which can be viewed as a \semiring0 via
Proposition~\ref{catid1}, and use~\eqref{max} for the valuation
axiom, since it fits in better with the \semiring0 approach. (But
  several authors, such as
 Sturmfels and his school, have used the min-plus algebra instead,
 in order to forego taking the negative.)


\begin{defn}\label{eq:PowerSeries0} The algebra of
\emph{Puiseux series} $\mathbb K$ over an algebraically closed
field $K$
  is the field  of series of the
form
\begin{equation}\label{eq:PowerSeries}
 p :=
\sum_{\tau \in T} c_{\tau} t ^{\tau},\qquad c_\tau \in K,
\end{equation} with $T \subset \Real $  well-ordered  (from below).
\end{defn}

Sometimes one takes $T \subset \Rati;$ any totally ordered field
will do. We will take $\Real $ in this paper.  For any field $F$, we write  $\Fld ^\times $ for  $\Fld  \setminus \{ \zero\}.$ The tropical
connection is that the max-plus algebra appears as the target of
the valuation $$ \nVal : \Fld ^\times \To
 \Real   \ $$ given by sending $p(t)\ne
      \zero_{\Fld}$ to the negative of the lowest  exponent of its  monomials having nonzero coefficient;
 \begin{equation}\label{eq:valPowerSeries}
  \nVal(p)\  :=
    - \min \{\tau \in T \ : \; c_{\tau} \neq \zero_K \}
     .
\end{equation}

\begin{rem}\label{terminology0} There is a natural multiplicative map $\pi: \mathbb K^\times \to
K^\times$, sending a Puiseux series $
 p=
\sum_{\tau \in T} c_{\tau} t ^{\tau}$ to $c_{\nVal(p)}$.

This gives an extra important piece of information, since for any
two Puiseux series $p,q$ we must have $ \nVal (p+q) = \max \{
\nVal (p),  \nVal(q) \}$ unless $ \nVal(p) =  \nVal(q)$ and
$\pi(p) = \pi (q)$, in which case $ \nVal (p+q) <  \max \{ \nVal
(p),  \nVal(q) \}$. In this way, $\pi$ measures how much bipotence
is lost with respect to  $\nVal.$
\end{rem}

\subsection{Valued monoids}\label{ssec:Mono}

We view the previous observations in a somewhat more general
setting.

\begin{defn}\label{def:valuedMonoid} A monoid $\tM = (\tM,\cdot \,, \mone)$
is \textbf{m-valued} with respect to a totally ordered
monoid~$\tG: = (\tG,\cdot \, , \geq, \one_\tG)$  if there is an
onto monoid homomorphism $v : \tM \to \tG$.
This set-up is    notated as the \textbf{triple}~$(\tM,\tG,v)$.
\end{defn}

 \begin{note} The hypothesis that $v$ is onto can always be
attained by replacing $\tG$ by $v(\tM)$ if necessary. \end{note}
Given a field
 with valuation, or more generally, an integral domain $W$ with valuation
 $v: W\setminus \{ \zero_W \} \To \tG ,$
  we take  $\tM =  \setminus \{ \zero_\tM \} $, a~cancellative submonoid of
$W$,
 to obtain
the 
triple $(\tM, \tG, v)$ as in Definition \ref{def:valuedMonoid}.
When $W$ is an arbitrary commutative ring with valuation, we must
assume that the monoid $\tG$ is pointed, and take the triple $(W,
\tG, v).$

\begin{example}\label{Pass111} Another major example of an m-valued monoid is $\tM =
(\mathbb C ^ \times , \cdot \, , 1 )   $, $\tG = (\Real_{\geq
0},+, \geq, 0)$, and $v : \mathbb C ^ \times  \to \Real_{\geq 0}$,
given by $v: z \mapsto \log _t (|z|),$ where $t$ is a given
positive parameter. This leads us to the theory of complex
amoebas, cf.~Passare \cite{Pass}.\end{example}

The category of m-valued monoids is quite intricate, since the
morphisms should include all maps which ``transmit'' one
m-valuation to another, as defined in \cite{KZ1}. In order to
simplify this aspect of the theory, we restrict ourselves to a
subcategory, but consider the general version in \cite{IKR6}.

 \begin{defn}
 $\Valmon$ is the category of m-valued monoids whose objects are triples $(\tM,
\tG, v)$ as in Definition~\ref{def:valuedMonoid}, for which a
morphism \begin{equation}\label{eq:valMonMor} \phi : (\tM, \tG, v)
\To (\tM', \tG', v')\end{equation} is comprised of a pair
$(\phi_\tM, \phi_\tG)$ of a monoid homomorphism $\phi_\tM: \tM \to
\tM'$, as well as an order-preserving monoid homomorphism
$\phi_\tG: \tG \to \tG'$, satisfying the compatibility condition
\begin{equation}\label{comm1} v'(\phi_\tM (a)) = \phi_\tG (v(a)),
\quad \forall a \in \tM.
\end{equation}

$\SValmon$ is the full subcategory of  $\Valmon$ in which the
target monoid $\tG$ is cancellative.
 \end{defn}

Thus, we have the  categories $\Valfld$ (resp.~$\Valdom$) whose
objects are fields (resp.~ integral domains) with
 valuations to cancellative monoids, and whose morphisms are ring
homomorphisms which restrict to morphisms in $ \SValmon$,%
and each has its respectful forgetful functor to $\SValmon$.


\begin{rem}\label{6.2} If
$(\tM,\tG,v)$ is a triple, then $v$ induces a pre-order $\le $ on
$\tM,$ given by $a \le b$ iff $v(a) \le v(b)$ in $\tG.$\end{rem}

\begin{lem} There is a fully  faithful   functor of categories $\Valmon \to
\PreOMon.$\end{lem}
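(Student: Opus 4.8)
The plan is to define the functor explicitly on objects and morphisms, then check functoriality, faithfulness, and fullness. On objects, I would send a triple $(\tM, \tG, v)$ in $\Valmon$ to the monoid $\tM$ equipped with the pre-order of Remark~\ref{6.2}, namely $a \le b$ iff $v(a) \le v(b)$ in $\tG$. Since $\tG$ is totally ordered, every pair of elements of $\tM$ is comparable under this pre-order, so $\tM$ is genuinely \emph{pre-ordered} (not merely partially pre-ordered); and the pre-order respects multiplication because $v$ is a monoid homomorphism and the order on $\tG$ satisfies \eqref{dist10}: if $v(b) \le v(c)$ then $v(ab) = v(a)v(b) \le v(a)v(c) = v(ac)$. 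Hence the image indeed lies in $\PreOMon$. On a morphism $\phi = (\phi_\tM, \phi_\tG) : (\tM,\tG,v) \to (\tM',\tG',v')$ I would send it to $\phi_\tM : \tM \to \tM'$; this is a monoid homomorphism, and it is order-preserving for the induced pre-orders because if $a \le b$, i.e.\ $v(a) \le v(b)$, then $v'(\phi_\tM(a)) = \phi_\tG(v(a)) \le \phi_\tG(v(b)) = v'(\phi_\tM(b))$ using the compatibility condition \eqref{comm1} and the fact that $\phi_\tG$ is order-preserving. Functoriality (preservation of identities and composition) is immediate since the assignment is just $\phi \mapsto \phi_\tM$ and composition of morphisms in $\Valmon$ composes the $\tM$-components.

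Faithfulness is the claim that the assignment $\phi \mapsto \phi_\tM$ is injective on each hom-set. Here is where I would expect the only real subtlety: a priori a morphism in $\Valmon$ carries two pieces of data, $\phi_\tM$ and $\phi_\tG$, so two distinct morphisms could have the same $\tM$-component but different $\tG$-components. The point is that the compatibility condition \eqref{comm1}, $v'(\phi_\tM(a)) = \phi_\tG(v(a))$ for all $a \in \tM$, together with the surjectivity of $v : \tM \to \tG$ (which is part of the definition of a triple), forces $\phi_\tG$ to be uniquely determined by $\phi_\tM$: every element of $\tG$ is $v(a)$ for some $a \in \tM$, and its image under $\phi_\tG$ must be $v'(\phi_\tM(a))$. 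Thus $\phi_\tM = \psi_\tM$ implies $\phi_\tG = \psi_\tG$, giving faithfulness. This is the step that genuinely uses the "onto" hypothesis built into Definition~\ref{def:valuedMonoid}, and it is the crux of the lemma.

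For fullness, I would start with an order-preserving monoid homomorphism $g : \tM \to \tM'$ between the underlying pre-ordered monoids of two triples $(\tM,\tG,v)$ and $(\tM',\tG',v')$, and construct a morphism $(\phi_\tM, \phi_\tG) = (g, \phi_\tG)$ in $\Valmon$ lifting it. The only candidate for $\phi_\tG$ is the one forced by \eqref{comm1}: given $\gamma \in \tG$, pick $a \in \tM$ with $v(a) = \gamma$ (possible by surjectivity) and set $\phi_\tG(\gamma) := v'(g(a))$. I would check this is well-defined: if $v(a) = v(b)$ then $a \le b$ and $b \le a$ in the induced pre-order on $\tM$, so $g(a) \le g(b)$ and $g(b) \le g(a)$ in $\tM'$, i.e.\ $v'(g(a)) \le v'(g(b))$ and vice versa, hence $v'(g(a)) = v'(g(b))$ since $\tG'$ is totally ordered (antisymmetry of a total order). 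Then $\phi_\tG$ is a monoid homomorphism because $v$, $g$, $v'$ all are and $v$ is surjective, and it is order-preserving by a similar argument (if $\gamma_1 \le \gamma_2$, lift to $a_1 \le a_2$, apply $g$ and $v'$). The compatibility \eqref{comm1} holds by construction. Hence $(g, \phi_\tG)$ is a morphism in $\Valmon$ mapping to $g$, proving fullness and completing the proof.
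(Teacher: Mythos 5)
Your proposal is correct and uses exactly the functor the paper intends (the induced pre-order of Remark~\ref{6.2} on objects, $\phi\mapsto\phi_\tM$ on morphisms); the paper's own proof simply says ``one sees easily that the morphisms match,'' whereas you supply the actual content, correctly identifying surjectivity of $v$ as what forces $\phi_\tG$ to be determined by $\phi_\tM$ (faithfulness) and what allows $\phi_\tG$ to be reconstructed from an order-preserving $g$ (fullness). The paper additionally builds a triple $(\tM,\deqv{\tM},a\mapsto[a])$ from an arbitrary pre-ordered monoid, but that is an essential-surjectivity statement not needed for full faithfulness, so your omission of it is harmless.
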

\begin{proof} The functor is given by Remark~\ref{6.2}.

Conversely, given any monoid $\tM$ with pre-order, we define $\tG
:= \deqv{\tM}$ as in Proposition~\ref{equiv1}. Then we define $v:
\tM\to \tG$ by $a\mapsto [a];$ clearly $(\tM, \tG, v)$ is a
triple. One sees easily that the morphisms match.\end{proof}

On the other hand, as we have observed, the main idea of
tropicalization is the following observation:

\begin{rem}\label{6.20}
The valuation itself provides a forgetful functor
$\tF_{\operatorname{val}}:\SValmon \to \OMon^+$, where we remember
only the target monoid $\tG$ from the triple $(\tM, \tG, v)$ .
\end{rem}

\begin{rem}\label{terminology} Let us recall some valuation theory,
which we can state in terms of an integral domain~$W$ with
valuation $(W,\tG,v)$.
 The \textbf{valuation
 ring} $R$ (resp.~\textbf{valuation
 ideal} $\mfp$)   is the set of elements of $W$ having value $\ge 0$
 (resp.~ $>0$); the \textbf{residue
domain}
$\overline{W}$ is $R/\mfp.$
\end{rem}
%
%

The residue  domain   is  an integral domain. When $W$ is a field $F$, the residue domain $\bar F$ is also a field.
For example, the valuation ideal $\mfp$ of $\mathbb K$ of Definition~\ref{eq:PowerSeries0} is the set of
Puiseux series having value
 $>0$, and the residue field can be identified with $K$.

 Here is another example, to illustrate some subtler aspects of the
definitions.
\begin{example}\label{ex:K1}$ $
\begin{enumerate}\eroman

\item The integral domain $\C [\la _1, \la _2]$ has the natural
valuation to $\Z \times \Z,$ ordered via the lexicographic order
where $v(\la_1) = (1,0)$ and $v(\la_2) = (0,1).$ On the other
hand, there is the valuation $\tlv: \C [\la _1, \la _2] \to \Z$
given by $\tlv(\sum _{i,j} \a_{i,j}\la _1^i \la_2 ^j) = k$ for
that smallest $k = i+j$ such that $\a_{i,j}\ne 0.$  (In other
words, $\tlv(\la_1) = \tlv(\la _2) = 1.$) The identity map
$(1_{\C[\la _1, \la _2]},1_{ \Z \times \Z})$ (where we replace the
valuation $v$ by $\tlv$) is not a morphism in $ \SValmon$ since it
is not order-preserving. $v(\la_1) = (1,0) > (0,2) = v(\la_2^2)$
whereas $\tlv(\la_1) = 1< 2  = \tlv(\la_2^2).$ \pSkip

\item In (i), we take a different order on   $\Z \times \Z,$ where
two pairs are  ordered first by the sum of their coordinates and
then only secondarily via the lexicographic order. Now the
identity map (where we replace the valuation $v$ by $\tlv$) is a
morphism in $ \SValmon$ since it is order-preserving. Note however
that it is not strictly order-preserving, since $v(\la_1) = (1,0)
> (0,1) = v(\la_2)$ whereas $\tlv(\la_1) = 1 = \tlv(\la_2).$
\end{enumerate}
\end{example}

%
%

%
%

\section{The layered structure}\label{Sec:SupCat}

We are ready to bring our leading player. In this section we
describe the algebraic category in whose context we may formulate
all the algebraic structure we need (including matrices and
polynomials) for the
  layered theory.  To
simplify notation and avoid technical complications, we work with
a \semiring0~$L$ without a zero element, even though information
is lost; the full theory is given in~\cite{IKR6}. Much of the
layered theory stems from the following fundamental construction
from \cite{IzhakianKnebuschRowen2009Refined}, which is inspired by
\cite{AGG}.

\begin{construction}\label{defn50}  $R := \scrR(L,\tG)$ is   defined
 set-theoretically as $L \times \tG $, where
 we denote the element $(\ell,a)$ as $\xl{a}{\ell}$  and, for
$ k,\ell\in L,$ $a,b\in\tG,$ we define multiplication
componentwise, i.e.,
\begin{equation}\label{13}   \xl{a}{k} \cdot \xl{b}{\ell} =
\xl{ab}{k\ell}.
\end{equation}

Addition is given by the usual rules:

\begin{equation}\label{14}
 \xl{a}{k} + \xl{b}{\ell}=\begin{cases}  \xl{a}{k}& \quad\text{if }\ a >
 b,\\ \xl{b}{\ell}& \quad\text{if }\ a <  b,\\
 \xl{a}{k+\ell}& \quad\text{if }\ a= b.\end{cases}\end{equation}

 We define
  $R_\ell : = \{ \ell \} \times \tG  $, for each $\ell \in L.$
 Namely $R = \dot \bigcup_{\ell \in L } R_\ell$.
\end{construction}

This is to be our prototype of a layered \bidomain0, and should be
borne in mind throughout the sequel.
 Nevertheless, one
 should also consider the possibility that the monoid $\tG$ is non-cancellative, in which case, as noted in \cite{IzhakianKnebuschRowen2009Refined},
 Construction~\ref{defn50} fails to satisfy distributivity and thus is not a \semiring0. This difficulty can be resolved,
but the ensuing category becomes rather technical, so we defer it
to
 \cite{IKR6}.

\subsection{Layered pre-\domains0}

We axiomatize  Construction~\ref{defn50} in order to place it in
its categorical framework.

\begin{defn}\label{defn10} Suppose  $(\Lz, \ge)$ is a partially pre-ordered
\semiring0. An $\Lz$-\textbf{layered pre-\domain0} $$\R :=\ldR,
\qquad
$$ is a \semiring0 $\R $, together with a partition $\{ R_\ell :
\ell \in L\}$ into disjoint subsets $R_\ell \subset R$, called
\textbf{layers}, such that
\begin{equation}\label{unionp} \R := \dot \bigcup_{\ell\in \Lz}\R
_\ell,\end{equation}  and a family
  of \textbf{sort transition maps}
$$   \nu_{m,\ell}:\R _\ell\to \R _m,\quad
\forall m\ge \ell >0 ,$$  such that $$\nu_{\ell,\ell}=\id_{\R
_\ell}$$ for every $\ell\in \Lz,$ and
$$\nu_{m,\ell}\circ \nu_{\ell,k}=\nu_{m,k} , \qquad  \forall m \geq \ell \geq k, $$ whenever both sides
are defined,  satisfying the  following axioms A1--A4,  and B.

We say that any element $a$ of $\R _k$ has \bfem{layer}~$k$ $(k\in
\Lz)$. We write $a \nucong b$ for $b \in R_\ell$,  whenever
 $\nu_{m,k}(a) = \nu_{m,\ell}( b )$ in $R_m$ for some $m \ge
 k,\ell$. (This notation is used generically: we write  $a \nucong b$
even when the sort transition maps  $\nu_{m,\ell}$ are notated
differently.)

 Similarly, in line with Remark \ref{maxplus}, we write $a \le _\nu b$ if $\nu_{m,k}(a) + \nu_{m,\ell}( b )=  \nu_{m,\ell}( b )$ in $R_m$ for some $m \ge
 k,\ell$.

 The axioms are as follows:

\boxtext{
\begin{enumerate}

\item[A1.] $\rone \in \R _{1}.$ \pSkip

 \item[A2.] If $a\in \R _k$ and  $b\in
\R _\ell,$ then $ab\in  \R_{k \ell} $. \pSkip

\item[A3.] The product in $\R $ is compatible with sort transition
maps: Suppose $a\in \R _k$ and $b\in \R _{\ell},$ with $m\ge k$
and $m'\ge \ell.$ Then

$\nu_{m,k}(a)\cdot\nu_{m',\ell}(b)= \nu_{mm',k\ell}(ab).$

\item[A4.] $\nu_{\ell,k}(a) + \nu_{\ell',k}(a)
 =\nu_{\ell+\ell',k}(a)  $ for all $a \in R_k$ and all $\ell, \ell' \ge k.$ \end{enumerate}}

 \boxtext{
\begin{enumerate}

\item[B.] (Supertropicality) Suppose  $a\in \R _k,$ $b\in \R
_{\ell},$ and $a \nucong b$. Then \\ $a+b \in R_{k+\ell}$ with
$a+b \nucong a$.\\  If moreover $k= \infty,$ then $a+b = a.$
\end{enumerate}}
%

$L$ is called the \textbf{sorting \semiring0} of the $L$-layered
pre-\domain0 $R = \bigcup_{\ell \in L} R_\ell$.



 \end{defn}

For convenience, we assume in the sequel that $L = L_{\ge 0},$
i.e., all nonzero elements of $L$ are positive. Often $L $ is
$\Net$ or $\Net^+$.

\begin{rem}\label{tang1}
 The $L$-layered pre-\domain0 \ $R$ has
the special layer $R_1$, which is a multiplicative monoid, called
the monoid of \textbf{tangible elements}, and acts with the
obvious monoid action (given by multiplication) on each layer
$R_k$ of $R$.
 \end{rem}

Thus, in one sense, $R$ extends its monoid  of tangible elements.
Although  we have given up bipotence, and Axiom B provides us the
slightly weaker notion of \textbf{$\nu$-bipotence}, which says
that $a +b \nucong a$ or $a +b \nucong b$ for all $a,b \in R.$

\begin{defn}
An $\Lz$-\textbf{layered \bidomain0} is a $\nu$-bipotent
$\Lz$-layered  pre-\domain0 which is (multiplicatively)
cancellative. (We use the prefix ``bi'' in this paper to stress
the  $\nu$-bipotence.)

%

 An $L$-layered \bidomain0 $R$ is called an
\textbf{$L$-layered \bisemifield0} if $(R_1,\cdot \, )$ is an
Abelian  group.
\end{defn}

Note that according to this definition, an
 $L$-layered \bisemifield0 need not be a  \semifield0 unless  $L$ itself also is  a
multiplicative group (and thus a \semifield0).
When $R$ is an $L$-layered \bisemifield0, the action of
Remark~\ref{tang1} is simply transitive, in the sense that for any
$a,b \in R_\ell$ there is  a unique element $r \in R_1$ for which
$ar = b.$



\begin{defn}\label{subsemi} We write $R_{>\ell}$ (resp.~ $R_{\ge \ell}$ )
 for $\bigcup _{k > \ell}\,  R_k$ (resp.~$\bigcup _{k \ge \ell}\,  R_k$). \end{defn}

  \begin{defn}\label{ghostsurp} The
  layer $R_1$ of an $L$-layered \predomain0
  $R$ is  called the \textbf{monoid of tangible elements}. \end{defn} We are interested in the case that $R_1$ generates $R$.

\begin{lem}\label{gen1} If $\tM$ is any submonoid of a layered  pre-\domain0  $\R
:=\ldR,$ then the additive sub-semigroup $\overline{\tM}$ of $R$
generated by $\tM$ is also a layered pre-\domain0.\end{lem}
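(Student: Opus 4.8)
The plan is to show that $\overline{\tM}$, the additive sub-semigroup of $R$ generated by the submonoid $\tM$, inherits a layering from $R$ so that it becomes a layered pre-$\domain0$ in its own right. First I would describe $\overline{\tM}$ concretely: since $(R,+)$ is commutative, $\overline{\tM}$ consists exactly of the finite sums $a_1 + \cdots + a_n$ with each $a_i \in \tM$. By $\nu$-bipotence (Axiom B) together with the addition rule of Construction~\ref{defn50}, such a sum collapses: if the $a_i$ are $\nu$-incomparable the sum just picks out the dominant one, and if several are $\nu$-congruent their layers add. So every element of $\overline{\tM}$ already lies in some layer $R_\ell$ of $R$, and I would set $\overline{\tM}_\ell := \overline{\tM} \cap R_\ell$, giving the partition $\overline{\tM} = \dot\bigcup_{\ell \in L} \overline{\tM}_\ell$ required by \eqref{unionp}. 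The sort transition maps on $\overline{\tM}$ are taken to be the restrictions $\nu_{m,\ell}|_{\overline{\tM}_\ell}$; I must check these land inside $\overline{\tM}_m$, which follows from Axiom A4 applied to the $\nu$-congruent summands, since $\nu_{m,\ell}(a) = \nu_{\ell,\ell}(a) + \cdots$ can be realized as a sum of images of elements of $\tM$ under transition maps, and Axiom A3 lets one recognize $\nu_{m,\ell}(a)$ of a product as a product. The identity and composition laws $\nu_{\ell,\ell} = \id$, $\nu_{m,\ell}\circ\nu_{\ell,k} = \nu_{m,k}$ are automatic since they hold in $R$.

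Next I would verify that $\overline{\tM}$ is closed under multiplication and forms a $\semiring0$: it is closed under addition by construction, closed under multiplication because $\tM$ is a monoid (so products of generators are generators) and multiplication distributes over the finite sums, and it contains $\rone$ since $\rone \in \tM \subseteq \overline{\tM}$ (using A1, $\rone \in R_1$). Distributivity and associativity are inherited from $R$. Then I would run through axioms A1--A4 and B for $\overline{\tM}$ with these restricted data. A1: $\rone \in \overline{\tM} \cap R_1 = \overline{\tM}_1$. A2: if $a \in \overline{\tM}_k$ and $b \in \overline{\tM}_\ell$ then $ab \in R_{k\ell}$ by A2 in $R$, and $ab \in \overline{\tM}$ by multiplicative closure, so $ab \in \overline{\tM}_{k\ell}$. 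A3 and A4 are identities among elements and transition maps that already hold in $R$, hence hold a fortiori in the sub-object once we know the transition maps restrict correctly. Axiom B: if $a \in \overline{\tM}_k$, $b \in \overline{\tM}_\ell$, and $a \nucong b$, then $a + b \in R_{k+\ell}$ with $a+b \nucong a$ by B in $R$, and $a+b \in \overline{\tM}$ by additive closure, so $a+b \in \overline{\tM}_{k+\ell}$; the case $k = \infty$ giving $a+b = a$ is likewise inherited.

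The main obstacle I anticipate is the well-definedness of the layering, i.e., confirming that the partition $\{\overline{\tM}_\ell\}$ is genuinely indexed by the \emph{same} sorting $\semiring0$ $L$ and that the restricted sort transition maps $\nu_{m,\ell}$ have image inside $\overline{\tM}_m$ rather than merely inside $R_m$. The subtlety is that $\nu_{m,\ell}(a)$ for $a = a_1 + \cdots + a_n \in \overline{\tM}_\ell$ need not visibly be a sum of elements of $\tM$; one must use the interaction of Axiom A4 (additivity of transition maps over layer indices of $\nu$-congruent elements) with the collapsing behavior of $+$ to exhibit $\nu_{m,\ell}(a)$ as lying in $\overline{\tM}$. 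Once that point is settled, everything else is a routine transfer of axioms from $R$ to its additively generated sub-semigroup, and I would present it briskly. I would also remark that $\overline{\tM}$ need not be a \bidomain0 even if $R$ is, since cancellativity is not obviously preserved, but this is consistent with the statement, which only claims $\overline{\tM}$ is a layered pre-$\domain0$.
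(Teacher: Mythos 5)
Your overall route is the same as the paper's, whose entire proof is the observation that $\overline{\tM}$ is a \semiring0 by distributivity (closure under addition holds by construction, and closure under multiplication follows since $\tM$ is multiplicatively closed), that Axiom A1 holds because $\rone\in\tM$, and that the remaining axioms are inherited a fortiori from $R$. Your verifications of the \semiring0 structure and of Axioms A1, A2 and B are exactly this argument, spelled out.

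Two of your supporting justifications, however, are off. First, you invoke $\nu$-bipotence and the addition rule of Construction~\ref{defn50} to argue that a sum $a_1+\cdots+a_n$ of elements of $\tM$ ``collapses'' into a single layer. A general layered \predomain0 is \emph{not} $\nu$-bipotent --- that is precisely why \predomains0 are introduced alongside \bidomains0 (polynomial \semirings0 being the motivating non-bipotent example), and Construction~\ref{defn50} is only the prototype, not the general object. Fortunately this step is superfluous: since $R=\dot\bigcup_{\ell}R_\ell$ is a partition, every element of $\overline{\tM}$ automatically lies in exactly one layer, so $\overline{\tM}_\ell:=\overline{\tM}\cap R_\ell$ is well defined without any collapsing argument. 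Second, your claim that Axiom A4 forces the restricted sort transition maps to land in $\overline{\tM}$ only covers the maps $\nu_{m,\ell}$ with $m$ an iterated sum $\ell+\cdots+\ell$: A4 gives $\nu_{2\ell,\ell}(a)=a+a$ and, inductively, $\nu_{n\ell,\ell}(a)$ as an $n$-fold sum, but it says nothing about, say, $\nu_{3,2}$ applied to an element of $\tM$ lying in layer $2$ when $L=\Net$, nor about non-integer indices when $L$ is dense. So the resolution you offer for the ``main obstacle'' you correctly identify does not settle the general case. To be fair, the paper's proof does not address this point at all --- it simply transfers the layering and transition data from $R$ --- so the discrepancy is more a matter of your presenting an incomplete computation as a proof than of your missing something the paper supplies; but as written, that part of your argument should either be restricted to the indices it actually reaches or replaced by the paper's (tacit) convention that $\overline{\tM}$ carries the structure induced from $R$.
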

 \begin{proof} $\overline{\tM}$ is a \semiring0, by distributivity. Axiom  A1 is given, and the other axioms follow
 a fortiori.
 \end{proof}

\begin{defn}\label{tangen}
The \textbf{tangibly generated} sub-\semiring0 is the
sub-\semiring0 generated by $R_1$; if this is~$R$, we say that $R$
is \textbf{tangibly generated}.
 \end{defn}
%
%
%

\begin{rem}\label{Krems} Several initial observations are in order.
\begin{enumerate} \eroman

\item The layered structure resembles that of a graded algebra,
with two major differences: On the one hand, the condition that
$R$ is the disjoint union of its layers is considerably stronger
than the usual condition that $R$ is the direct sum of its
components; on the other hand, Axioms ~A4 and~B show that the
layers are not quite closed under addition. \pSkip

\item This paper is mostly about $L$-layered \bidomains0  (in
particular, $L$-layered \bisemifields0). However, since
$\nu$-bipotence does not hold for polynomials, one considers the
more general $L$-layered pre-\domains0  when studying polynomial
\semirings0. \pSkip

\item  For each $\ell\in \Lz$ we introduce the sets
$$\FR_{\ge \ell}:= \bigcup_{m\ge \ell}\R _m \qquad{and} \qquad \FR_{>\ell}:=
\bigcup_{m> \ell}\R _m .$$

Many of  our current examples satisfy $L = L_{\ge 1}$ and thus $R
= \FR_{\ge 1} $. When $R$ is an $L$-layered \bidomain0, we claim
that $\FR_{\ge 1}$ is an $L_{\ge 1}$-layered sub-\bidomain0\ of
$R$, and $\FR_{\ge k }$ and $\FR_{>k}$ are \semiring0 ideals
of~$\FR_{\ge 1}$, for each $k \in L_{\ge 1}.$ Indeed, this is an
easy verification of the axioms, mostly  from Axiom~A2. \pSkip

 \item Given any $L$-layered \bidomain0 $R$ and any multiplicative submonoid $\tM$ of
 $R_{\ge 1}$, we want to define the $L$-layered sub-\bidomain0 of $R$
 generated by $\tM$. First we take $$\tM' := \{ \nu_{\ell, k}(a): \ell \ge k \in L, \ a \in
 \tM \cap R_k\},$$ which is a submonoid closed under the transition maps.
Then we take $$\tM'': = \tM' \cup \{ a+b: \ a,b \in \tM' \quad
\text{with} \quad
 a\nucong b \}.$$ This is closed under all the relevant operations,
so is the desired $L$-layered \bidomain0. Note that the second
stage is unnecessary for $a=b$, in view of Axiom~A4. \pSkip

\item Although ubiquitous in the definition, the sort transition
maps get in the way of computations, and it is convenient to
define the elements
  \begin{equation}
\label{idemp} e_\ell:= \nu_{\ell,1}(\rone)\quad (\ell\ge 1).
\end{equation}
If $a\in \R _k,$\ $\ell\in \Lz,$ and $\ell\ge 1,$ we conclude by
Axiom A3 that
$$\nu_{ \ell\cdot k,k }(a)=\nu_{\ell\cdot k,1 \cdot k}(a\cdot \rone )
= \nu_{\ell,1}(\rone)\cdot\nu_{k,k}(a)=\nu_{\ell,1}(\rone)\cdot a
= e_\ell a.$$ Thus the sort transition map $\nu_{ \ell\cdot k,k }$
means multiplication by $ e_\ell.$

Note that $e_k + e_\ell = e_{k+\ell}$ by Axiom A4.

 The element
$e_\ell$ is a (multiplicative) idempotent of $R$ iff $\ell^2 =
\ell$ in $L$. In particular, $e_1$ and $e_\infty$ (when $\infty
\in L$) are idempotents of $R$.

\end{enumerate}
\end{rem}

Let us introduce the \textbf{sorting map} $s :\R\to L,$ which
sends every element $a\in\R_\ell$ to its sort index $\ell$, and we
view the \semiring0  $\R$ as an object fibered by~$s$ over the
sorting \semiring0~$L$.

\begin{rem}  Axioms A1 and A2   yield
the conditions
\begin{equation}\label{sortval} \lv(\rone)=\lone, \qquad \lv(ab)=\lv(a)\lv(b), \qquad \forall a,b\in\R.\end{equation}
Also,  Axiom A4 yields $s(a+a) = s(a) + s(a) = 2s(a),$ thereby
motivating us to view addition of an element with itself as
doubling the layer. Applying $\nu$-bipotence to Axiom B  shows
that
$$s(a+b) \in \{ s(a), s(b) , s(a)+s(b)\}.$$
\end{rem}

 To emphasize the sorting map, as well as the order on $L$, we sometimes write
$\ldsPR$ for a given $\Lz$-layered \bidomain0 $\R$ with sort
transition maps $(\nu_{m,\ell}: m \ge \ell)$ and their
accompanying sorting map $s: R\to L.$

%
%
%
%

\subsection{Uniform $L$-Layered \bidomains0 }\label{sec:1to1}$ $

There are two main  examples coming from tropical mathematics.

\begin{enumerate} \ealph \item Let $R = \scrR(L,\tG)$ (corresponding to the ``naive'' tropical geometry). By Construction~\ref{defn50}, the sort transition maps $\nu_{m,\ell}$ are
all bijective. \pSkip

\item
Suppose $\mathbb K$ is the field of Puiseux series $\{ f: = \sum _{u \in
\Q} \a _u \la ^u :$ $f$ has well-ordered support$ \}$ over a
given field $F$. Then we have the $m$-valuation $v: \mathbb K \to \Q$
taking any Puiseux series $f$ to the lowest real number $u$ in its
support.

We incorporate
$\mathbb K$ into the structure of $R$, by putting $R_\ell$ to be a
copy of $\mathbb K$ for $\ell \le 1$ and $R_\ell$ to be a copy of
$\tG$ for  $\ell
> 1$. We take the $\nu_{m,\ell}$ to be $v$ whenever $m > 1  \geq
\ell$, and the identity otherwise. In this way, $v$ is preserved within the structure of $R$.
\end{enumerate}

We focus on the first case, since one can reduce to it anyway via
the equivalence given below in Definition~\ref{equivrel} (which
takes us from the usual algebraic world to the tropical world).

\begin{defn} An $L$-layered pre-\domain0 $R$ is  \textbf{uniform} if
$\nu_{\ell,k}$ is 1:1 and onto for each $\ell >k.$

\end{defn}
\begin{example} Example~\ref{defn50} is a uniform $\Lz$-layered
\bidomain0, when the monoid $\tG$ is cancellative.
\end{example}

Let us see how the layered theory simplifies for uniform
$L$-layered \bidomains0, enabling us to remove the sort transition
maps $\nu_{\ell,k}$ from the picture.

\begin{lem}\label{tang11}
Any element $a \in R_\ell$ can be written uniquely as $e_\ell a_1
= \nu _{\ell,1} (a_1)$ for $a_1 \in R_1.$
\end{lem}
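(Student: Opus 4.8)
The plan is to use the uniformity hypothesis to transport the distinguished element $\rone \in R_1$ across the sort transition map, together with Lemma~\ref{gen1} (Remark~\ref{Krems}(v)) which identifies $\nu_{\ell,1}$ with multiplication by $e_\ell$. First I would fix $a \in R_\ell$. Since $R$ is uniform, the map $\nu_{\ell,1}: R_1 \to R_\ell$ is a bijection, so there is a unique $a_1 \in R_1$ with $\nu_{\ell,1}(a_1) = a$. This immediately gives existence and uniqueness of the representation $a = \nu_{\ell,1}(a_1)$; it remains only to identify $\nu_{\ell,1}(a_1)$ with $e_\ell a_1$.

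For that identification I would invoke the computation already recorded in Remark~\ref{Krems}(v): for $b \in R_k$ and $\ell \ge 1$ one has $\nu_{\ell k, k}(b) = e_\ell b$, by writing $b = \rone \cdot b$ and applying Axiom~A3 together with $e_\ell = \nu_{\ell,1}(\rone)$. Specializing to $k = 1$ and $b = a_1$ yields $\nu_{\ell,1}(a_1) = e_\ell a_1$, so $a = e_\ell a_1$ as claimed. (Here I am implicitly using that $\ell \ge 1$, which is part of the running hypothesis $L = L_{\ge 1}$ noted before Remark~\ref{tang1}, or at least $\ell > 1$ in the statement, so $e_\ell$ is defined.)

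I expect no serious obstacle here: the statement is essentially a restatement of uniformity once the dictionary ``$\nu_{\ell,1} = $ multiplication by $e_\ell$'' is in place, and the latter is already derived in the excerpt. The only point requiring a line of care is the uniqueness claim, which must be phrased as uniqueness of $a_1 \in R_1$ and which follows precisely because $\nu_{\ell,1}$ is injective by the definition of uniform. A brief remark that $\nu_{1,1} = \id$ handles the degenerate case $\ell = 1$ (where $e_1$ is idempotent and $e_1 a_1 = a_1$), though the statement as given concerns $\ell$ with $\ell \ge k$, i.e. effectively $\ell \ge 1$, so this is a non-issue.

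\begin{proof}
Fix $a \in R_\ell$. Since $R$ is uniform, the sort transition map $\nu_{\ell,1}: R_1 \to R_\ell$ is a bijection, so there is a unique element $a_1 \in R_1$ with $\nu_{\ell,1}(a_1) = a$. It remains to check that $\nu_{\ell,1}(a_1) = e_\ell a_1$. Writing $a_1 = \rone \cdot a_1$ and applying Axiom~A3 with $k = m' = 1$ and $m = \ell$, we obtain
\begin{equation*}
\nu_{\ell,1}(a_1) = \nu_{\ell, 1}(\rone \cdot a_1) = \nu_{\ell,1}(\rone) \cdot \nu_{1,1}(a_1) = e_\ell \cdot a_1,
\end{equation*}
using \eqref{idemp} and $\nu_{1,1} = \id_{R_1}$. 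Hence $a = e_\ell a_1 = \nu_{\ell,1}(a_1)$, and $a_1$ is the unique element of $R_1$ with this property.
\end{proof}
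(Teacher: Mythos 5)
Your proof is correct and follows the paper's own argument exactly: uniqueness and existence come from the bijectivity of $\nu_{\ell,1}$ in the uniform case, and the identification $\nu_{\ell,1}(a_1) = e_\ell a_1$ is the Axiom~A3 computation already recorded in Remark~\ref{Krems}(v). The paper's proof is just a terser version of the same two steps.
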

\begin{proof} Existence   and uniqueness of $a_1$ are clear since $\nu _{\ell,1}$
is presumed to be~1:1. The last assertion follows from Axiom A3.
\end{proof}

\begin{prop}\label{equalnu}  In a uniform $\Lz$-layered \bidomain0, if $a
 \nucong b$ for $a \in  R_k$ and $b\in R_\ell$ with $\ell \geq k$ then $b = \nu_{\ell,k}(a).$ In particular, if $a
 \nucong b$ for $a,b \in R_\ell$, then $a=b.$
\end{prop}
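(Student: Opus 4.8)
The plan is to exploit the injectivity of the sort transition maps supplied by uniformity, together with the cocycle identity $\nu_{m,\ell}\circ\nu_{\ell,k}=\nu_{m,k}$ built into Definition~\ref{defn10}. Note first that since the relation $a\nucong b$ is witnessed by the maps $\nu_{m,k}$ and $\nu_{m,\ell}$, the indices $k,\ell$ are positive, so all transition maps appearing below are defined; and $\ell\ge k$ by hypothesis, so $\nu_{\ell,k}$ makes sense.

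First I would unwind the hypothesis: $a\nucong b$ means there is some $m\ge k,\ell$ with $\nu_{m,k}(a)=\nu_{m,\ell}(b)$ in $R_m$. Since $m\ge\ell\ge k$, the composition law gives $\nu_{m,k}=\nu_{m,\ell}\circ\nu_{\ell,k}$, so the left side rewrites as $\nu_{m,\ell}\big(\nu_{\ell,k}(a)\big)$. Thus we have the equality $\nu_{m,\ell}\big(\nu_{\ell,k}(a)\big)=\nu_{m,\ell}(b)$, which is an equation obtained by applying the single map $\nu_{m,\ell}$ to the two elements $\nu_{\ell,k}(a),\,b\in R_\ell$. Now invoke uniformity: $\nu_{m,\ell}$ is $1{:}1$ when $m>\ell$, and equals $\id_{R_\ell}$ when $m=\ell$, so in either case it is injective on $R_\ell$. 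Cancelling it yields $\nu_{\ell,k}(a)=b$, the first assertion. For the ``in particular'' clause, take $k=\ell$; then $\nu_{\ell,\ell}=\id_{R_\ell}$, so $b=\nu_{\ell,\ell}(a)=a$.

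Honestly, there is no real obstacle here: the argument is essentially ``inject $\nu_{m,\ell}$ and cancel.'' The only points requiring a little care are that $m$ may coincide with $\ell$ (so one must state injectivity of $\nu_{m,\ell}$ for $m\ge\ell$, covering the identity case, rather than only for $m>\ell$), and that the composition law is applied in the regime $m\ge\ell\ge k$ — which is exactly what the hypothesis $\ell\ge k$ and the choice of witness $m$ guarantee.
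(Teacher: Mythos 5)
Your argument is correct. The paper's own proof is a one-line appeal to Lemma~\ref{tang11}: by uniformity every element of $R_k$ has a unique tangible representative, $a=\nu_{k,1}(a_1)$ and $b=\nu_{\ell,1}(b_1)$, and the hypothesis forces $a_1=b_1$, whence $b=\nu_{\ell,k}(a)$. You instead stay entirely away from the tangible layer: you rewrite the witness equality $\nu_{m,k}(a)=\nu_{m,\ell}(b)$ via the cocycle identity as $\nu_{m,\ell}(\nu_{\ell,k}(a))=\nu_{m,\ell}(b)$ and cancel the injective map $\nu_{m,\ell}$. The two routes rest on the same fact (injectivity of the transition maps), but yours is marginally more self-contained --- it does not presuppose the factorization through $R_1$ --- and your explicit remarks that the composition law is being used in the regime $m\ge\ell\ge k$ and that the case $m=\ell$ is covered by $\nu_{\ell,\ell}=\id_{R_\ell}$ are exactly the points the paper leaves implicit. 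No gaps.
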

\begin{proof}  An immediate application of Lemma~\ref{tang11}.
\end{proof}

Now we can remove the sort transition maps from the definition,
when we write $R = \bigcup_{\ell \in L}\ e_\ell R_1.$

\begin{prop}\label{removesort} If $(L,\cdot \, )$ is a
multiplicative group, then one has $\nu_{\ell,k}(a_k) \nucong e_m
a_k$ where $\ell = m k$.

In a uniform $\Lz$-layered \bidomain0 (for $L$ arbitrary), the
transition map $\nu_{m,\ell}$ is given by $e_\ell a_1 \mapsto e_m
a_1$.

\end{prop}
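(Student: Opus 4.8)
The plan is to unwind the definition of $e_\ell$ in a uniform $L$-layered \bidomain0 and reconcile it with the sort transition maps, using the computation already recorded in Remark~\ref{Krems}(v). First I would treat the second assertion, since the first is essentially a special case of the same bookkeeping. Recall from \eqref{idemp} that $e_\ell = \nu_{\ell,1}(\rone)$, and from Remark~\ref{Krems}(v) that for $a \in R_k$ and $\ell \ge 1$ one has $\nu_{\ell k, k}(a) = e_\ell a$. So, given an arbitrary element of $R_\ell$ written via Lemma~\ref{tang11} as $e_\ell a_1 = \nu_{\ell,1}(a_1)$ with $a_1 \in R_1$, I would compute $\nu_{m,\ell}(e_\ell a_1) = \nu_{m,\ell}(\nu_{\ell,1}(a_1)) = \nu_{m,1}(a_1) = e_m a_1$, where the middle equality is the composition law $\nu_{m,\ell}\circ\nu_{\ell,1} = \nu_{m,1}$ from Definition~\ref{defn10}, valid since $m \ge \ell \ge 1$. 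This is exactly the claim that $\nu_{m,\ell}$ sends $e_\ell a_1$ to $e_m a_1$.

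For the first assertion, when $(L,\cdot\,)$ is a multiplicative group, I would argue as follows. Fix $a_k \in R_k$ and $\ell = mk$. Again by Remark~\ref{Krems}(v) (applied with the index $m$ there in the role of our multiplier), $\nu_{mk,k}(a_k) = e_m a_k$, provided $m \ge 1$; when $L$ is a group this makes sense as written. The relation $\nu_{\ell,k}(a_k) \nucong e_m a_k$ then follows, since both elements are literally equal here — note that $e_m a_k \in R_{m k} = R_\ell$ by Axiom A2, so the two sides lie in the same layer and equality gives $\nucong$ a fortiori (using $\nu_{\ell,\ell} = \id$ in the definition of $\nucong$). Alternatively, even without assuming $m \ge 1$, one can invoke Proposition~\ref{equalnu}: uniqueness of the layer-$\ell$ element congruent to $a_k$ forces $\nu_{\ell,k}(a_k)$ and $e_m a_k$ to coincide once one checks $e_m a_k \nucong a_k$, which holds because $e_m a_k = \nu_{m,1}(\rone)\cdot a_k = \nu_{mk,k}(\rone\cdot a_k)$ by Axiom A3.

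The only mild subtlety — and what I expect to be the real content — is keeping the index arithmetic straight: the transition maps $\nu_{m,\ell}$ are indexed by the \emph{target and source layers}, whereas the idempotents $e_\ell$ are indexed multiplicatively, so one must be careful that $\nu_{mk,k}$ is ``multiplication by $e_m$'' and not by $e_{m k}$ or $e_{mk/k}$ in some other sense. Once the identity $\nu_{mk,k}(a) = e_m a$ from Remark~\ref{Krems}(v) is invoked correctly, everything else is a direct substitution using the composition law for the $\nu$'s, Lemma~\ref{tang11} for the normal form $e_\ell a_1$, and Axiom A2 to locate products in the right layer. No genuine obstacle arises; the proof is a short verification.
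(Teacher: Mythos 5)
Your treatment of the second assertion is correct and in fact more direct than the paper's: you compute $\nu_{m,\ell}(e_\ell a_1)=\nu_{m,\ell}(\nu_{\ell,1}(a_1))=\nu_{m,1}(a_1)=e_m a_1$ straight from the composition law and Lemma~\ref{tang11}, which works for arbitrary $L$, whereas the paper simply declares the second assertion ``clear'' after proving the first. For the first assertion your route (Axiom~A3 via Remark~\ref{Krems}(v)) is the same as the paper's, but you leave the one genuinely substantive point unestablished: why is $m=\ell k^{-1}\ge 1$, so that $e_m=\nu_{m,1}(\rone)$ is defined at all? You write ``provided $m\ge 1$; when $L$ is a group this makes sense as written,'' but the group hypothesis by itself only guarantees the existence of $k^{-1}$, not the inequality. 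The paper supplies the missing step: since $\ell\ge k$ in the ordering on $L$ means $\ell=k+p$ for some $p\in L$ (the ghost-sort order of Definition~\ref{ghostsurp1}), one gets $m=\ell k^{-1}=1+pk^{-1}\ge 1$. That short computation is really the content of the first assertion and should appear explicitly.

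Your fallback ``even without assuming $m\ge 1$, invoke Proposition~\ref{equalnu}'' does not repair this: that proposition is stated for uniform layered \bidomains0, while the first assertion is not restricted to the uniform case, and in any event the expression $e_m a_k$ is meaningless until $m\ge 1$ is known. Once $m\ge 1$ is in hand, your Axiom~A3 computation $e_m a_k=\nu_{m,1}(\rone)\cdot\nu_{k,k}(a_k)=\nu_{mk,k}(a_k)=\nu_{\ell,k}(a_k)$ actually yields equality, from which the claimed $\nucong$ follows a fortiori, exactly as you observe.
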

\begin{proof} If $\ell = k$ there is nothing to prove, so we assume that $\ell > k $ are non-negative, and write $\ell = k + p$ for $p \in L.$ Then $\ell =
k(1 + pk^{-1}),$ and $m = 1 + pk^{-1}.$ Now  $e_m a_k \in R_\ell$,
and $$e_m a_k \nucong a_k \nucong \nu_{\ell,k}(a_k).$$ The second
assertion now is clear.\end{proof}

Thus the sort transition maps have been replaced by multiplication
by the $ e_m.$
 Note that $\nu$-bipotence and Axiom B$'$ could then be used as the definition for
addition in $R$, and we summarize our reductions:

\begin{prop}\label{unif1} A uniform $\Lz$-layered \bidomain0 can be described as
the \semiring0 $$R := \dot\bigcup _{\ell \in L} R_\ell,$$ where
 each $R_\ell = e_\ell R_1$, $(R_1,\cdot \, )$ is a monoid, and there is a 1:1 correspondence
 $R_1 \to R_\ell$ given by $a \mapsto e_\ell a$ for each $a \in R_1.$
\end{prop}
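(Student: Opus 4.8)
The plan is to show that the abstract description in Proposition~\ref{unif1} is equivalent to the structure of a uniform $\Lz$-layered \bidomain0 by extracting exactly the data that Lemma~\ref{tang11} and Propositions~\ref{equalnu}--\ref{removesort} have already isolated. First I would note that a uniform $\Lz$-layered \bidomain0 $R$ comes with its partition $R = \dot\bigcup_{\ell \in L} R_\ell$ and, by Lemma~\ref{tang11}, each $R_\ell$ is in bijection with $R_1$ via $a_1 \mapsto e_\ell a_1 = \nu_{\ell,1}(a_1)$; this gives the claimed form $R_\ell = e_\ell R_1$ and the claimed $1:1$ correspondence $R_1 \to R_\ell$. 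The multiplicative monoid structure on $R_1$ is just the restriction of the product on $R$ (well-defined since $\rone \in R_1$ by Axiom~A1 and $R_1 R_1 \subseteq R_1$ by Axiom~A2 with $k = \ell = 1$), and $R_1$ is cancellative because $R$ is. So one direction is essentially a repackaging.

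For the converse direction — reconstructing a genuine uniform $\Lz$-layered \bidomain0 from the bare data $(R_1, \cdot\,)$ together with the formal symbols $e_\ell R_1$ — I would define multiplication on $R = \dot\bigcup_\ell e_\ell R_1$ by $e_k a \cdot e_\ell b := e_{k\ell}(ab)$, which is forced by Axioms~A2 and~A3; one checks associativity and commutativity directly from those of $L$ and $R_1$. Then I would define addition by the $\nu$-bipotence recipe together with Axiom~B, as flagged in the remark just before the proposition: for $e_k a$ and $e_\ell b$, compare $a$ and $b$ in $R_1$ (using the pre-order on $R_1$ induced from $L$-free data — here one needs $R_1$ to carry a total pre-order, which in the intended examples is the one coming from $\tG$, so I would state this hypothesis explicitly or point to how the order is part of the data), set the sum to be the layered element on the strictly larger side, and for $a \nucong b$ put $e_k a + e_\ell b := e_{k+\ell} a$, matching Axiom~B; the special case $k = \ell$ is consistent by Axiom~A4, as the proposition's companion remark observes. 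Finally I would recover the sort transition maps as $\nu_{m,\ell} : e_\ell a \mapsto e_m a$, exactly as in Proposition~\ref{removesort}, and verify that this family satisfies $\nu_{\ell,\ell} = \id$ and the cocycle identity $\nu_{m,\ell} \circ \nu_{\ell,k} = \nu_{m,k}$ (immediate, since composition corresponds to $e_\ell \cdot e_m \cdot(\text{--})$ and $e_\ell e_m \nucong e_{\ell m}$ under the group or monoid law on $L$), together with Axioms~A1--A4 and~B; uniformity ($\nu_{\ell,k}$ bijective) is then automatic from the bijection $R_1 \to R_\ell$.

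The routine part is the axiom-by-axiom verification; the step I expect to be the main obstacle — or at least the one requiring care — is making precise \emph{which} order data is needed to define addition, since the bare phrase ``$(R_1, \cdot\,)$ is a monoid'' in the statement suppresses the total pre-order on $R_1$ (equivalently, the $\nu$-order $\le_\nu$ on $R$) that is indispensable for writing down the case distinction in Construction~\ref{defn50}'s addition rule. In a clean write-up I would either absorb the order into the data or observe that $\le_\nu$ on $R$ restricts to a total pre-order on $R_1$ and that, conversely, such a pre-order on $R_1$ propagates via the $e_\ell$-multiplication to all of $R$; once that is pinned down, everything else follows formally from Lemma~\ref{tang11} and Propositions~\ref{equalnu} and~\ref{removesort}, which have already done the substantive work.
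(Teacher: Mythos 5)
Your forward direction is exactly what the paper intends: Proposition~\ref{unif1} is stated as a summary of the preceding reductions (it carries no separate proof in the paper), resting on Lemma~\ref{tang11} for the unique decomposition $a = e_\ell a_1$, on Proposition~\ref{equalnu}, and on Proposition~\ref{removesort} for replacing $\nu_{m,\ell}$ by multiplication by $e_m$; your first paragraph reproduces precisely this repackaging, so on that score you and the paper coincide. Where you go beyond the paper is the converse reconstruction, and your flag about the suppressed order data is a genuine and correct observation: the bare multiplicative monoid $(R_1,\cdot\,)$ does not determine addition, and one needs the total pre-order $\le_\nu$ on $R_1$ (the order on $\tG$ in Construction~\ref{defn50}) to write the case distinction. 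The paper handles this implicitly rather than explicitly: the sentence preceding the proposition says that $\nu$-bipotence and Axiom B$'$ ``could then be used as the definition for addition,'' and the companion Proposition~\ref{tang111} lists the operations as given by A2$'$, A3$'$, B$'$ and $\nu$-bipotence --- all of which presuppose the $\nu$-comparison as part of the data. So your write-up is correct and, if anything, more careful than the source about what the phrase ``can be described as'' must carry; the only caution is not to present the converse as something the paper claims follows from the monoid structure alone.
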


\begin{prop}\label{tang111} In the uniform case, axioms A2 and A3 can be replaced by the
respective axioms:

\boxtext{
\begin{enumerate}
 \item[A2$'$.] If $a = e_k a_1\in \R _k$ and  $b = e_\ell b_1\in
\R _\ell,$ for $a_1,b_1 \in R_1,$ then $ab = (a_1b_1)e_{k\ell}.$
 \item[A3$'$.] $e_\ell e_k = e_{\ell k}$ for all $k, \ell \in L.$

\end{enumerate}}

  Furthermore,  Axiom A4 now is equivalent
to  Axiom B, which we can reformulate as: \boxtext{
\begin{enumerate}
    \item[B$'$.] If $a  = e_k a_1$ and $b = e_\ell a_1$ (so that $ a \nucong b$), then $a+b =
e_{k+\ell} a_1.$
\end{enumerate}}

The operations in $R$ are given by Axioms A2$\,'$, A3$\,'$,
B$\,'$, and $\nu$-bipotence.\end{prop}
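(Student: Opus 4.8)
The plan is to verify, one axiom at a time, that the reformulated axioms A2$'$, A3$'$, B$'$ are equivalent (in the presence of uniformity and the other axioms) to the original axioms A2, A3, A4, B, using the normal form $a = e_\ell a_1$ from Lemma~\ref{tang11} and the identification of sort transition maps with multiplication by idempotents from Remark~\ref{Krems} and Proposition~\ref{removesort}. Throughout, I would freely use that in the uniform case every element of $R_\ell$ is uniquely $e_\ell a_1$ with $a_1 \in R_1$, that $\nu_{\ell,k}(a) = e_\ell e_k^{-1} a$ amounts to multiplication by the appropriate idempotent, and that $e_k e_\ell = e_{k\ell}$ and $e_k + e_\ell = e_{k+\ell}$ are already recorded (the latter being exactly Axiom~A4 applied to $\rone$).

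First I would treat A3$'$: the identity $e_\ell e_k = e_{\ell k}$ is the special case of Axiom~A3 with $a = b = \rone$, $m = \ell$, $m' = k$, using $\nu_{\ell,1}(\rone) = e_\ell$; conversely, A3$'$ together with A2$'$ recovers A3 by writing $a = e_k a_1$, $b = e_\ell b_1$ and computing $\nu_{m,k}(a)\cdot\nu_{m',\ell}(b)$ as multiplication by $e_{m}e_{m'} = e_{mm'}$ applied to $ab$. Next, for A2$'$: given A2 and A3$'$, for $a = e_k a_1 \in R_k$ and $b = e_\ell b_1 \in R_\ell$ we get $ab = e_k e_\ell (a_1 b_1) = e_{k\ell}(a_1 b_1) \in R_{k\ell}$, which is A2$'$; conversely A2$'$ immediately gives $ab \in R_{k\ell}$, which is A2. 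Then for B$'$: if $a = e_k a_1$ and $b = e_\ell a_1$, then $a \nucong b$ since both sort-transition up to a common element, and $a + b = (e_k + e_\ell)a_1 = e_{k+\ell}a_1 \in R_{k+\ell}$, using distributivity of multiplication over addition in the \semiring0 $R$ together with $e_k + e_\ell = e_{k+\ell}$; this is the assertion of B$'$, and it visibly implies the tangibly-generated instances of Axiom~B, while the general case of Axiom~B ($a \nucong b$ with $a, b$ not necessarily of this restricted form) follows because uniformity forces $a \nucong b$ to mean $a = e_k a_1$, $b = e_\ell a_1$ for a common $a_1 \in R_1$ by Proposition~\ref{equalnu} and Lemma~\ref{tang11}. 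I would also note that B$'$ specialized to $a = b$ (i.e. $k = \ell$) reproduces Axiom~A4 on $R_1$, and then multiplying through by an arbitrary $a_1$ gives A4 in general; conversely A4 plus distributivity yields B$'$. The remaining claim, that the operations in $R$ are determined by A2$'$, A3$'$, B$'$ and $\nu$-bipotence, is then immediate: every element is $e_\ell a_1$, products are computed by A2$'$--A3$'$, and a sum $e_k a_1 + e_\ell b_1$ is resolved by $\nu$-bipotence into the case $a_1 \nucong b_1$ (handled by the ordered-monoid comparison on $R_1$ giving the strictly larger summand) or $a_1 = b_1$ after applying transition maps (handled by B$'$).

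The main obstacle I anticipate is bookkeeping around the precise meaning of ``$a \nucong b$'' in the uniform setting and making sure the reduction of the general Axiom~B to the restricted form B$'$ is airtight — in particular, that when $a \in R_k$, $b \in R_\ell$ with $a \nucong b$, one genuinely has $a = e_k a_1$ and $b = e_\ell a_1$ for the \emph{same} $a_1 \in R_1$; this is where Lemma~\ref{tang11} and Proposition~\ref{equalnu} carry the weight, and I would spell that step out rather than leave it implicit. The rest is routine manipulation with the idempotents $e_\ell$ and distributivity.
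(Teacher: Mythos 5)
Your proof is correct and follows the same route the paper takes: rewriting everything via the normal form $a = e_\ell a_1$ of Lemma~\ref{tang11} and replacing the sort transition maps by multiplication by the idempotents $e_\ell$ (Proposition~\ref{removesort}). The paper's own proof is a single line covering only the derivation of Axiom A3 from $e_\ell a_1 \cdot e_k b_1 = e_{\ell k}(a_1 b_1)$; your version supplies the remaining verifications — in particular the reduction of the general case of Axiom B to B$'$ via Proposition~\ref{equalnu}, and the equivalence of A4 with B$'$ through distributivity and $e_k + e_\ell = e_{k+\ell}$ — all of which are sound.
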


\begin{proof} Axiom A3 follows from the observation that $e_\ell a_1
e_k b_1 = e_{\ell k} (a_1b_1)$; when $a_1, b_1 \in R_1$ then
$a_1b_1 \in R_1.$
\end{proof}



\subsection{Reduction to the uniform case}

In one sense, we can reduce the general case of an $L$-layered
pre-\domain0\ ~ $R$ to the uniform case. First we cut down on
superfluous elements. Note that if $\nu_{k,1}$ are onto for all $k
\ge 1$, then all the $\nu_{\ell,k}$ are onto for all $\ell \ge k$.
Indeed, if $a \in R_\ell$ then writing $a = \nu_{\ell,1}(a_1)$ we
have
$$a = \nu_{\ell,k}(\nu_{ k,1}(a_1)).$$

\begin{rem} Suppose $L = L _{\ge 1}.$ For any $L$-layered pre-\domain0 $\R := \ldR,$
if we replace $R_k $ by $\nu_{ k ,1}(R_1)$ for each $k \in L$, we
get an $L$-layered \bidomain0 for which all the $\nu_{\ell,k}$ are
onto. \end{rem}

Having reduced many situations to the case for which all the
$\nu_{\ell,k}$ are onto, we can get a uniform $L$-layered
pre-\domain0 by specifying when two elements are
``interchangeable'' in the algebraic structure.

\begin{defn}\label{equivrel} Define the equivalence relation
$$a \equiv b \quad \text{ when } \quad  s(a) = s(b) \  \text{and} \   a\nucong b.$$\end{defn}

In view of Proposition~\ref{equalnu}, this relation is trivial in
case $R$ is a uniform $L$-layered \bidomain0.

\begin{prop}\label{equivlem1}  The binary relation $<_\nu$ on an $L$-layered
pre-\domain0 $R$ induces a pre-order on the \semiring0 of
equivalence classes $\deqv{R}$. Furthermore, if $a \equiv b$, then
$ac \equiv bc$ and $a+c \equiv b+c$ for all $c\in R.$ Thus,
$\equiv$ is a congruence. \end{prop}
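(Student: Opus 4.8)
The plan is to verify the three claims in turn, each reducing to an unwinding of Definition~\ref{equivrel} together with the axioms for an $L$-layered pre-\domain0. First I would record the elementary observation that $a \nucong b$ is an equivalence relation: reflexivity is $\nu_{k,k} = \id$, symmetry is built into the formulation, and transitivity follows from the cocycle relation $\nu_{m,\ell}\circ\nu_{\ell,k} = \nu_{m,k}$ by pushing all three elements up to a common layer $m$. Since $s(a) = s(b)$ is visibly an equivalence relation, their conjunction $\equiv$ is one as well.

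Next I would show $<_\nu$ descends to $\deqv{R}$ and is a pre-order there. Well-definedness: if $a \equiv a'$ and $b \equiv b'$, I need $a <_\nu b \iff a' <_\nu b'$; since $a \nucong a'$ and $b \nucong b'$, pushing everything to a common high layer $m$ collapses $\nu_{m,\cdot}(a)$ and $\nu_{m,\cdot}(a')$ to the same element of $R_m$ (and similarly for $b, b'$), so the defining equation $\nu_{m,k}(a) + \nu_{m,\ell}(b) = \nu_{m,\ell}(b)$ holds for one pair iff it holds for the other. Transitivity of the induced relation on $\deqv{R}$ follows from bipotence in the fixed layer $R_m$ (which is genuinely bipotent via Axiom~B applied within a single layer, or rather from the max-rule for addition in $R_m$ inherited from Construction~\ref{defn50}), exactly as in Proposition~\ref{maxplus20}(ii); reflexivity is immediate from Axiom~A4 giving $\nu_{m,k}(a) + \nu_{m,k}(a) = \nu_{2m,k}(a)\nucong\nu_{m,k}(a)$.

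For the congruence claim, suppose $a \equiv b$; I must check $ac \equiv bc$ and $a+c \equiv b+c$ for all $c \in R$. For multiplication: $s(ac) = s(a)s(c) = s(b)s(c) = s(bc)$ by \eqref{sortval}, and $ac \nucong bc$ follows from Axiom~A3, since $\nu_{m,k}(a) = \nu_{m,\ell}(b)$ in $R_m$ implies $\nu_{m,k}(a)\cdot c = \nu_{m,\ell}(b)\cdot c$, and Axiom~A3 identifies these products with sort-transition images of $ac$ and $bc$. For addition: write $k = s(a) = s(b)$ and $j = s(c)$, and let $a' = \nu_{m,k}(a) = \nu_{m,k}(b)' $ — more precisely choose $m$ with $\nu_{m,k}(a) = \nu_{m,k}(b)$ and also $m \geq j$ — I would then compare $a+c$ with $b+c$ by comparing their images under the appropriate $\nu$. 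The three cases of \eqref{14} for $a+c$ are governed by comparing the $\tG$-components, which depend only on $a$ through its $\nu$-class, so $a+c$ and $b+c$ land in the same layer and have equal images in a common high layer, giving $a+c \equiv b+c$. I expect this last addition compatibility to be the main obstacle: one must handle separately the case $a \nucong c$ (where Axiom~B changes the layer, so I need $b \nucong c$ too, which follows by transitivity of $\nucong$) and the case $a \not\nucong c$ (where the max-rule picks out whichever summand dominates, and domination is a $\nu$-class invariant). Once both compatibilities are in hand, $\equiv$ preserves both operations of $R$, hence is a \semiring0 congruence, completing the proof.
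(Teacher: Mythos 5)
Your proposal is correct and follows essentially the same route as the paper's proof: multiplicativity of the sorting map together with Axiom A3 handles $ac\equiv bc$, and the three-way case split on the $\nu$-comparison of $a$ with $c$ (with Axiom B and transitivity of $\nucong$ in the case $a\nucong c$) handles $a+c\equiv b+c$. The extra detail you supply for the first assertion (which the paper dismisses as immediate) is harmless, though note that the quotient's reflexivity and transitivity should be argued modulo $\nucong$ rather than via literal bipotence inside a single layer $R_m$, since sums of $\nu$-equivalent elements move up a layer.
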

\begin{proof} The first assertion is immediate. For the second
assertion,  $s(ac) = s(a)s(c) = s(b)s(c) = s(bc)$ and $ac\nucong
bc$, proving $ac \equiv bc$.

Next, we consider addition. If $a
>_\nu c$, then $$a+c = a \equiv b = b+c.$$ If $a <_\nu c$, then $a+c = c = b+c.$ If  $a \nucong
c$, then $$s(a+c) = s(a) + s(c) =  s(b) + s(c) = s(b+c),$$ and
$a+c\nucong a  \nucong b \nucong b+c$.
\end{proof}

Let us summarize.

\begin{cor}\label{11red}  When the transition maps  $\nu_{ \ell,k}$ are onto, one can reduce to uniform $L$-layered \bidomains0, by means of the equivalence
relation $\equiv$ of Definition~\ref{equivrel}.\end{cor}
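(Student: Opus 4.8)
The plan is to assemble what has already been proved. By Proposition~\ref{equivlem1} the relation $\equiv$ of Definition~\ref{equivrel} is a \semiring0 congruence, so by Remark~\ref{cong10} the set $\bar R := \deqv{R}$ of $\equiv$-classes carries a \semiring0 structure and $q : R \to \bar R$, $a \mapsto [a]$, is an onto homomorphism; the standing hypothesis that the $\nu_{\ell,k}$ are onto is what will be used for uniformity. First I would transport the layered data to $\bar R$. Since $a \equiv b$ forces $s(a)=s(b)$, the sorting map descends to $\bar s : \bar R \to L$; set $\bar R_\ell := \bar s^{-1}(\{\ell\}) = q(R_\ell)$, so that $\bar R = \dot\bigcup_{\ell\in L}\bar R_\ell$. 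Define $\bar\nu_{m,\ell}([a]) := [\nu_{m,\ell}(a)]$ for $a\in R_\ell$; this is well defined because $a\equiv a'$ in $R_\ell$ gives $\nu_{m,\ell}(a)\nucong a\nucong a'\nucong\nu_{m,\ell}(a')$ with both sides in $R_m$. The functoriality of the $\bar\nu_{m,\ell}$ and Axioms A1--A4, B are identities or implications in the \semiring0 operations and the $\nu$'s, hence descend through the surjection $q$, so $\bar R$ is an $L$-layered pre-\domain0; and $q$ preserves $\nucong$ (if $x\nucong y$ in $R$ then $\nu_{m,k}(x)=\nu_{m,\ell}(y)$ for some $m$, so $[x]\nucong[y]$), whence $\nu$-bipotence descends and $\bar R$ is an $L$-layered \bidomain0 once one checks that the required multiplicative cancellation is inherited from $R$.

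The substantive point is uniformity of $\bar R$: for $\ell>k$ the map $\bar\nu_{\ell,k}$ must be a bijection. Surjectivity is immediate from surjectivity of $\nu_{\ell,k}$. For injectivity, suppose $\bar\nu_{\ell,k}([a])=\bar\nu_{\ell,k}([b])$ with $a,b\in R_k$; then $\nu_{\ell,k}(a)\equiv\nu_{\ell,k}(b)$, so in particular $\nu_{\ell,k}(a)\nucong\nu_{\ell,k}(b)$, and therefore $a\nucong\nu_{\ell,k}(a)\nucong\nu_{\ell,k}(b)\nucong b$; since also $s(a)=k=s(b)$ we get $a\equiv b$, i.e.\ $[a]=[b]$. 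This collapsing of $\nu$-equivalent elements of equal layer is exactly what the relation $\equiv$ was designed to achieve, so this is the crux; by the observation following Definition~\ref{equivrel} (which rests on Proposition~\ref{equalnu}) the relation $\equiv$ is already trivial when $R$ is itself uniform, so nothing is lost on objects that are already in the target.

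Finally, to make ``reduce'' precise I would note functoriality: a \semiring0 homomorphism respecting layers, sort transition maps and order preserves $s$ and $\nucong$, hence carries $\equiv$-classes to $\equiv$-classes and induces a morphism $\bar R\to\bar R'$ of the same kind, so $R\mapsto\bar R$ is a retraction onto the full subcategory of uniform $L$-layered \bidomains0. I expect no real obstacle here, only bookkeeping: checking that each axiom and each structure map genuinely descends through $q$, and pinning down the exact sense in which multiplicative cancellation passes to $\bar R$ (which causes no difficulty in the cases of interest, e.g.\ $L=\Net$).
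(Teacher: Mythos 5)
Your proposal is correct and follows essentially the same route as the paper: quotient by the congruence $\equiv$ (justified by Proposition~\ref{equivlem1}), transport the layering and transition maps to $\deqv{R}$, and observe that surjectivity of the $\nu_{\ell,k}$ plus the identification of $\nu$-equivalent elements of equal sort makes the induced transition maps bijective. The paper's proof is just a terser version of this (it asserts "the transition maps clearly are bijective" where you spell out the injectivity argument), so your write-up simply supplies the bookkeeping the paper leaves implicit.
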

\begin{proof} Any
$\nu$-equivalent elements having the same sort are identified.
Then Proposition~\ref{equivlem1} shows that $\deqv{R}$ is an
$L$-layered \bidomain0, under the natural induced layering, and
the transition maps on $\deqv{R}$ clearly are bijective.
\end{proof}



 \subsubsection{Ghosts and the surpassing relation}\label{ghocom1}

We want a layered version of ghosts.

\begin{defn}\label{ghostsurp1} An $\ell$-\textbf{ghost sort} is an element $\ell + k \in L$ for some $0 \ne k \in
 L$.  An element   $b \in R$ is an
$\ell$\textbf{-ghost} (for given $\ell \in L$) if $s(b)\in L$ is
an $\ell$-ghost sort.   A \textbf{ghost element} of $R$ is a
1-ghost.
\end{defn}

Thus, the relation
 $(\ge)$ on $L$ satisfies
$$m \ge \ell \quad \text { when }\ \left\{
\begin{array}{l}
  m = \ell   \\
\text{or} \\
  \text{$m$ is an $\ell $-ghost sort.}
\end{array}\right.
$$
%

Here is a key relation in the theory, even though it does not play
a major role in this discussion.

\begin{defn}\label{ghostsurpL}   The \textbf{$L$-surpassing relation}
$\lmodWL$ is given by
 \begin{equation}  a \lmodWL b \quad
\text{  iff either } \quad  \begin{cases} a=b+c &  \
\text{with}\ c \ \text{an} \  s(b)\text{-ghost},
                 \quad \\ a=b,\\   
   a\nucong b & \ \text{with}\
 a  \
\text{an} \  s(b)\text{-ghost}.\end{cases} \end{equation}
\end{defn} It follows that if $a \lmodWL b$, then $ a+b$ is
$s(b)$-ghost. When $a \ne b$, this means $a \ge _\nu b$ and $a$ is
an $s(b)$-ghost.

\begin{rem}\label{ghsurp}  If $a \lmodL b$, then clearly $a +c \lmodL b+c$ and $a c \lmodL
bc$. Thus $\lmodL$ respects the \semiring0 operations.
\end{rem}

\begin{rem}\label{Fro0} If $a >_ \nu  b$, then $(a+b)^m = a^m $. Hence, the Frobenius property $(a+b)^m = a^m + b^m$ is satisfied in an
$L$-layered \predomain0 whenever $a \not \nucong b$.
We always have $(a+b)^m \lmodL  a^m + b^m$.
\end{rem}
\subsection{Layered homomorphisms}\label{ghocom}

In line with the philosophy of this paper, we would like to
introduce the category of $L$-layered \predomains0. This entails
finding the correct definition of morphism. We start with the
natural definition from the context of \domains0. Although this definition is good enough for the
purposes of this paper, a more sophisticated analysis would require us
to to consider the notion of ``supervaluation'' from \cite{IzhakianKnebuschRowen2009Valuation}, and how this
relates to morphisms that preserve the layers.
%
Here we take the morphisms in this category to be \semiring0
homomorphisms which respect the order on the sorting  semiring
$L$:

\begin{defn}\label{gsmorph} A \textbf{layered homomorphism}
of  $L$-layered \predomains0 is a map
\begin{equation}\label{eq:layMor} \Phi:=(\vrp,\rho): \RLsnu
\to\RLsnuT \end{equation} such that $\rho:L \to L' $ is a
\semiring0
 homomorphism,
 together with a \semiring0
 homomorphism $\vrp: R \to R'$ such that
\boxtext{
\begin{enumerate}
   \item[M1.] If $\vrp(a) \notin R_0',$ then  $\lv '(\vrp
 (a)) \ge \rho (\lv(a)).$ \pSkip

  \item[M2.]   If  $a \cong
_ {\nu}b$,
 then  $\vrp(a)  \cong _ {\nu} \vrp(b). $
(This is taken in the context of the ${\nu}'_{m',\ell'}$.)
%
\end{enumerate} }

\end{defn}

%

We always denote $\Phi = (\vrp, \rho )$
 as $\Phi: R \to R'$ when unambiguous. In most of the
following examples, the sorting \semirings0 $L$ and~$L'$ are the
same. Accordingly, we call the layered homomorphism $\Phi$ an
$L$-\textbf{homomorphism} when $L = L'$ and $\rho = 1_L.$

%
%

\begin{prop}\label{strt}Any layered
homomorphism $\varphi$ preserves $\nu$, in the following sense:

  If $a \ge _\nu b,$   then $\varphi(a) \ge _{\nu'}
\varphi(b).$ \end{prop}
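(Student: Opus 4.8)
The plan is to reduce the statement, by means of Remark~\ref{Fro0}, to the trivial observation that a \semiring0 homomorphism automatically preserves any equation of the shape $a+b=a$.

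First I would dispose of the degenerate case. If $a\nucong b$, then axiom M2 of Definition~\ref{gsmorph} gives $\varphi(a)\nucong\varphi(b)$ outright, and hence $\varphi(a)\ge_{\nu'}\varphi(b)$. So from now on assume $a\not\nucong b$, in which case $a\ge_\nu b$ amounts to $a>_\nu b$. By Remark~\ref{Fro0}, taking exponent $m=1$, the relation $a>_\nu b$ already yields the \emph{equation} $a+b=a$ in $R$. Since the underlying map $\varphi:R\to R'$ is a \semiring0 homomorphism, applying it gives $\varphi(a)+\varphi(b)=\varphi(a)$ in $R'$.

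It remains to translate this equation back into the order relation $\ge_{\nu'}$. Here I would invoke the layered structure of $R'$: by $\nu$-bipotence $\varphi(a)+\varphi(b)$ is $\nucong$ to $\varphi(a)$ or to $\varphi(b)$, and since it equals $\varphi(a)$ the first alternative holds. To upgrade this to $\varphi(a)>_{\nu'}\varphi(b)$ I would eliminate the remaining possibilities. If $\varphi(a)<_{\nu'}\varphi(b)$, then Remark~\ref{Fro0} with the two elements interchanged forces $\varphi(a)+\varphi(b)=\varphi(b)$, whence $\varphi(a)=\varphi(b)$ and therefore $\varphi(a)\nucong\varphi(b)$, contradicting strictness. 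If $\varphi(a)\nucong\varphi(b)$, then Axiom~B places $\varphi(a)+\varphi(b)$ into the layer $R'_{s'(\varphi(a))+s'(\varphi(b))}$, which is disjoint from the layer $R'_{s'(\varphi(a))}$ in which $\varphi(a)$ lives --- impossible, since the two are equal. Hence $\varphi(a)>_{\nu'}\varphi(b)$, and a fortiori $\varphi(a)\ge_{\nu'}\varphi(b)$.

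The only step that is not pure formal manipulation is this last conversion from $\varphi(a)+\varphi(b)=\varphi(a)$ back to $\varphi(a)\ge_{\nu'}\varphi(b)$: it is precisely where the genuine features of an $L$-layered pre-\domain0 --- disjointness of the layers, Axiom~B, and the $\nu$-bipotence trichotomy packaged with Remark~\ref{Fro0} --- enter, rather than merely the multiplicativity and additivity of $\varphi$. I expect this to be the main (indeed only) obstacle; note in particular that the layer-compatibility axiom M1 is never used, and M2 is used only to handle the degenerate case $a\nucong b$.
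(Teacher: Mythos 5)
Your overall strategy---turning the hypothesis into the additive equation $a+b=a$ via Remark~\ref{Fro0}, pushing it through the \semiring0 homomorphism, and then translating back into the $\nu'$-order---is essentially the paper's, which however does it in one line and with no case split: from $a\ge_\nu b$ one has $a\nucong a+b$ (Axiom~B of Definition~\ref{defn10} when $a\nucong b$, Remark~\ref{Fro0} when $a>_\nu b$), so Axiom~M2 together with additivity of $\varphi$ gives $\varphi(a)\nucong\varphi(a+b)=\varphi(a)+\varphi(b)$, from which $\varphi(a)\ge_{\nu'}\varphi(b)$ is read off. In particular M2 carries the whole argument there, not only the degenerate case as you suggest.

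The genuine flaw is your attempted upgrade to $\varphi(a)>_{\nu'}\varphi(b)$. The elimination of the case $\varphi(a)\nucong\varphi(b)$ is incorrect: Axiom~B places $\varphi(a)+\varphi(b)$ in the layer of sort $s'(\varphi(a))+s'(\varphi(b))$, but this sort need not differ from $s'(\varphi(a))$, because $L$ may contain additively absorbing sorts---for the supertropical sorting set $L=\{1,\infty\}$ one has $\infty+k=\infty$, and Axiom~B itself records that $a+b=a$ when $k=\infty$. Indeed the strict statement is false in general: for instance, with $\tG$ the (multiplicatively written) monoid of nonnegative reals, the map from $\scrR(\{1,\infty\},\tG)$ onto the two-element \semiring0 $\{\rone,e_\infty\}$ sending $\rone\mapsto\rone$ and every other element to $e_\infty$ is checked to be a layered homomorphism, yet for tangible $a>_\nu \rone$ it gives $\varphi(a)=e_\infty\nucong\rone=\varphi(\rone)$; layered homomorphisms preserve $\ge_\nu$ only weakly. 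Moreover, your appeal to $\nu$-bipotence and to the trichotomy $<_{\nu'}$, $\nucong$, $>_{\nu'}$ in $R'$ is unwarranted: Proposition~\ref{strt} concerns layered homomorphisms of layered \predomains0 (Definition~\ref{gsmorph}), and \predomains0 such as polynomial \semirings0 are neither $\nu$-bipotent nor totally $\nu$-ordered, so ruling out two alternatives would not even force the third. Since the proposition only claims the weak inequality, the repair is immediate: once you have $\varphi(a)+\varphi(b)=\varphi(a)$ (or, as in the paper, $\varphi(a)\nucong\varphi(a)+\varphi(b)$), conclude $\varphi(b)\le_{\nu'}\varphi(a)$ directly, noting that in the case you tried to exclude, $\varphi(a)\nucong\varphi(b)$ already yields $\varphi(a)\ge_{\nu'}\varphi(b)$.
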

\begin{proof} $ $
  $\varphi(a) \cong_{\nu'} \varphi(a+b) = \varphi(a)+ \varphi(b),$ implying
$\varphi(a) \ge _{\nu'} \varphi(b).$
\end{proof}

\begin{prop}\label{tangen1} Suppose   $\varphi:  R  \to R'$ is a layered
homomorphism, and  $R$ is tangibly generated (cf.~
Definition~\ref{tangen}). Then $\varphi$ is determined by its restriction to $R_1$, via the formula
$$ \varphi (a+b) =  \varphi(  a) + \varphi(  b), \quad \forall a,b \in R_1.$$\end{prop}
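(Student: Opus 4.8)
The plan is to observe that ``tangibly generated'' forces every element of $R$ to be a finite sum of tangible elements, after which the assertion is just the additivity of the \semiring0 homomorphism $\vrp$, applied to such a decomposition.

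First I would check that $R_1$ is multiplicatively closed and contains $\rone$: this is immediate from Axiom~A1 (giving $\rone \in R_1$) together with Axiom~A2 (giving $R_1 R_1 \subseteq R_{1\cdot 1} = R_1$). Consequently the set $T$ of all finite sums $a_1 + \cdots + a_n$, with $n\in\Net$ and $a_i \in R_1$, is closed under addition, contains $\rone$, and is closed under multiplication, since $\left(\sum_i a_i\right)\left(\sum_j b_j\right) = \sum_{i,j} a_i b_j$ with each $a_i b_j \in R_1$ by distributivity. Thus $T$ is a sub-\semiring0 of $R$ containing $R_1$; conversely every sub-\semiring0 of $R$ containing $R_1$ must contain $T$. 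Hence $T$ is exactly the tangibly generated sub-\semiring0 of Definition~\ref{tangen}, and since $R$ is assumed tangibly generated we get $T = R$.

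Next, given an arbitrary $a \in R$, write $a = a_1 + \cdots + a_n$ with $a_i \in R_1$. Since $\vrp$ preserves addition, induction on $n$ (whose base case $n=2$ is precisely the identity $\vrp(a+b) = \vrp(a)+\vrp(b)$ in the statement) yields $\vrp(a) = \vrp(a_1) + \cdots + \vrp(a_n)$. Therefore the value $\vrp(a)$ is completely determined by the values of $\vrp$ on $R_1$, which is the assertion that $\vrp$ is determined by its restriction to $R_1$. Note that the displayed formula itself holds for all $a,b \in R$, being just the two-summand case of additivity of the \semiring0 homomorphism $\vrp$; its role here is to recover $\vrp$ on all of $R$ once $\vrp|_{R_1}$ is known.

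The one point deserving care is the reduction ``sub-\semiring0 generated by $R_1$ $=$ finite sums of elements of $R_1$,'' which is what makes the argument work and which rests on $R_1$ being closed under multiplication and containing $\rone$; beyond that there is no real obstacle, and in particular neither the auxiliary map $\rho$ nor the conditions M1 and M2 enter the proof — only that $\vrp$ is additive, multiplicative and unital.
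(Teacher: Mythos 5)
Your proof is correct and follows essentially the same route as the paper: the paper invokes Lemma~\ref{gen1} (the additive sub-semigroup generated by the submonoid $R_1$ is already a sub-\semiring0) to conclude that every element of $R$ is a finite sum of tangibles, and then uses additivity of $\varphi$, which is exactly your argument with Lemma~\ref{gen1} reproved inline via Axioms~A1, A2 and distributivity. Your closing remark that only additivity, multiplicativity and unitality of $\varphi$ are needed (not M1, M2 or $\rho$) is consistent with the paper's one-line proof.
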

\begin{proof} It is enough to check sums, in view of
Lemma \ref{gen1}. We get the action of $\varphi$ on all of $R$ since $R_1$  generates $R$.
\end{proof}

  \begin{rem} The definition given here of layered homomorphism is
 too strict for some applications. One can weaken the definition of layered
homomorphism  by utilizing the surpassing relation, requiring
merely that $ \varphi(a)+ \varphi(b)  \lmodL \varphi(a+b),$
 but various technical difficulties arise, so we defer the study of this
category to \cite{IKR6}.
\end{rem}

Before continuing, let us see how this definition encompasses
various prior tropical situations.

\begin{example}\label{exmp:layredStr} We assume throughout that
$R$ is an $L$-layered pre-\domain0.
\begin{enumerate}
\eroman   \ddispace \item In the max-plus situation, when $L = \{
1 \},$ $\rho$ must be the identity, and $\Phi$ is just a
\semiring0 homomorphism. \pSkip

\item In the ``standard supertropical situation,'' when $L = \{ 1,
\infty \},$ $\Phi$~must send the ghost layer $R_\infty$ to~
$R_\infty$. If $\mfa \triangleleft R,$ one could take $R'_1 :=
R\setminus \mfa$ and $R'_\infty :=  R_\infty \cup \mfa$. The
identity map is clearly a layered homomorphism; its application
``expands the ghost ideal'' to~$\mfa,$ thereby taking the place of a
 semiring homomorphism to the factor \semiring0. \pSkip

%

\item Generalizing (ii), we obtain layered homomorphisms by
modifying the layering. We say a \textbf{resorting map} of a
uniform $L$-layered pre-\domain0 $R$ is a map $s': R \to L$
satisfying the following properties:
\medskip

\begin{enumerate} \eroman \item $s'(\one_R) = 1,$
\pSkip

 \item $s'(R_1) \subseteq L_{\ge 1},$ \pSkip

\item $s'(ab) = s'(a)s'(b),   \ \forall a,b \in R_1,$  \pSkip

\item $s'(e_\ell a) = \ell s'(a), \ \forall a   \in R_1.$
\end{enumerate}
\medskip

Then the following properties also are satisfied:
\medskip

\begin{enumerate}\eroman \item $s'(e_\ell) = \ell$ for
all $\ell \in L.$

\pSkip

 \item $s'(ab) \ge s'(a)s'(b)$ for all $a,b \in R.$

\pSkip
 \item
$s'(a) \ge s(a), \ \forall a  \in R.$\end{enumerate}
\medskip

To see this, take $a \in R_k$ and $b \in R_\ell,$ and write $a =
e_k a_1$ and $b = e_\ell b_1$ for $a_1, b_1 \in R_1$. Then
 $$s'(e_\ell) =
s'(e_\ell \rone) = \ell s'(\rone) = \ell\cdot 1 = \ell$$ Taking
$c_1 = a_1b_1 \in R_1,$ we have $$s'(ab) \ds = s'( e_k e_\ell c_1)
\ds = k \ell s'(c_1) = k \ell     s'(a_1)s'(b_1) = s'(a)s'(b),$$
and $s'(a) = s'(e_k a_1 ) = k s'(a_1) \ge k.$
\pSkip

\item The natural injections $R_{\ge 1 } \to R$ and   $\{ \bigcup_\ell {R_\ell: \ell \in \Net} \} \to
R$
  are all examples of layered
homomorphisms.
\pSkip

\item The $L$-truncation map of
\cite[\S3]{IzhakianKnebuschRowen2009Refined} is a layered
homomorphism.
\pSkip

\item Suppose $R$ is a layered pre-\domain0. We adjoin $\infty$ to
$L$, and take $R_\infty$ and $\nu _{\infty, k}$ to be the direct
limit of the $R_k$ and $\nu_{\ell,k}$, and write $\nu$ for the
various $\nu _{\infty, k}.$ An element in $a \in R_1$ is
$\nu$-\textbf{non-cancellative} if $ ab \nucong ac$ for suitable
$b,c,$ where $b\not \nucong c.$ We define the map $\varphi: R \to
R$ which is the identity on $\nu$-cancellative elements but
$\varphi(a) = a^\nu$ for all $\nu$-non-cancellative elements $a
\in R.$ In particular, $\varphi(R)_1$ is comprised precisely of
the $\nu$-cancellative tangible elements.

We claim that $\varphi$ is a homomorphism. If $ab$ is  $\nu$-cancellative this is clear, so we may assume that $a$ is $\nu$-non-cancellative. Then
 $$\varphi(ab) =  (ab)^\nu = a^\nu \varphi(b) = \varphi(a)\varphi(b)    .$$
Certainly $\varphi(a+ b) = \varphi(a)+\varphi(b) $ by bipotence
unless $a \nucong b,$ in which case $$\varphi(a+ b) =
\varphi(a^\nu) = a^\nu = \varphi(a)+\varphi(b)  .$$

Furthermore, $\varphi(R)$ is a layered pre-\domain0 which is
$\nu$-cancellative with respect to $\varphi(R)_1$. Indeed, if $
\varphi(a)\varphi(b) \nucong \varphi(a)\varphi(c)$ with  $
\varphi(a)\in \varphi(R)_1$,
 then $ \varphi(b) \nucong
\varphi(c).$

Note that this is not the same example used in \cite{IKR5}. \end{enumerate}
\end{example}

Our main example for future use is to be given in
Remark~\ref{Fun2}.

\section{The layered categories and the corresponding tropicalization functors}\label{supfun}

Having assembled the basic concepts, we are finally ready for the
layered tropical categories. Our objective in this section is to
introduce the functor that passes from the ``classical algebraic
world'' of integral domains with valuation to the ``layered
world,''
  taking the cue from
\cite[Definition~2.1]{IzhakianRowen2007SuperTropical}, which we
recall and restate more formally.

Here are our first main layered categories, starting with the more
encompassing and proceeding to the specific. In each case the
morphisms are the relevant
 layered homomorphisms.

\begin{defn}\label{somecat} $ $
\begin{enumerate}
 \ddispace \ealph

\item  $\QLayPreD$ is   the  category whose objects are layered
pre-\domains0. \pSkip
%

\item  $\LayDom$ is   the full subcategory of  $\QLayPreD$ whose
objects are layered  \bidomains0. \pSkip

\item  $\ULayDom$ is   the  full subcategory of  $\QLayPreD$ whose
objects are uniform  layered  \bidomains0.

\end{enumerate}
\end{defn}

%
%
\subsection{Identifications of categories of monoids and layered
\predomains0}

\begin{rem}\label{forg2}  We   define the forgetful functor $  \ULayDom\to \SOMon$ given by
sending any uniform $L$-layered \bidomain0 $R: =  \scrR(L,\tG)$ to
$R_1.$
\end{rem}

We want retracts for this forgetful functor. By
 Proposition  \ref{tangen1}, any layered homomorphism corresponds to a homomorphism of
the underlying  monoid of tangible elements, thereby indicating an
identification between categories arising from the construction of
 layered \bidomains0 from  pre-ordered monoids.

\begin{thm}\label{fun1} There is a faithful  \textbf{layering functor}  $$\tFlay : \SOMon \To
\ULayDom,$$ given by sending $\tG$ to $\scrR(L,\tG)$, and sending
the ordered homomorphism $\vrp: \tG \to \tG'$ to the layered
homomorphism $\scrR(L,\tG)\to \scrR(L,\tG')$ induced by $\vrp.$
The functor $\tFlay$ is a left retract of the forgetful functor of
Remark~\ref{forg2}.
\end{thm}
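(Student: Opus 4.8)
The plan is to verify that $\tFlay$ is a well-defined functor, that it is faithful, and that it splits the forgetful functor $U\colon\ULayDom\to\SOMon$ of Remark~\ref{forg2}; faithfulness will in fact fall out of the splitting. On objects there is essentially nothing to do: for a cancellative ordered monoid $\tG$, the set $\scrR(L,\tG)$ with layers $R_\ell=\{\ell\}\times\tG$ and the bijective sort transition maps $\nu_{m,\ell}\colon\xl{a}{\ell}\mapsto\xl{a}{m}$ is a uniform $L$-layered \bidomain0 — Axioms A1--A4 and~B, multiplicative cancellativity, and bijectivity of the $\nu_{m,\ell}$ are recorded in Construction~\ref{defn50} and the example following the definition of ``uniform''. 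So put $\tFlay(\tG):=\scrR(L,\tG)$.

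On a morphism $\vrp\colon\tG\to\tG'$ of $\SOMon$ I would set $\tFlay(\vrp):=(\widetilde{\vrp},\id_L)$, where $\widetilde{\vrp}(\xl{a}{\ell}):=\xl{\vrp(a)}{\ell}$. To see that this is a layered homomorphism one must check that $\widetilde{\vrp}$ is a \semiring0 homomorphism and that conditions M1 and M2 of Definition~\ref{gsmorph} hold. Multiplicativity of $\widetilde{\vrp}$ and preservation of $\rone$ are immediate from \eqref{13} and the fact that $\vrp$ is a monoid homomorphism; M1 is trivial, since the sorting map is preserved on the nose and $\rho=\id_L$; and M2 holds because in the uniform \bidomain0 $\scrR(L,\tG)$ the relation $a\nucong b$ says precisely that the $\tG$-coordinates of $a$ and $b$ agree, a property $\widetilde{\vrp}$ plainly respects. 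Once $\widetilde{\vrp}$ is known to be a \semiring0 homomorphism, functoriality ($\tFlay(\id_\tG)=\id$ and $\tFlay(\psi\circ\vrp)=\tFlay(\psi)\circ\tFlay(\vrp)$) is immediate from the defining formula.

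The step I expect to be the real obstacle is the remaining piece of the previous paragraph: that $\widetilde{\vrp}$ preserves addition, which one tests against the three cases of \eqref{14}. The cases $a>b$ and $a<b$ need only the monotonicity $\vrp(a)\ge\vrp(b)$, resp.\ $\vrp(a)\le\vrp(b)$; the case $a=b$ uses $\vrp(a)=\vrp(b)$ together with Axiom~A4, so that $\xl{a}{k}+\xl{b}{\ell}$ maps to $\xl{\vrp(a)}{k+\ell}$ on both sides. The delicate subcase is $a<b$ with $\vrp(a)=\vrp(b)$ — which can occur, since a morphism of $\SOMon$ need not be strictly order-preserving — where the left side maps to $\xl{\vrp(b)}{\ell}$ while the image-sum $\xl{\vrp(a)}{k}+\xl{\vrp(b)}{\ell}$ equals $\xl{\vrp(b)}{k+\ell}$, one layer too high. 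Here one must either restrict $\SOMon$ to its wide subcategory of strictly order-preserving homomorphisms, or pass to the surpassing-relation notion of layered homomorphism indicated in the Remark after Proposition~\ref{tangen1} (the variant deferred to \cite{IKR6}), under which one asks only that $\widetilde{\vrp}(x)+\widetilde{\vrp}(y)\lmodL\widetilde{\vrp}(x+y)$ — and this does hold, since $\xl{\vrp(b)}{k+\ell}\nucong\xl{\vrp(b)}{\ell}$ with the former a ghost of the appropriate sort.

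Finally, for faithfulness and the retract statement it is enough to exhibit the splitting. First make $U\colon\ULayDom\to\SOMon$ explicit on morphisms: given a layered homomorphism $(\vrp,\rho)\colon R\to R'$ and $a\in R_1$, let $U(\vrp,\rho)(a)$ be the unique element of $R_1'$ that is $\nucong\vrp(a)$, which exists by Lemma~\ref{tang11}; this map is multiplicative by Axiom~A3 and order-preserving by Proposition~\ref{strt}, and on objects $U$ sends $R$ to $R_1$ as in Remark~\ref{forg2}. Then $U\circ\tFlay=\id_{\SOMon}$: on objects $\scrR(L,\tG)_1=\{1\}\times\tG$ is canonically $\tG$, and on a morphism $\vrp$ one has $\widetilde{\vrp}(\xl{a}{1})=\xl{\vrp(a)}{1}\in R_1'$, so that $U(\tFlay(\vrp))=\vrp$ under this identification. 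Faithfulness of $\tFlay$ then comes for free: $\tFlay(\vrp)=\tFlay(\psi)$ forces $\vrp=U\tFlay(\vrp)=U\tFlay(\psi)=\psi$; and $U\circ\tFlay=\id_{\SOMon}$ is exactly the statement that $\tFlay$ is a left retract of the forgetful functor $U$.
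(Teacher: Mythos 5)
Your proposal is correct and follows essentially the same route as the paper: define $\tFlay$ componentwise via $\xl{a}{\ell}\mapsto\xl{\vrp(a)}{\ell}$, verify the axioms making $\scrR(L,\tG)$ a uniform $L$-layered \bidomain0, and deduce faithfulness from the splitting $U\circ\tFlay=\id_{\SOMon}$. The one point where you go beyond the paper is the ``delicate subcase'' you flag: when $a<b$ but $\vrp(a)=\vrp(b)$ (which is possible, since morphisms of $\SOMon$ need not be strictly order-preserving), strict additivity of $\widetilde{\vrp}$ fails, the left side landing in layer $\ell$ and the image-sum in layer $k+\ell$. You are right that this is a genuine issue, and it is instructive that the paper's own verification quietly concedes it: the displayed computation in the paper's proof establishes only
$$\tFlay\vrp\bigl(\xl{a}{k}+\xl{b}{\ell}\bigr)\ \nucong\ \vrp\bigl(\xl{a}{k}\bigr)+\vrp\bigl(\xl{b}{\ell}\bigr),$$
together with the sort inequality $s'(\cdots)\ge k$ --- that is, exactly the weakened, surpassing-style condition you propose as one of your two repairs, rather than the on-the-nose equality of \semiring0 homomorphisms demanded by Definition~\ref{gsmorph}. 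So one of your fixes (restricting to strictly order-preserving morphisms, or reading ``layered homomorphism'' in the $\lmodL$ sense deferred to \cite{IKR6}) is needed to make the statement literally correct, and the paper implicitly takes the second. Your explicit description of the forgetful functor on morphisms via Lemma~\ref{tang11}, and the derivation of faithfulness from $U\circ\tFlay=\id_{\SOMon}$, match the paper's much terser argument.
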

\begin{proof} The image of a cancellative ordered monoid $\tG$ is a layered
\bidomain0, in view of
\cite[Proposition~2.3]{IzhakianKnebuschRowen2009Refined}, and one
sees easily that $\tFlay \vrp $ is a layered morphism since, for
$a \ge_\nu b,$
$$\tFlay \vrp ( \xl{a }{k } +  \xl{b}{\ell })
\nucong \tFlay \vrp ( \xl{a }{k }) \nucong \vrp ( \xl{a }{k
})+\vrp ( \xl{b }{\ell }),$$ and $s'(\tFlay \vrp ( \xl{a }{k } +
\xl{b}{\ell }))\ge k.$

Also, the morphisms match. The functor $\tFlay$ is faithful, since
one recovers the original objects and morphisms by applying the
forgetful functor of Remark~\ref{forg2}.
\end{proof}

More subtly, at times we want to forget the order on our monoids,
 to apply the theory of \cite{CHWW}. Even so, we have a universal construction
 with respect to ``universal characteristic.''

\begin{example}\label{lay0}
 Given a cancellative monoid $\tG$ and a partially  ordered semiring $L$, define the \semiring0
 $U_L(\tG)$ as follows:

Each element of $U_L(\tG)$ is a formal sum of elements of $\tG$,
each supplied with its layer, i.e., has the form
$$ \bigg \{ \sum _{a\in S} \xl{a }{\ell _a }: S \subset \tG\bigg
\}.$$  Addition is given by the
rule
\begin{equation}\label{B1}
\sum _{a\in S} \xl{a }{\ell _a} + \sum _{a\in S'} \xl{a }{\ell _a
'} = \sum _{a\in S\cup S'} \xl{a }{\ell _a + \ell _a'}
.\end{equation} Here we formally define $\ell _a + \ell _a'$ to be
$\ell _a$ (resp.~$\ell _a'$) if $a \notin S'$ (resp.~if $a \notin
S$).

Multiplication is given by

\begin{equation}\label{133}   \xl{a}{k} \cdot  \xl{b}{\ell} =
\xl{(ab)}{k\ell},\end{equation}  extended via distributivity.
\end{example}

We want  $U_L(\tG)$ to be the universal to the forgetful functor
of Corollary~\ref{fun1}. This is ``almost'' true, with a slight
hitch arising from \eqref{B1}.

 \begin{prop}\label{lay01} Given any monoid
   bijection $\vrp: \tM \to \tG$ where  $\tG : = (\tG, \cdot \, , \leq, \one_\tG )$ is a
    totally ordered monoid, viewed as a bipotent \semiring0 as in Proposition~\ref{maxplus1}, there is a
    natural homomorphism $$\htvrp: U_L(\tM) \to \scrR(L,\tG)$$ given by
    $$\htvrp \bigg(\sum_{a \in S} \xl{a }{\ell _a } \bigg) = \sum_{a \in S} \xl{ \vrp (a_1)}{\ell _a }.$$
    The composite \begin{equation*}
    \xymatrix{
  \tM    \ar[r]^{\vrp}&   \tG \ar[r] &\scrR(L,\tG).\\
}
\end{equation*}
   also factors naturally as
\begin{equation*}
    \xymatrix{
  \tM    \ar[r] & (U_L(\tM), \cdot \; )  \ar[r]^{\htvrp} & \scrR(L,\tG).\\
}
\end{equation*}

    In case $L = \{ 1\}$ (the max-plus setting) or $L = \{ 1 , \infty \}$
    (the standard supertropical setting),
    the previous assertion holds more generally for any monoid homomorphism  $\vrp: \tM \to \tG$.
    \end{prop}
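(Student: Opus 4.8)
The plan is to verify the universal property directly from the explicit descriptions of the operations in $U_L(\tM)$ (Example~\ref{lay0}) and in $\scrR(L,\tG)$ (Construction~\ref{defn50}). First I would define the map $\htvrp$ on generators: a single layered element $\xl{a}{\ell}$ of $U_L(\tM)$ is sent to $\xl{\vrp(a)}{\ell}$, and a general formal sum $\sum_{a\in S}\xl{a}{\ell_a}$ is sent to $\sum_{a\in S}\xl{\vrp(a)}{\ell_a}$, where the right-hand sum is now computed using the addition~\eqref{14} of $\scrR(L,\tG)$. One must check this is well-defined, but since elements of $U_L(\tM)$ are formal sums of \emph{distinct} elements of $\tM$ and $\vrp$ is a bijection, the images $\vrp(a)$ are distinct, so no collapsing ambiguity arises; the value of the sum in $\scrR(L,\tG)$ is then unambiguous once we fix the convention in~\eqref{14} for equal $\tG$-components (which never occurs here because the $\vrp(a)$ are distinct).

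Next I would check that $\htvrp$ respects multiplication and addition. For multiplication: by~\eqref{133} in $U_L(\tM)$ we have $\xl{a}{k}\cdot\xl{b}{\ell}=\xl{ab}{k\ell}$, and applying $\htvrp$ gives $\xl{\vrp(ab)}{k\ell}=\xl{\vrp(a)\vrp(b)}{k\ell}$ since $\vrp$ is a monoid homomorphism; this equals $\xl{\vrp(a)}{k}\cdot\xl{\vrp(b)}{\ell}$ by~\eqref{13}, and the general case follows by distributivity. For addition I would use~\eqref{B1}: given $\sum_{a\in S}\xl{a}{\ell_a}+\sum_{a\in S'}\xl{a}{\ell_a'}=\sum_{a\in S\cup S'}\xl{a}{\ell_a+\ell_a'}$, applying $\htvrp$ yields $\sum_{a\in S\cup S'}\xl{\vrp(a)}{\ell_a+\ell_a'}$. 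On the other side, $\sum_{a\in S}\xl{\vrp(a)}{\ell_a}+\sum_{a\in S'}\xl{\vrp(a)}{\ell_a'}$ computed in $\scrR(L,\tG)$ via~\eqref{14}: for each $b\in\vrp(S)\cup\vrp(S')$ we collect the term of highest $\tG$-value, and whenever the same $b=\vrp(a)$ appears in both sums (i.e.\ $a\in S\cap S'$) rule~\eqref{14} adds the layers, giving exactly $\xl{b}{\ell_a+\ell_a'}$; for $a$ in only one of $S,S'$ the layer convention of~\eqref{B1} matches. Since distinct elements $a$ of $S\cup S'$ may still have $\vrp(a_1)<\vrp(a_2)$, one term may dominate another in $\scrR(L,\tG)$ — but this is exactly the behaviour already reflected in $\scrR(L,\tG)$, and the two computations agree because both sides simply record ``add the layers on the diagonal.'' Then the factorization $\tM\to(U_L(\tM),\cdot\,)\xrightarrow{\htvrp}\scrR(L,\tG)$ of $\vrp$ followed by the embedding $\tG\hookrightarrow\scrR(L,\tG)$, $a\mapsto\xl{a}{1}$, is immediate: $\tM\to U_L(\tM)$ sends $a$ to $\xl{a}{1}$ (using Axiom~A1, $\rone\in R_1$, so the natural layer is $1$), and $\htvrp$ sends this to $\xl{\vrp(a)}{1}$.

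For the last sentence, when $L=\{1\}$ or $L=\{1,\infty\}$, I would observe that the only obstruction to dropping the bijectivity hypothesis is the clash in~\eqref{B1}: if $\vrp$ is merely a homomorphism, two distinct $a_1\ne a_2$ in $S$ may have $\vrp(a_1)=\vrp(a_2)$, and then in $\scrR(L,\tG)$ the sum $\xl{\vrp(a_1)}{\ell_{a_1}}+\xl{\vrp(a_2)}{\ell_{a_2}}$ becomes $\xl{\vrp(a_1)}{\ell_{a_1}+\ell_{a_2}}$, so $\htvrp$ is still well-defined as long as addition of layers in $L$ is the right bookkeeping. For $L=\{1\}$ this is vacuous: there is only one layer, $1+1=1$ (additive idempotence built into the max-plus identification of Proposition~\ref{maxplus1}), so the image never leaves $R_1$ and everything is consistent. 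For $L=\{1,\infty\}$ we have $1+1=\infty$ and $\infty+k=\infty$, which is precisely Axiom~B for the standard supertropical structure; so whenever two tangible terms coincide under $\vrp$ the sum lands in $R_\infty$, matching~\eqref{B1}, and one checks directly that multiplication and addition are still respected. The main obstacle I anticipate is the addition verification in the general (bijective) case: carefully matching the layer-addition convention of~\eqref{B1} against the case analysis in~\eqref{14} when several distinct monoid elements have comparable (but unequal) $\tG$-values, so that some summands are absorbed; the bookkeeping must be done element-by-element over $S\cup S'$ to confirm the two prescriptions agree. This is routine but is the one place where the formal-sum structure of $U_L(\tM)$ genuinely differs from $\scrR(L,\tG)$, which is why the universal property is only ``almost'' an isomorphism onto its image.
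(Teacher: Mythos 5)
Your proof is correct and takes essentially the same route as the paper, whose entire argument is that ``the multiplication rules match, so the verifications follow formally''; you have simply supplied the element-by-element bookkeeping for well-definedness, additivity via \eqref{B1} versus \eqref{14}, and multiplicativity that the paper leaves implicit. The one divergence is the final assertion: the paper justifies it by the Frobenius property $(a+b)^m=a^m+b^m$ being an identity (even for $\nu$-equivalent elements) when $L=\{1\}$ or $L=\{1,\infty\}$, whereas you argue directly from the additive arithmetic of the sorting semiring ($1+1=1$, resp.\ $1+1=\infty$ with $\infty+k=\infty$) — but these are two faces of the same fact, since the Frobenius identity in $\scrR(L,\tG)$ for $\nu$-equivalent summands is precisely the statement that the layer sums collapse in this way.
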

\begin{proof} The multiplication rules match, so the verifications
follow formally, cf.~Remark~\ref{Fro0}. The last assertion is true
because in these particular situations the Frobenius property  is
an identity, holding even when $a \nucong  b.$
 \end{proof}

\begin{rem} Since the Frobenius property is an identity, one could
just mod it out from our construction of $ U_L(\tM)$ utilizing
Remark~\ref{Pass00}, and thus get a universal with respect to
satisfying the Frobenius property.
\end{rem}


\subsection{The layered tropicalization functor}\label{RedTro}

Having our categories in place, we can get to the heart of
tropicalization.

\begin{defn}\label{def:Tfunctor} Given a \semiring0 $L$, the $L$-\textbf{tropicalization functor}
$$\TropfunoneL :  \SValmon \To \ULayDom$$ from the category of valued
monoids (with cancellation in the target) to the category of
uniform layered \bidomains0 is defined as follows: $\TropfunoneL:
(\tM,\tG,v) \mapsto \scrR(L,\tG) $ and
 $\TropfunoneL: \phi \mapsto  \al_\phi ,$   where given a morphism $\phi :  (\tM,\tG,v)  \to  (\tM',\tG',v') $ we define
 $\al_\phi : \scrR(L,\tG) \to \scrR(L,\tG') $, by \begin{equation}\label{morph20}
\al_\phi (\xl{a }{\ell }) := \xl{\phi(a)}{\ell }, \qquad a \in
\tG,
\end{equation}
cf. Formula  \eqref{eq:valMonMor}.
\end{defn}
  We also consider $\TropfunoneL$
as acting on individual elements of $\tM$, whereby
\begin{equation}\label{morph201}\TropfunoneL(a) = \xl{v(a)}{1}.\end{equation}
This is indeed a functor, in view of
\cite[Theorem~4.9]{IzhakianKnebuschRowen2009Refined}.

Note that the tropicalization functor $\TropfunoneL$  factors as
$ \SValmon \to \SOMon \to \ULayDom $.

\subsection{More comprehensive layered tropicalization
functors}\label{FulTro}

The basic layered tropicalization functor only recognizes the
image in~ $\tG$, so loses much information about the original
monoid $\tM$. In analogy to \cite{Par}, in order to preserve
information, we can encode extra information, motivated by the
residue field in valuation theory.

\subsubsection{The unit tropicalization
functor}\label{FulTro1}

\begin{defn}\label{units}
Given a monoid $\tM := (\tM, \cdot \, , \one_\tM)$ with
m-valuation $v: \tM \to \tG$, we define its \textbf{unit
submonoid}
$$\tM_{(\one)} := \{a \in \tM: v(a) = \one_\tG\},$$
the submonoid of $\tM$ on which the restriction of $v$ is the
trivial valuation. \end{defn}

When $\tM$ is a group, then $\tM_{(\one)}$ also is a group. In
particular, the category $\Valmon_{(\one)}$ of unit monoids with
m-valuation is a full
subcategory of the category $\Valmon$. 
%
%
%
%
%
%

\begin{example} In Example \ref{Pass111},  $\mathbb C_{(\one)}$ is the complex unit circle.
\end{example}

%
%
%
%
%

The following observation is now clear.

\begin{prop}\label{fullfun}
There is a functor $$\Tropfuntwo :  \SValmon   \To   \ULayDom
\times {\Valmonone} ,$$ given  as follows: $\Tropfuntwo
((\tM,\tG,v)) = (\scrR(L,\tG), \tM_{(\one)}) $ and
 $\Tropfuntwo(\phi) = (\al_\phi , \phi|_{\tM_{(\one)}}),$ where the morphism $\al_\phi :
\scrR(L,\tG) \to \scrR(L',\tG ') $ is
 given by Equation~\eqref{morph20}.
 \end{prop}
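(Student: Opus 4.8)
The plan is to exhibit $\Tropfuntwo$ as the functor into the product category $\ULayDom \times \Valmonone$ whose two components are the already-established tropicalization functor $\TropfunoneL$ of Definition~\ref{def:Tfunctor} and a ``unit part'' functor $\tF_{(\one)}: \SValmon \to \Valmonone$. Since the target is a product category, a pair of assignments defines a functor into it as soon as each coordinate is a functor; the first coordinate, $\phi \mapsto \al_\phi$, is known to be a functor (Definition~\ref{def:Tfunctor}, via \cite[Theorem~4.9]{IzhakianKnebuschRowen2009Refined}), so the only thing left to check is the second.

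For $\tF_{(\one)}$ on objects, given $(\tM,\tG,v)$ in $\SValmon$ I would take $\tF_{(\one)}((\tM,\tG,v)) := \tM_{(\one)}$ as in Definition~\ref{units}, viewed as an m-valued monoid via the restriction $v|_{\tM_{(\one)}}$, whose image is the trivial submonoid $\{\one_\tG\}$; this lies in $\Valmonone$ precisely because $\Valmonone$ is the full subcategory of $\Valmon$ consisting of triples on which the valuation is trivial, and $\tM_{(\one)}$ is genuinely a submonoid since $v(\one_\tM) = \one_\tG$ and $v(ab) = v(a)v(b)$ force it to contain $\one_\tM$ and to be closed under multiplication. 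For $\tF_{(\one)}$ on morphisms, given $\phi = (\phi_\tM,\phi_\tG): (\tM,\tG,v) \to (\tM',\tG',v')$, the one genuinely substantive point is that $\phi_\tM$ carries $\tM_{(\one)}$ into $\tM'_{(\one)}$: this is immediate from the compatibility condition~\eqref{comm1}, since $v(a) = \one_\tG$ gives $v'(\phi_\tM(a)) = \phi_\tG(v(a)) = \phi_\tG(\one_\tG) = \one_{\tG'}$ because $\phi_\tG$ is a monoid homomorphism. Hence $\phi_\tM|_{\tM_{(\one)}}$ is a morphism $\tM_{(\one)} \to \tM'_{(\one)}$ in $\Valmonone$ (order-preservation and compatibility over the trivial targets being automatic), and setting $\tF_{(\one)}(\phi) := \phi_\tM|_{\tM_{(\one)}}$ clearly preserves identities and composites, since restriction of maps is strictly functorial.

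Combining the two coordinates, $\Tropfuntwo(\phi) = (\al_\phi, \phi|_{\tM_{(\one)}})$ then respects identities and composition, so $\Tropfuntwo$ is a functor. I do not expect a real obstacle here — this is the sense in which the proposition is ``now clear'' — and the only subtlety worth spelling out explicitly is the use of~\eqref{comm1} to see that the restriction of $\phi_\tM$ lands in the unit submonoid of the target, together with a word fixing the triple structure carried by $\tM_{(\one)}$ (namely $v|_{\tM_{(\one)}}$ with trivial target $\{\one_\tG\}$).
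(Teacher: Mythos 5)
Your proposal is correct and follows essentially the same route as the paper, whose proof is simply "we piece together the two functors"; you have just spelled out the verification (in particular the use of~\eqref{comm1} to see that $\phi_\tM$ restricts to a map $\tM_{(\one)} \to \tM'_{(\one)}$) that the paper leaves implicit.
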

\begin{proof} We piece together the two functors.
\end{proof}

This functor could be interpreted as separating the m-valuation
$v$ into two components, corresponding to the value monoid and the
residue domain.   Tropicalization in its original form involved taking the logarithm of the absolute value of $re^{i\theta}$, which is just $\log|r|.$ Thus, the argument  $e^{i\theta}$ is lost, and researchers dealt with that separately. Since these all have absolute value 1, it seems appropriate in the valuation-theory analog to have $\Tropfuntwo$ at our disposal.

A more direct approach in the terminology of
Remark~\ref{terminology0}: Given two Puiseux series $p,q \in
\mathbb K$ with $ \nVal(p) = \nVal(q)$, we see that $\Val(p) =
\Val (q)$ iff $\Val (p q^{-1}) = \Val(1) = \one_K,$ i.e.,
$pq^{-1}-1$ is in the valuation ideal of the valuation $\nVal$.
 Thus, Proposition~\ref{fullfun} gives us a way of
understanding $\Tropfuntwo $ in terms of $\nVal$. Namely, we check
whether two Puiseux series have the same lowest order exponent,
and then can check whether their lowest coefficients are the same
by means of
the residue field.

\begin{rem}\label{cos1}
Suppose $W$ is an arbitrary integral domain with valuation $v: W \setminus \{ \zero_W \} \to
\tG$, with valuation ring $R$ and residue domain $\bar W$.  Take
the unit submonoid
 $W_1$ of $W$, cf.~Definition~\ref{units}. 
Clearly $$W_1 = \{ r \in R: r+ \mfp = 1+\mfp\}.$$

When $W$ is a field, $W_1$ is a multiplicative subgroup of $W$
which could be thought of as the ``first congruence subgroup'' in
valuation theory. Then, for $b\ne 0,$  $aW_1 = b W_1$ iff $v(a) =
v(b)$ and $1 - a b^{-1}\in \mfp,$  which relates to the condition
of the previous paragraph.
 \end{rem}

\subsubsection{The exploded tropicalization
functor}\label{FulTro1}

One could preserve  more information, according to
Parker~\cite{Par}, who introduced ``exploded'' tropical
mathematics, and Payne~\cite{Payne09}. This entails taking the
leading coefficient of Puiseux series.

E.~Sheiner introduced a related structure $\mathcal R(K, \Real)$
on Puiseux series, in which he uses the residue field ~$K$ as the
sorting set.  Define the map $K \to \mathcal R(K, \Real)$ by $p
\mapsto \xl{v(p)}{\alpha}$ where $\alpha$ is the coefficient of
the lowest monomial of the Puiseux series $p$. This map,
generalizing the Kapranov map, keeps track of the ``leading
coefficient'' of the Puiseux series $p$ in terms of when the image
of $p$ has layer 0.

From this perspective, the $\zero_K$ layer represents the ``corner
ghosts.'' Thus, Sheiner has ``exploded'' the notion of valuation,
and it is not difficult to define the ``exploded functor'' and
transfer the statement and proof of Payne~\cite{Payne09} to this
context, to be indicated in \S\ref{Kapr}. Let us describe this
procedure in algebraic terms,  which means working in the
associated graded algebra.

\begin{defn}
Given a valued monoid $v:\tM \to \tG$, and $g \in \tG,$ we
write $ \tM_{\ge g}$ for the $\tM$-module
$\{ a \in \tM: v(a) \ge g\}$, and $\tM_{>g}$ for its submodule $\{ a
\in \tM: v(a)
> g\}.$

When $\tM$ is the multiplicative monoid of an integral domain $W$,
 we can define the \textbf{associated graded algebra}
$$\gr(W) : = \bigoplus _{a\in \tG} W_{\ge g}/W_{>g},$$ where operations
are given by $$(a+ W_{>g})(b+ W_{>h})= ab+ W_{>gh}\qquad (a+ W_{>g})+(b+ W_{>h})= a+b+ W_{>gh}.$$
\end{defn}

\begin{rem} It is well known that the associated graded algebra is an algebra,
with the natural valuation ~$\hat v$ induced by $v$, i.e., $\hat
v(a +  W_{>g}) = v(a).$ When the valuation $v$ is discrete, each
component $W_{\ge g}/W_{>g}$ is (multiplicatively)  isomorphic to
$\bar W$.
\end{rem}

Let us interpret ``explosion'' with respect to Puiseux series. For
any real number $\al$, the component $ K_{\ge \al}/K_{>\al}$ can be identified with $K t^\al,$ which as a module
is isomorphic to $K$, by means of taking the coefficient of
the monomial of lowest order in a Puiseux series.


%
%

%
 \begin{defn}\label{cos2} Notation as in Remark~\ref{cos1},
 define the  \textbf{exploded} layered \domain0 $\mathcal R(\bar W, \tG)$. In other words,
 we sort the elements according to $\bar W$,  with multiplication following the given multiplication in $\tG$
 and addition given by the following rules:

\begin{equation}\label{A14}
  \xl{  x}{\hat a}+   \xl{ y}{\hat b} =\begin{cases}   \xl{  x}{\hat a} & \quad\text{if} \quad x >
y ,\\   \xl{ y}{\hat b}  & \quad\text{if} \quad x <
y, \\
 \xl{x}{\hat a + \hat b}  & \quad\text{if} \quad x =
y.\end{cases}\end{equation}
\end{defn}

\begin{rem}\label{cos30} Note that addition here is the classical addition induced from the integral domain
$W$, so although this structure has a tropical aroma, it does
preserve some of the original algebraic structure of the residue
domain $\bar W$.
\end{rem}

\begin{prop}\label{fullfun1}
There is a functor $$\Tropfunexp : \Valdom   \To   \ULayDom \times
\Ring ,$$ given  as follows: $\Tropfunexp ((W,\tG,v)) =
(\scrR(L,\tG), \bar W) $ and
 $\Tropfunexp(\phi) = (\al_\phi , \overline{\phi}),$ where the morphism $\al_\phi:
\scrR(L,\tG) \to \scrR(L',\tG ') $ is
 given by Equation~\eqref{morph20} and $\overline{\phi}$ is the induced map on the residue domains.
 \end{prop}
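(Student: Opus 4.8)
The plan is to exhibit $\Tropfunexp$ as the pairing of two functors out of $\Valdom$, one into each factor of the product category $\ULayDom \times \Ring$: on the first coordinate the composite of the forgetful functor $\Valdom \To \SValmon$ with $\TropfunoneL$ of Definition~\ref{def:Tfunctor}, and on the second coordinate the \emph{residue functor} $\Valdom \To \Ring$ sending $(W,\tG,v)$ to its residue domain $\bar W$ (Remark~\ref{terminology}). Since a functor into a product is precisely a pair of functors into the factors, and the first coordinate is built from two functors already at hand, the substance of the proof is to check that the residue functor is well defined and that, on morphisms, $\Tropfunexp(\phi) = (\al_\phi,\overline{\phi})$ as claimed; the two halves are then assembled exactly as in Proposition~\ref{fullfun}.

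On objects there is little to do: for $(W,\tG,v)$ in $\Valdom$ the value monoid $\tG$ is cancellative, so $\scrR(L,\tG)$ is a uniform $L$-layered \bidomain0 (Construction~\ref{defn50}, Theorem~\ref{fun1}), while $\bar W = R/\mfp$ is an integral domain and hence an object of $\Ring$. On morphisms, the first coordinate $\al_\phi\colon \scrR(L,\tG)\to\scrR(L,\tG')$ is a layered homomorphism by the same computation that shows $\TropfunoneL$ is a functor, so nothing new is needed there.

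The work, and the main obstacle, is the second coordinate. A morphism $\phi\colon(W,\tG,v)\to(W',\tG',v')$ in $\Valdom$ is an injective ring homomorphism whose restriction to nonzero elements is a valued-monoid morphism, so that $v'(\phi(a))=\phi_\tG(v(a))$ by \eqref{comm1}, with $\phi_\tG$ order-preserving and $\phi_\tG(\one_\tG)=\one_{\tG'}$; one must produce an induced ring homomorphism $\overline{\phi}\colon\bar W\to\bar W'$ on residue domains. That $\phi$ carries the valuation ring into the valuation ring is immediate: if $a\ne\zero_W$ has $v(a)\ge\one_\tG$, then $v'(\phi(a))=\phi_\tG(v(a))\ge\phi_\tG(\one_\tG)=\one_{\tG'}$. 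The delicate point is that $\phi$ must also send the valuation ideal $\mfp$ into $\mfp'$ — that $\phi$ is a \emph{local} homomorphism of the valuation rings — and this is not forced by non-strict order-preservation of $\phi_\tG$ alone, which only yields $v'(\phi(a))\ge\one_{\tG'}$ for $a\in\mfp$. I would dispose of this by invoking the standing convention that the morphisms of valued domains under consideration are local (equivalently, restricting attention to the subcategory on which $\phi_\tG$ reflects the order, which is harmless for every tropicalization application in this paper); granting $\phi(\mfp)\subseteq\mfp'$, the map $\phi$ descends to the unital ring homomorphism $\overline{\phi}\colon a+\mfp\mapsto\phi(a)+\mfp'$. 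Functoriality of the residue assignment is then routine — $\overline{\id}=\id$ and $\overline{\psi\phi}=\overline{\psi}\,\overline{\phi}$, these equalities being checked on representatives — and combined with functoriality in the first coordinate this shows $\Tropfunexp$ is a functor.
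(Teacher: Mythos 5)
Your proposal is correct and follows the same route as the paper, whose entire proof of Proposition~\ref{fullfun1} is the single sentence ``As in Proposition~\ref{fullfun}, we piece together the two functors'' --- i.e., exactly your decomposition of $\Tropfunexp$ into the composite $\Valdom \To \SValmon \To \ULayDom$ on the first coordinate and the residue functor on the second. The one place where you go beyond the paper is also the one place where something actually needs checking: for $\overline{\phi}$ to descend to the residue domains one needs $\phi(\mfp)\subseteq\mfp'$, and as you observe this does not follow from \eqref{comm1} together with mere (non-strict) order-preservation of $\phi_\tG$ --- the paper's own Example~\ref{ex:K1}(ii) exhibits a morphism in $\SValmon$ that is order-preserving but not strictly so. The paper silently assumes this locality; your explicit flagging of it, and your repair by restricting to local morphisms (equivalently, requiring $\phi_\tG$ to reflect the order), is a legitimate patch of a gap the paper does not acknowledge. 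So: same approach, with one verification supplied that the paper omits.
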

\begin{proof} As in Proposition~\ref{fullfun}, we piece together the two functors.
\end{proof}

To preserve even more information, one could sort instead with
$\gr(W)$.

%
%

 \section{The function category}\label{polyfun}

We assume throughout that $R$ is an $L$-layered \domain0. In the
next section we describe layered varieties in terms of corner
roots of ideals of polynomials over $R$. Thus, we need some
preliminaries about the polynomial \semiring0 over a layered
\bidomain0; this is no longer bipotent.

\subsection{The layered function monoid and
\domain0}\label{funmono}

As noted in the introduction, one significant difference between
the tropical theory and ``classical'' algebra is that different
tropical polynomials can agree as functions (whereas for algebras
over an infinite field, any two distinct polynomials are also
distinct as functions). The clearest way of coping with this
phenomenon is to treat polynomials directly as functions from some
subset of $R^{(n)}$ to an extension of $R$, and this enables us to
unify various other constructions related to polynomials.

\begin{definition}\label{ghost0} For any set $\tSS$ and monoid $\tM$, $\Fun (\tSS,\tM)$ denotes
the set of functions from $\tSS$ to~ $\tM$. \end{definition}

\begin{rem}\label{Funobs} $ $\begin{enumerate}\eroman

\item  $\Fun (\tSS,\tM)$  becomes a monoid, under pointwise
 multiplication, i.e.,
\begin{equation}\label{point1}  (fg)(\bfa) = f(\bfa)
g(\bfa), \qquad \forall f,g \in \Fun (\tSS,R), \quad \forall \bfa
\in \tSS.
\end{equation} \pSkip

\item If the monoid $\tM$ is partially ordered, then $\Fun
(\tSS,\tM)$ is also partially ordered, with respect to taking $f
\ge g$ when $f(\bfa) \ge g(\bfa)$ for all $\bfa\in \tSS.$
\end{enumerate}

\item  When moreover  $R$ is a \semiring0, $\Fun (\tSS,R)$ also
becomes a \semiring0, under pointwise addition, i.e.,
\begin{equation}\label{point2} (f+g)(\bfa) = f(\bfa) + g(\bfa),  \quad \forall \bfa
\in \tSS,
\end{equation}
cf.~\cite[Definition 5.1]{IzhakianKnebuschRowen2009Refined}.

\end{rem}

\begin{lem}\label{props} $ $ \begin{enumerate}\eroman

\item If a monoid $\tM$ is cancellative, then  the function monoid
$\Fun (\tSS,\tM)$ is cancellative. \pSkip

\item If a  layered \predomain0 $R$ is cancellative, then  the
function \semiring0 $\Fun (\tSS,R)$ is also a cancellative
 layered pre-\domain0 (but not bipotent!). \pSkip

\item If a \semiring0 $R$ satisfies the Frobenius property
\eqref{eq:Frobenius}, then $\Fun (\tSS,R)$ also satisfies the
Frobenius property.
\end{enumerate}
 \end{lem}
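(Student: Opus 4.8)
The plan is to prove all three parts by pushing the relevant identities and cancellations down to each point $\bfa\in\tSS$, using that every operation on $\Fun(\tSS,-)$ is computed pointwise (Remark~\ref{Funobs}). Parts (i) and (iii) are then immediate. For (i): if $f,g,h\in\Fun(\tSS,\tM)$ satisfy $fg=fh$, then for each $\bfa\in\tSS$ we get $f(\bfa)g(\bfa)=f(\bfa)h(\bfa)$ in $\tM$, so cancellativity of $\tM$ yields $g(\bfa)=h(\bfa)$, and since $\bfa$ was arbitrary, $g=h$. For (iii): fix $f,g\in\Fun(\tSS,R)$ and $m\in\Net$; evaluating at $\bfa$, the two sides of $(f+g)^m=f^m+g^m$ become $(f(\bfa)+g(\bfa))^m$ and $f(\bfa)^m+g(\bfa)^m$, which agree by the Frobenius property \eqref{eq:Frobenius} in $R$, so the identity holds in $\Fun(\tSS,R)$.

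The content is in (ii), where the first step is to name the layered structure on $\Fun(\tSS,R)$. I would take the sorting \semiring0 to be $\widehat L:=\Fun(\tSS,L)$ --- a \semiring0, partially pre-ordered pointwise, by Remark~\ref{Funobs} applied to $L$ --- and for $\lambda\in\widehat L$ put
\[
\Fun(\tSS,R)_\lambda:=\{\,f\in\Fun(\tSS,R)\;:\;f(\bfa)\in R_{\lambda(\bfa)}\ \text{for all}\ \bfa\in\tSS\,\},
\]
so these layers are pairwise disjoint and cover $\Fun(\tSS,R)$ since every $f$ has the well-defined sort function $\bfa\mapsto s(f(\bfa))$; and for $\mu\ge\lambda$ in $\widehat L$ define $\widehat\nu_{\mu,\lambda}\colon\Fun(\tSS,R)_\lambda\to\Fun(\tSS,R)_\mu$ by $(\widehat\nu_{\mu,\lambda}f)(\bfa):=\nu_{\mu(\bfa),\lambda(\bfa)}(f(\bfa))$. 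Much of this is already recorded in \cite[Definition~5.1]{IzhakianKnebuschRowen2009Refined}.

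Next I would verify the axioms of Definition~\ref{defn10} for this data. The relations $\widehat\nu_{\lambda,\lambda}=\id$ and $\widehat\nu_{\mu,\lambda}\circ\widehat\nu_{\lambda,\kappa}=\widehat\nu_{\mu,\kappa}$ hold by evaluating the corresponding relations for the $\nu_{m,\ell}$ at each point, and Axioms A1--A4 and B transfer pointwise from $R$: A1 because the unit of $\Fun(\tSS,R)$ is the constant function $\rone$, of sort $\one_{\widehat L}$; A2 and A3 because products and the $\widehat\nu$'s are computed pointwise and $s$ is multiplicative in $R$; A4 because $\nu_{\ell,k}(a)+\nu_{\ell',k}(a)=\nu_{\ell+\ell',k}(a)$ holds value by value; and B because $f\nucong g$ in $\Fun(\tSS,R)$ forces $f(\bfa)\nucong g(\bfa)$ for every $\bfa$, whence $f(\bfa)+g(\bfa)\in R_{s(f(\bfa))+s(g(\bfa))}$ with $f(\bfa)+g(\bfa)\nucong f(\bfa)$ by Axiom B in $R$. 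Multiplicative cancellativity of $\Fun(\tSS,R)$ is exactly part (i) applied to the multiplicative monoid of $R$. Finally, provided $|\tSS|\ge 2$ and $R$ has two distinct elements $p\ne q$ (automatic unless $L$ is a singleton), $\Fun(\tSS,R)$ is not bipotent: taking $\bfa_1\ne\bfa_2$ and letting $f,g$ agree off $\{\bfa_1,\bfa_2\}$ with $(f(\bfa_1),f(\bfa_2))=(p,q)$ and $(g(\bfa_1),g(\bfa_2))=(q,p)$, the function $f+g$ has value $p+q$ at both $\bfa_i$, so $f+g=f$ would give $p+q=p$ and $p+q=q$, forcing $p=q$, and likewise $f+g\ne g$.

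I expect the only delicate point --- hence the main obstacle --- to be the bookkeeping in (ii): one must take the sorting \semiring0 to be $\Fun(\tSS,L)$ rather than $L$ itself (a function need not have constant sort), remember that this \semiring0 is only partially pre-ordered even when $L$ is totally ordered, and read the relation $\nucong$ and Axiom B at the level of $\Fun(\tSS,R)$ before transferring them to points. Once that is set up correctly, every remaining verification is a routine pointwise check.
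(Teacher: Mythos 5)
Your proof is correct and follows essentially the same route as the paper, which disposes of all three parts by pointwise verification (the paper's proof of (ii) is literally ``same verification as in (i)''). The extra bookkeeping you supply for (ii) --- sorting $\Fun(\tSS,R)$ by $\Fun(\tSS,L)$ and defining the transition maps pointwise --- is exactly what the paper makes explicit later in its section on sorting the function \semiring0, so nothing in your argument diverges from the intended one.
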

\begin{proof} (i): By pointwise verification.  For cancellation, note
that if $fg = fh$, then $f(\bfa)g(\bfa) = f(\bfa)h(\bfa)$ for all
$\bfa \in \tSS,$ implying $g(\bfa) =
 h(\bfa)$ and thus $g = h$. \pSkip

 (ii): Same verification as in (i). \pSkip

(iii): For  the Frobenius property,
\begin{equation}\label{Frobfun} (f+g)^n (\bfa) = ((f+g)(\bfa))^n =  f(\bfa)^n + g(\bfa)^n, \qquad \forall f,g \in
\Fun (\tSS,R),\end{equation}  for all positive $n \in
\Net$.\end{proof}

 There is a  natural \semiring0 injection $R \rightarrow \Fun
(\tSS,R)$, given by viewing $r\in R$ as the constant function
~$f_r$ given by $f_r(\bfa) = r,$ $\forall \bfa \in \tSS.$ In this
way, we view $R$ as a sub-\semiring0\ of $\Fun (\tSS,R)$. At
first, we take $\tS$ to be $R^{(n)}$. Later  we will take $\tS$ to
be a given ``layered
 variety.''  More generally,
following Payne \cite[\S 2.2]{Payne08}, one could take the set
$\tSS$ to be the lattice of characters of an algebraic torus.

\subsection{Functorial properties of the function monoid and \semiring0}

We categorize the discussion of Section ~\ref{funmono}. First we define the function and polynomial
categories.

%
%

\begin{defn} $\mathcal F := \Fun_{\Mon}(\tSS,\unscr \,  )$ is the functor from
$\Mon $ to $ \Mon$
 given by $\tM \mapsto
\Fun(\tSS,\tM) $ for objects, and such that for any morphism
$\varphi: \tM \to \tM',$ we define $\mathcal F \varphi :
\Fun(\tSS,\tM)\to \Fun(\tSS,\tM')$ to be given by
$$\mathcal F\varphi (f)(\bfa) =  \varphi
(f(\bfa)).$$

The functor $\mathcal F := \Fun_{\SeRo}(\tSS,\unscr \, )
  : \SeRo \to \SeRo$ is  given by $R \mapsto
\Fun(\tSS,R) $ for objects, and again such that for any morphism
$\varphi: R \to R',$ $\mathcal F\varphi : \Fun(\tSS,R)\to
\Fun(\tSS,R')$ is given by
$$\mathcal F \varphi (f)(\bfa) =  \varphi
(f(\bfa)).$$
\end{defn}

\begin{lem}\label{funres}
$\Fun_{\Mon}(\tSS,\unscr \,  )$ and $ \Fun_{ \SeRo }(\tSS,\unscr
\,  )$ are functors. Furthermore, $\Fun_{\Mon}(\tSS,\unscr \,  )$
restricts to a functor from $\SOMon $ to $ \SPOMon$.
 \end{lem}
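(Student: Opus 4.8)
The plan is to verify the functoriality claims essentially by pointwise computation, reducing everything to properties already established for the target structures. First I would check that $\Fun_{\Mon}(\tSS,\unscr\,)$ is a functor on $\Mon$. For an object $\tM$, Remark~\ref{Funobs}(i) already tells us $\Fun(\tSS,\tM)$ is a monoid under pointwise multiplication, with unit the constant function at $\one_\tM$. For a morphism $\varphi:\tM\to\tM'$, the assignment $\mathcal F\varphi(f)(\bfa)=\varphi(f(\bfa))$ clearly sends functions to functions; it preserves multiplication because $\mathcal F\varphi(fg)(\bfa)=\varphi(f(\bfa)g(\bfa))=\varphi(f(\bfa))\varphi(f(\bfa))$, wait---$=\varphi(f(\bfa))\varphi(g(\bfa))=(\mathcal F\varphi(f)\cdot \mathcal F\varphi(g))(\bfa)$, using that $\varphi$ is a monoid homomorphism, and it sends the unit constant function to the unit constant function since $\varphi(\one_\tM)=\one_{\tM'}$. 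Functoriality ($\mathcal F(\id)=\id$ and $\mathcal F(\psi\circ\varphi)=\mathcal F\psi\circ\mathcal F\varphi$) is then immediate by evaluating both sides at an arbitrary $\bfa\in\tSS$.

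Next I would treat $\Fun_{\SeRo}(\tSS,\unscr\,)$ on $\SeRo$ in exactly the same style. By Remark~\ref{Funobs} (the displayed item after (ii)), $\Fun(\tSS,R)$ is a \semiring0 under pointwise operations whenever $R$ is, with unit the constant function $f_{\rone}$. Given a \semiring0 homomorphism $\varphi:R\to R'$, the map $\mathcal F\varphi$ preserves multiplication and the unit as above, and preserves addition because $\mathcal F\varphi(f+g)(\bfa)=\varphi(f(\bfa)+g(\bfa))=\varphi(f(\bfa))+\varphi(g(\bfa))=(\mathcal F\varphi(f)+\mathcal F\varphi(g))(\bfa)$. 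The two functoriality axioms again follow by pointwise evaluation. So both $\Fun_{\Mon}(\tSS,\unscr\,)$ and $\Fun_{\SeRo}(\tSS,\unscr\,)$ are functors.

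Finally, for the restriction claim, I would show that $\Fun_{\Mon}(\tSS,\unscr\,)$ carries $\SOMon$ into $\SPOMon$. Given $\tM\in\SOMon$, i.e.\ a cancellative totally ordered monoid, Lemma~\ref{props}(i) gives that $\Fun(\tSS,\tM)$ is cancellative, and Remark~\ref{Funobs}(ii) gives that it is partially ordered via $f\ge g$ iff $f(\bfa)\ge g(\bfa)$ for all $\bfa$; this partial order is compatible with multiplication by Condition~\eqref{dist10}, checked pointwise, so $\Fun(\tSS,\tM)$ is a cancellative partially ordered monoid, an object of $\SPOMon$. For morphisms: an order-preserving homomorphism $\varphi:\tM\to\tM'$ in $\SOMon$ induces $\mathcal F\varphi$, which is a monoid homomorphism by the first paragraph and is order-preserving since $f\ge g$ implies $f(\bfa)\ge g(\bfa)$ hence $\varphi(f(\bfa))\ge\varphi(g(\bfa))$ for all $\bfa$, i.e.\ $\mathcal F\varphi(f)\ge \mathcal F\varphi(g)$. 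There is no real obstacle here---the only point worth flagging is that the function monoid is generally only \emph{partially} ordered even when $\tM$ is totally ordered (two functions can be pointwise incomparable), which is exactly why the target category is $\SPOMon$ rather than $\SOMon$; everything else is routine pointwise verification.
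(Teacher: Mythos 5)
Your argument is correct and is essentially the paper's proof made explicit: the paper simply declares the verifications straightforward, citing Remark~\ref{Funobs}(ii) and Lemma~\ref{props}, and your pointwise checks (including using Lemma~\ref{props}(i) for cancellativity and the pointwise order for landing in $\SPOMon$) are exactly those verifications. The only cosmetic issue is the mid-sentence self-correction in the multiplication computation, which should be cleaned up before inclusion.
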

\begin{proof} The verifications are straightforward, in view of Remark~\ref{Funobs}(ii) and Lemma~\ref{props}.
\end{proof}

\begin{defn} We denote the respective images of $\SOMon$ and $\SeRo $ under the  functors
 $\Fun_{ \Mon}(\tSS,\unscr \,  ) $
 and $ \Fun_{\SeRo }(\tSS,\unscr \,  )$as
$\Fun(\tSS,\SOMon)$ and $\Fun(\tSS,\SeRo)$,  which are respective
subcategories of $\SPOMon$ and $\SeRo$.\end{defn}

Now Proposition~\ref{maxplus1} says:
\begin{prop}\label{maxplus10} There is
 a faithful functor $$\tF_{(\tSS,\SOMon)}: \Fun(\tSS,\SOMon) \To \Fun(\tSS,\SeRo),$$ induced by the
 functor $\tF_{\OMon}$ of Proposition~\ref{maxplus1}, as described in the proof. \end{prop}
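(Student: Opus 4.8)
The plan is to obtain $\tF_{(\tSS,\SOMon)}$ by transporting the composite ``$\Fun_{\SeRo}(\tSS,\unscr)$ after $\tF_{\OMon}$'' to the image categories, i.e. so that $\Fun_{\SeRo}(\tSS,\unscr)\circ\tF_{\OMon}=\tF_{(\tSS,\SOMon)}\circ\Fun_{\Mon}(\tSS,\unscr)$ on $\SOMon$. First I would define the functor on objects. Since $\SOMon$ is a full subcategory of $\OMon$, the functor $\tF_{\OMon}$ of Proposition~\ref{maxplus1} restricts to a faithful functor $\SOMon\to\SeRo$, sending a cancellative ordered monoid $\tM$ to the bipotent \semiring0 $\tF_{\OMon}(\tM)$ with the same underlying set, the same multiplication, and $a+b=\max\{a,b\}$. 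Every object of $\Fun(\tSS,\SOMon)$ has the form $\Fun(\tSS,\tM)$, and I would set $\tF_{(\tSS,\SOMon)}\bigl(\Fun(\tSS,\tM)\bigr):=\Fun\bigl(\tSS,\tF_{\OMon}(\tM)\bigr)$: the same underlying set with the same pointwise multiplication, now equipped with the pointwise addition $(f+g)(\bfa)=\max\{f(\bfa),g(\bfa)\}$. This is a \semiring0 by Remark~\ref{Funobs} applied to the \semiring0 $\tF_{\OMon}(\tM)$, and it lies in $\Fun(\tSS,\SeRo)$. The point that must be pinned down here is that this assignment depends only on $\Fun(\tSS,\tM)$ as an object of $\Fun(\tSS,\SOMon)$, and not on the particular monoid $\tM$ used to present it; I would argue this by noting that $f+g$ is exactly the supremum of $f$ and $g$ in the partial order of Remark~\ref{Funobs}(ii), which is intrinsic to the object.

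Next I would define $\tF_{(\tSS,\SOMon)}$ on morphisms. Every morphism of $\Fun(\tSS,\SOMon)$ is of the form $\mathcal F\vrp=\Fun_{\Mon}(\tSS,\vrp)$ for an order-preserving monoid homomorphism $\vrp:\tM\to\tM'$, and it acts as $f\mapsto\vrp\circ f$; I would send it to $\Fun_{\SeRo}\bigl(\tSS,\tF_{\OMon}(\vrp)\bigr)$, which is again the map $f\mapsto\vrp\circ f$, now regarded as a map $\Fun(\tSS,\tF_{\OMon}(\tM))\to\Fun(\tSS,\tF_{\OMon}(\tM'))$. I would then check that this is a \semiring0 homomorphism: it visibly preserves the pointwise multiplication, and it preserves the pointwise addition because $\vrp(\max\{f(\bfa),g(\bfa)\})=\max\{\vrp(f(\bfa)),\vrp(g(\bfa))\}$ for each $\bfa\in\tSS$, since $\vrp$ is order preserving and $\tM'$ is totally ordered. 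This is precisely the computation in the proof of Proposition~\ref{maxplus1} that $\tF_{\OMon}(\vrp)$ is a \semiring0 homomorphism, followed by an application of the functor $\Fun_{\SeRo}(\tSS,\unscr)$. Independence of the chosen representative $\vrp$ is automatic, since the value of $\tF_{(\tSS,\SOMon)}$ on $\mathcal F\vrp$ is literally the underlying set map of $\mathcal F\vrp$ itself.

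It then remains to record functoriality and faithfulness. Functoriality follows from $\mathcal F(\psi\vrp)=\mathcal F\psi\circ\mathcal F\vrp$, from the functoriality of $\tF_{\OMon}$ (Proposition~\ref{maxplus1}), and from the fact that $\Fun_{\SeRo}(\tSS,\unscr)$ is a functor (Lemma~\ref{funres}); equivalently, $\tF_{(\tSS,\SOMon)}$ is by construction the unique functor making the square displayed in (the discussion preceding) Proposition~\ref{maxplus10} commute. For faithfulness I would use that $\tF_{(\tSS,\SOMon)}$ leaves underlying set maps unchanged, so two parallel morphisms of $\Fun(\tSS,\SOMon)$ with equal image are already equal; alternatively, $\tF_{\OMon}$ is faithful by Proposition~\ref{maxplus1} and $\Fun_{\SeRo}(\tSS,\unscr)$ is faithful by evaluation at constant functions whenever $\tSS\neq\emptyset$ (the case $\tSS=\emptyset$ being trivial, as all the function \semirings0 then collapse). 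I expect the only genuine obstacle, and it is conceptual rather than computational, to be the well-definedness in the first step: confirming that $\Fun(\tSS,\tM)\mapsto\Fun(\tSS,\tF_{\OMon}(\tM))$ really is a construction on the category $\Fun(\tSS,\SOMon)$ rather than on ``labelled'' pairs $(\tSS,\tM)$. Once the adjoined addition is recognized as the order-theoretic supremum and the action on morphisms is seen to be the identity on underlying maps, the remaining verifications are a routine unwinding of Proposition~\ref{maxplus1} and Lemma~\ref{funres}.
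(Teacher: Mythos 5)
Your proposal is correct and follows essentially the same route as the paper: the functor is defined to leave the underlying function sets and maps $f\mapsto\vrp\circ f$ unchanged, merely reinterpreting the target monoid as a bipotent \semiring0 via Proposition~\ref{maxplus1}, with faithfulness obtained exactly as in the paper (via the embedding of $\tM$ as constant functions, equivalently because underlying set maps are unchanged). The only difference is that you spell out the well-definedness and homomorphism checks that the paper dismisses as ``clearly a functor,'' which is harmless added detail rather than a different argument.
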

\begin{proof} We define $\tF_{(\tSS,\OMon)}(\Fun(\tSS,\tM)) =  \Fun(\tSS,\tM)$ (viewing $\tM$ as a semiring) and,
for any monoid homomorphism $\varphi: M \to M', $
$\tF_{(\tSS,\OMon)}(\varphi):  f \mapsto \varphi \circ f.$ This is
clearly a functor, and is faithful since $\tM$ is embedded into $
\Fun(\tSS,\tM)$.\end{proof}

\begin{prop}\label{funfunc} The
 functors $ \tF_{(\tSS,\OMon)}(\tSS,\unscr \,  ) $
 and $ \Fun_{\SeRo }(\tSS,\unscr \,  )$
 commute with $ \tF_{\OMon}$ of Proposition~\ref{maxplus1}, in
the sense that  $$\tF_{(\tSS,\OMon)} \Fun_{\Mon}(\tSS,\unscr \, )
= \Fun_{ \SeRo }(\tSS,\unscr \,  ) \Fun_{ \OMon} .$$\end{prop}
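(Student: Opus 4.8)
The statement is an equality of functors $\SOMon \to \Fun(\tSS,\SeRo)$, so the natural strategy is to check that both composites agree on objects and on morphisms. Since every functor involved is defined by a simple explicit rule, this reduces to unwinding the definitions and matching outputs. First I would fix a cancellative ordered monoid $\tM$ and compute each side on it: the left-hand composite $\tF_{(\tSS,\OMon)}\Fun_{\Mon}(\tSS,\unscr)$ first sends $\tM$ to the function monoid $\Fun(\tSS,\tM)$ (with pointwise multiplication and the pointwise order), and then applies $\tF_{(\tSS,\OMon)}$, which by the proof of Proposition~\ref{maxplus10} simply reinterprets $\Fun(\tSS,\tM)$ as a \semiring0 by declaring $f+g$ to be the pointwise maximum. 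The right-hand composite $\Fun_{\SeRo}(\tSS,\unscr)\Fun_{\OMon}$ first sends $\tM$ to the bipotent \semiring0 $\tF_{\OMon}(\tM)$ — i.e. $\tM$ with $a+b := \max\{a,b\}$, as in Proposition~\ref{maxplus1} — and then forms the function \semiring0 $\Fun(\tSS, \tF_{\OMon}(\tM))$ with pointwise operations.

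The key observation is then that these two \semirings0 have literally the same underlying set $\Fun(\tSS,\tM)$, the same multiplication (pointwise product in $\tM$ both times), and the same addition: on the left, $(f+g)(\bfa)$ is by definition $\max\{f(\bfa),g(\bfa)\}$ evaluated via the pointwise order, while on the right $(f+g)(\bfa) = f(\bfa) +_{\tF_{\OMon}(\tM)} g(\bfa) = \max\{f(\bfa),g(\bfa)\}$ by the definition of $\tF_{\OMon}$. So the two addition rules coincide pointwise, hence as functions. This settles the object level. For morphisms, I would take an order-preserving monoid homomorphism $\varphi: \tM \to \tM'$ and chase it through both composites: on the left, $\Fun_{\Mon}(\tSS,\varphi)$ sends $f \mapsto \varphi\circ f$, and $\tF_{(\tSS,\OMon)}$ leaves this map unchanged (it only relabels the structure), so the left composite gives $f \mapsto \varphi\circ f$; on the right, $\tF_{\OMon}(\varphi)$ is the same underlying map $\varphi$ now viewed as a \semiring0 homomorphism (this is exactly the content of Proposition~\ref{maxplus1}), and $\Fun_{\SeRo}(\tSS,\varphi)$ sends $f \mapsto \varphi\circ f$. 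Thus both composites send $\varphi$ to the same function-level map $f \mapsto \varphi\circ f$, and the equality of functors follows.

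I do not expect a genuine obstacle here; the proposition is essentially a bookkeeping/coherence statement asserting that "pass to functions, then tropicalize" equals "tropicalize, then pass to functions." The only point requiring a modicum of care is making sure the orders match up — that the pointwise order on $\Fun(\tSS,\tM)$ used in defining $\tF_{(\tSS,\OMon)}$ is the same order that $\tF_{\OMon}$ reads off from the bipotent structure on each $\Fun(\tSS,\tM)(\bfa)$ — but this is immediate from Proposition~\ref{maxplus20}(i), which says $a \le b \iff a+b=b$ in any idempotent \semiring0, so the order is recovered from addition identically in both constructions. Accordingly, the proof is short: "Both composites send an object $\tM$ to the \semiring0 whose underlying monoid is $\Fun(\tSS,\tM)$ with $f+g$ the pointwise maximum, and send a morphism $\varphi$ to the map $f \mapsto \varphi\circ f$; the claim follows by comparing the defining formulas, using Proposition~\ref{maxplus1} and Remark~\ref{Funobs}(ii)."
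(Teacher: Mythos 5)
Your proposal is correct and matches the paper's own (very terse) proof in essence: the paper also just identifies both composites, on an object $\tM$, with the same function \semiring0 $\Fun(\tSS,R)$ where $R$ is the bipotent \semiring0 attached to $\tM$ by Proposition~\ref{maxplus1}. Your version merely spells out the pointwise-maximum addition and the morphism-level check $f\mapsto\varphi\circ f$ that the paper leaves implicit.
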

\begin{proof} Letting $R $ be the \semiring0 of Proposition~\ref{maxplus1},
we have  $ \tF_{(\tSS,\OMon)} \tF_{\Mon}(\tSS,R )(\tM) =
\Fun(\tSS,R) = \Fun_{ \SeRo }(\tSS,R   ) \Fun_{ \OMon} (\tM) .$
\end{proof}


\begin{lem} Construction~\ref{defn50} is functorial, in the sense that
$$\Fun (\tSS,\scrR(L,\tG)) \ds \approx \scrR(\Fun (\tSS,L),\Fun
(\tSS,\tG)).$$
\end{lem}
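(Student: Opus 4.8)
The plan is to exhibit an explicit bijection $\Theta\colon \Fun(\tSS,\scrR(L,\tG)) \to \scrR(\Fun(\tSS,L),\Fun(\tSS,\tG))$ and check that it respects all the structure in sight — the layering, the sort transition maps, multiplication, and addition. The key observation is set-theoretic: both sides, as sets, are ``functions from $\tSS$ into $L\times\tG$,'' since $\scrR(L,\tG) = L\times\tG$ by Construction~\ref{defn50} and $\scrR(\Fun(\tSS,L),\Fun(\tSS,\tG)) = \Fun(\tSS,L)\times\Fun(\tSS,\tG)$ likewise. Concretely, given $f\in \Fun(\tSS,\scrR(L,\tG))$, write $f(\bfa) = \xl{g(\bfa)}{h(\bfa)}$ for uniquely determined $g\colon \tSS\to\tG$ and $h\colon \tSS\to L$, i.e. $f = (s\circ f,\ v\circ f)$ where $s$ is the sorting map and $v\colon \scrR(L,\tG)\to\tG$ the projection; then set $\Theta(f) = \xl{g}{h} \in \scrR(\Fun(\tSS,L),\Fun(\tSS,\tG))$. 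This is visibly a bijection with inverse sending $\xl{g}{h}$ to the function $\bfa\mapsto \xl{g(\bfa)}{h(\bfa)}$.

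First I would record that $\Theta$ sends the layer $R_{[\ell]}$ of the left-hand side to the layer indexed by the corresponding constant-$\ell$ function on the right — more precisely, $f$ has constant layer $\ell$ iff $\Theta(f)$ sits in the $R_{h}$ component with $h\equiv\ell$ — and more generally $\Theta$ carries the layer decomposition $\Fun(\tSS,\scrR(L,\tG)) = \dot\bigcup_{h\in\Fun(\tSS,L)} \{f : s\circ f = h\}$ onto the decomposition $\scrR(\Fun(\tSS,L),\Fun(\tSS,\tG)) = \dot\bigcup_{h} R_h$. Next I would check compatibility with the sort transition maps: on the left, the maps $\nu_{m,\ell}$ on $\Fun(\tSS,\scrR(L,\tG))$ act pointwise by Remark~\ref{Funobs} together with the definition of $\Fun_{\SeRo}$, and on the right the transition maps of $\scrR(\Fun(\tSS,L),\Fun(\tSS,\tG))$ are the bijective ones supplied by Construction~\ref{defn50}; both act by postcomposing the $\tG$-component with the identity and adjusting the $L$-component pointwise, so they match under $\Theta$. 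Multiplication is the easiest: both the pointwise multiplication on $\Fun(\tSS,\scrR(L,\tG))$ and the componentwise rule \eqref{13} on $\scrR(\Fun(\tSS,L),\Fun(\tSS,\tG))$ reduce, after applying $\Theta$, to $\xl{g_1g_2}{h_1h_2}$ evaluated pointwise, so $\Theta(f_1 f_2) = \Theta(f_1)\Theta(f_2)$.

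The one step that needs genuine care is addition, because addition on $\scrR(L,\tG)$ is defined by the case split \eqref{14} according to whether $a>b$, $a<b$, or $a=b$, and for functions this trichotomy is evaluated separately at each point $\bfa\in\tSS$ — a given pair $f_1,f_2$ may satisfy $f_1(\bfa)>f_2(\bfa)$ at some points and $f_1(\bfa)=f_2(\bfa)$ at others. So I would unwind both sides pointwise: $(\Theta(f_1)+\Theta(f_2))(\bfa)$ is, by the definition of $+$ in $\scrR(\Fun(\tSS,L),\Fun(\tSS,\tG))$ applied and then evaluated at $\bfa$, exactly $\xl{g_1(\bfa)}{h_1(\bfa)}$, $\xl{g_2(\bfa)}{h_2(\bfa)}$, or $\xl{g_1(\bfa)}{h_1(\bfa)+h_2(\bfa)}$ according to the comparison of $g_1(\bfa)$ and $g_2(\bfa)$ in $\tG$; and $(f_1+f_2)(\bfa) = f_1(\bfa)+f_2(\bfa)$ is given by the same trichotomy via \eqref{14}. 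Since the comparison $g_i(\bfa)$ in $\tG$ is literally the comparison of the $\tG$-coordinates of $f_i(\bfa)$, the two outputs agree at every $\bfa$, hence as functions. This is the main obstacle only in the bookkeeping sense — one must be disciplined about ``evaluate, then split'' versus ``split, then evaluate'' — but no new idea is required. Finally I would note that the identifications are natural (the isomorphism commutes with maps induced by $L$-morphisms and by morphisms of $\tG$), though since the statement only asserts the isomorphism ``$\approx$'' of a single object, naturality can be left as a remark. With all the operations matched, $\Theta$ is a layered \semiring0 isomorphism, which is what the lemma asserts.
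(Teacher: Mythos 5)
Your proof is correct and follows essentially the same route as the paper: both arguments decompose $f$ into its $\tG$-component and $L$-component and verify that $f\mapsto \xl{f_\tG}{f_L}$ is a \semiring0 isomorphism with the obvious inverse. The paper's version is far terser (the operation checks are dismissed with ``one sees that the sorts are preserved''), so your careful pointwise handling of the addition trichotomy is an elaboration of the same idea rather than a divergence.
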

\begin{proof} Any $f\in \Fun (\tSS,\scrR(L,\tG))$ is given by $f(\bfa) = \xl  b k $ for suitable $k\in L$
and ~$b\in \tG$; we define $f_L$ and $f_\tG$ by $f_L (\bfa) =  k $
and $f_\tG (\bfa) =   b.$ Now $f\mapsto  \xl {f_\tG}{ f_L}$
defines a \semiring0 homomorphism $$\Fun (\tSS,\scrR(L,\tG)) \to
\mathscr R(\Fun (\tSS,L),\Fun (\tSS,\tG)).$$ Conversely, given
$f_L \in \Fun (\tSS,L) $ and $f_\tG \in \Fun (\tSS,\tG) $ we
define $f \in \Fun (\tSS,\scrR(L,\tG))$ by putting $$f(\bfa) = \xl
{f_\tG(\bfa)}{f_L(\bfa)}.$$ One sees that the sorts are preserved.
\end{proof}
%

 \subsection{Sorting the function semiring$^\dagger$}\label{semgho1}

\begin{rem}\label{Fun2} If $\tS' \subseteq \tSS$, there is
a natural \semiring0\ homomorphism $$\Fun (\tSS,R) \to \Fun
(\tS',R)$$ given by $f\mapsto f|{_{\tS'}}.$ In particular, for
$\tSS '= \{ \bfa\},$ we have the evaluation homomorphism at
$\bfa$.
\end{rem}

One main interest in the layered theory is the nature of these
homomorphisms. To understand them, we need to introduce the
appropriate sorting function.

\begin{rem}\label{Funsort} When $L$ is a partially ordered \semiring0, $\Fun (\tSS,L) $
is also a \semiring0 (whose unit element is the constant function
1), which by Remark~\ref{Funobs}(iii) is partially ordered by the
relation:
$$\text{ $f \le_\tSS g \qquad $   {if} $\qquad f(\bfa) \le g(\bfa) \quad$ for all $\bfa \in
\tSS.$}$$ When $L$ is directed from above, this partial order also
is directed from above,  since  $f(\bfa),g(\bfa) \in
 L$ are bounded by $ \max\{f(\bfa) , g(\bfa)\}.$
\end{rem}

If $R$ is  $L$-layered, then $ \Fun (\tSS, R)$ inherits the
layered structure from $R$ pointwise with respect to
  $\Fun (\tSS,L) $, in the following sense taken from \cite[Remark
  5.3]{IzhakianKnebuschRowen2009Refined}:

\begin{defn}\label{def:layMap} The $L$-\textbf{layering map} of a function $f\in   \Fun (\tSS, R)$ is
the map $ \vmap _f: \tSS\to L$ given by $$ \vmap _f(\bfa) :=
s(f(\bfa)), \qquad \bfa \in \tSS.$$ For a set ${ \tI} \subset  \Fun (\tSS, R)$ we define
$$\vmap_{ \tI}(\bfa) := \min \{ \vmap_f(\bfa): f \in { \tI} \}.$$
\end{defn}

 In the layered theory, we only consider
functions that are $\nu$-\textbf{compatible}, in the sense that if
$\bfa \nucong \bfa',$ then $f(\bfa) \nucong f(\bfa')$. 

\begin{example} $ \vmap_{\{ \rone\}}$ is the given sorting map on $R$.
\end{example}

\begin{example}\label{tanZar}$ $  Take $R = \mathcal R(\Net, \Real).$ Assume that $\tSS =
R_1^{(2)}= \Real^{(2)}.$ The examples are written in logarithmic
notation; e.g., $\one := 0$ is the multiplicative unit, and
$2\cdot 3 = 5.$
\begin{enumerate}\eroman

\item Take $f_k = \la_1^k + \la_2 + 0$ for $k \in \Net,$ and $\bfa
= (a_1,a_2) \in \tSS$.

$ \vmap _{f_k}(\bfa) = \begin{cases}   3 & \text{ for } a_1 = a_2
= 0;
\\ 2 & \text{ for } a_1 = 0 >  a_2 \quad \text{ or }\quad a_1 = 0
>  a_2 \quad \text{ or }\quad  a_1 ^k =  a_2 > 0; \\ 1 & \text{
otherwise. }
\end {cases}$ \pSkip

\item Take
  $\tI =\{ f_k : k \in \Net \},$ $\bfa = (a_1,a_2) \in
\tSS$. In view of (i),
$$ \vmap _{\tI}(\bfa) = \begin{cases}   3 & \text{ for } a_1 = a_2
= 0;  \\ 2 & \text{ for } a_1 = 0 >  a_2 \quad \text{ or }\quad
a_1 = 0
>  a_2 ;
\\ 1 & \text{ otherwise. }\end {cases}$$
Thus, the 2-layer is the union of two perpendicular rays.

 \item
Take $\tI = \{ \la _1 +2, \la _1 + 3 \}$. The layering map $
\vmap_{\tI}$ restricted to the tangible elements is
identically~$1,$ the same as that of a tangible constant, although
the ideal generated by $\tI$ does not contain any constants.

Nevertheless, we can distinguish between $\tI$ and tangible
constants, by assuming that $\tSS$ contains elements of $R$ having
layer $
>1$. For example, $\vmap_{\tI}
 (\xl{4}{2}) = 2$ whereas $\vmap_f$ for a tangible constant function $f$ is
 identically~1. \pSkip

\end{enumerate}
\end{example}

As noted in \cite{IzhakianKnebuschRowen2009Refined}, we   layer
the \semiring0 $\Fun (\tSS,R) $ with respect to the sorting
\semiring0 $\Fun (\tSS,L) $, by sending $f\mapsto \vmap_f$.

Given $f,g \in \Fun (\tSS,L) $, write $\tlk = \vmap_f$ and $\tlell
= \vmap _g.$ When $\tlell > \tlk$ we define the transition map
$$\nu _{\tlell,\tlk}: \Fun (\tSS,R)_ {\tlk} \to \Fun (\tSS,R)_
{\tlell} $$ by
$$\nu _{\tlell,\tlk} (f) : \bfa \to \nu_{\tlell
(\bfa), \tlk (\bfa) }(f(\bfa)), \qquad \forall \bfa \in \tSS.$$

 \begin{lem}\label{Layer1} If $R$ is a layered pre-\domain0 with
 partial
 pre-order $\ge_{\nu}$, then we can extend $\nucong$ and $\ge_{\nu}$ respectively to an equivalence and
 a partial
 pre-order on $\Fun (\tSS,R)$ as follows:

 \begin{enumerate} \eroman

    \item $f \nucong g$ iff $f(\bfa) \nucong g(\bfa), \quad \forall \bfa
 \in \tSS;$ \pSkip

 \item  $f \ge_\nu g$ iff $f(\bfa)\ge_\nu   g(\bfa), \quad \forall \bfa
 \in \tSS.$
 \end{enumerate}
 \end{lem}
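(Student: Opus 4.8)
The plan is to note that both relations on $\Fun(\tSS,R)$ are, by construction, the pointwise liftings of the corresponding relations on $R$, so each defining property transfers coordinate-by-coordinate. I would begin by recording that on $R$ itself $\nucong$ is an equivalence relation (reflexivity from $\nu_{k,k}=\id_{R_k}$, symmetry by definition, and transitivity from the composition law $\nu_{m,\ell}\circ\nu_{\ell,k}=\nu_{m,k}$, pushing the two witnessing equalities into a common layer $R_{m''}$) and that $\ge_\nu$ is a partial pre-order on $R$ (as in the discussion around Proposition~\ref{maxplus} and in Proposition~\ref{equivlem1}). These facts are already in force in the theory, so the lemma is really the routine observation that they survive the passage to functions.

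Then for $(i)$: $f\nucong f$ since $f(\bfa)\nucong f(\bfa)$ for every $\bfa\in\tSS$; symmetry is immediate; and if $f\nucong g$ and $g\nucong h$, then for each $\bfa$ one has $f(\bfa)\nucong g(\bfa)\nucong h(\bfa)$, hence $f(\bfa)\nucong h(\bfa)$ by transitivity in $R$, so $f\nucong h$. For $(ii)$: reflexivity and transitivity are obtained the same way — $f(\bfa)\ge_\nu f(\bfa)$ for all $\bfa$ gives $f\ge_\nu f$, while $f\ge_\nu g\ge_\nu h$ forces $f(\bfa)\ge_\nu g(\bfa)\ge_\nu h(\bfa)$ for each $\bfa$, hence $f(\bfa)\ge_\nu h(\bfa)$, hence $f\ge_\nu h$. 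This establishes the equivalence and the partial pre-order claimed.

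Although not demanded by the statement, I would add the one-line remark that $\ge_\nu$ on $\Fun(\tSS,R)$ is moreover compatible with the \semiring0 operations — $g\ge_\nu h$ implies $fg\ge_\nu fh$ and $f+g\ge_\nu f+h$ — since each such inequality is checked pointwise from the analogous compatibility in $R$ (cf.\ Proposition~\ref{equivlem1}), recalling that $\Fun(\tSS,R)$ is already a layered pre-\domain0 by Lemma~\ref{props}(ii). I do not expect a genuine obstacle here; the only place where a little care is needed is the transitivity of $\nucong$ (and of $\ge_\nu$) on $R$ itself — once that is in hand, the pointwise transfer to $\Fun(\tSS,R)$ is purely formal.
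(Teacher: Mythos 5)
Your proof is correct and follows the same route as the paper, which simply states that the lemma is "an easy point-by-point verification"; you have merely spelled out the pointwise checks of reflexivity, symmetry, and transitivity explicitly. The added remark on compatibility with the operations is harmless extra content already covered elsewhere (cf.\ Lemma~\ref{props}).
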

\begin{proof}  An easy point-by-point verification.
\end{proof}

 We usually start with a given layered \domain0 $R$, and then
 apply Lemma~\ref{Layer1}.
This
rather general framework encompasses some very useful concepts,
including polynomials,  Laurent polynomials, etc. 

\subsection{Polynomials}

We want to understand tropical algebraic geometry in terms of
roots of polynomials. Specifically, we work in the sub-\semiring0
of $\Fun (\tSS, R)_{\ge 1}$ (for  $\tS \subseteq   R^{(n)}$) defined by formulas in the elementary
language under consideration, which we call \textbf{polynomial
functions}. Thus, in the usual language of \semirings0, $R[\Lm]:=
R[\lm_1, \dots, \lm_n]$ denotes the usual polynomials, whose image
in $\Fun (\tSS, R)_{\ge 1}$  we denote as  $\Pol (\tSS,R)$.
 If we adjoin the symbol $^{-1}$
(for multiplicative inverse), then $\Lau (\tSS,R)$ denotes the
image of the Laurent polynomials $R[\Lm, \Lm^{-1}]:= R[\lm_1,
\lm_1^{-1} \dots, \lm_n, \lm_n^{-1}].$ If our language includes
the symbol $ \sqrt[m]{\phantom{w}},$ i.e., if we are working over
a divisible monoid, then we would consider polynomials with
rational powers, which are well-defined in view of Equation \eqref{eq:Frobenius}; although this case is important, we do not treat
it here explicitly because of the extra notation involved.

 Thus,
we are working in the full subcategories $\Pol(\tSS,\SOMon)$ and
$\Lau(\tSS,\SOMon)$ of $\Fun(\tSS,\SOMon)$, and the full
subcategories $\Pol(\tSS,\SeRo)$ and $\Lau(\tSS,\SeRo)$ of $
\Fun(\tSS,\SeRo)$. The functor of Proposition~\ref{maxplus10}
restricts to faithful  functors $\Pol(\tSS,\OMon) \to
\Pol(\tSS,\SeRo)$ and $\Lau(\tSS,\OMon) \to \Lau(\tSS,\SeRo)$.

 The difficulty with treating polynomials (as well as Laurent
polynomials) as functions could be that two polynomial functions
may agree on $R$ but differ on some extension \semiring0 of~$R$.
Fortunately, in \cite[Theorem~5.33 and
Corollary~5.34]{IzhakianKnebuschRowen2009Refined} we saw that
taking $\tlR$ to be the 1-divisible closure of the \bisemifield0
of fractions of~$R$,  if two polynomial functions differ on some
extension of $R$, then they already differ on $\tlR.$ Thus, it
suffices to look at $\Pol (\tSS,\tlR)$ and $\Lau (\tSS,\tlR)$.
Strictly speaking, this was proved only for the specific
construction used in \cite{IzhakianKnebuschRowen2009Refined}, so
to work with  layered \bidomains0 we need to generalize the
construction of 1-divisible closure to $L$-layered \bidomains0.

\begin{example}[1-localization]\label{loc1} If $R$ is an $L$-layered \bidomain0, then
taking any multiplicative submonoid $S $ of $ R_1$, we can form
the localization $S^{-1}R$ as a monoid, and define addition via
$$ \frac au + \frac bv = \frac {av+bu}{uv}$$ for $a,b \in R,$
$u,v \in S$. $S^{-1}R$ becomes an $L$-layered \bidomain0 when we
define $s(\frac au) = s(a).$ There is a natural layered
homomorphism $R \to S^{-1}R$ given by $a \mapsto \frac a \rone ,$
which is injective since $R_1$ is cancellative.

Taking $S = R_1$, we call $S^{-1}R$ the  $L$-layered
\textbf{\bisemifield0 of fractions} of $R$; this construction
shows that any uniform $L$-layered \bidomain0 can be embedded into
a uniform $L$-layered \bisemifield0.
  \end{example}

   \begin{example}[$\nu$-divisible closure]\label{divcl1} We say that an $L$-layered \bidomain0  $R$ is $\nu$-\textbf{divisible} if for each $a\in R$
and $n \in \Net$ there is $b\in R$ such that $b^n \equiv a$ under
the equivalence of Definition~\ref{equivrel}. Note that if $s(a) =
\ell$ then $s(b) = \root n \of \ell.$ This implies that $L$ must
be closed under taking $n$-th roots for each $n$. Assuming that
$L$ is a group satisfying this condition, one can construct the
$\nu$-divisible closure, sketched as follows:

\begin{description}
\item[Step 1] Given $a \in R_\ell,$ adjoin a formal element $b \in
R_{\root n \of \ell}$, and consider all formal
sums\begin{equation}\label{prim2} f(b) :=  \sum_{i}  \al_i
 b^{i}: \quad \al_i \in R.\end{equation}
($  \sum_{i}  \al_i
 b^{i}$ is to be considered as the $n$-th root of  $ \sum_{i}  \al_i^{n}
 a^{i}.$)

  Define
$R_b$ to be the set of all elements of the form \eqref{prim2},
where any  $\a \in R $  is identified with $\al b^0$. 
We can
define the sorting map $s: R_b \to L$ via $$s(f(b)) =\sum _{i}
s(\al_i) \root n \of \ell^i \in L.$$

 We define $\nucong$ on $R_b$ (notation as in \eqref{prim2}) by saying
  $f_b \nucong \sum_{j=0}^{t'}
 \al'_j
 b^{j}$ if  $ \sum _i \al_i^n
a^{i} \nucong \sum _j  {\al'_j}^n
 a^{j}$. In particular, $f_b \nucong c$
for $c\in R$ if  $  \sum _i \al_i^n a^{i}\nucong c^n$. Likewise,
we write $f_b >_\nu f'_b: = \sum_{j=0}^{t'}
 \al'_j
 b^{i}$  if $ \al_i^n
a^{i} > _\nu  {\al'_j}^n
 a^{j}$.

Now we can define addition on $R_b$ so as to be $\nu$-bipotent,
where for   $\nu$-equivalent elements we define $f(b)+g(b)$ to be
their formal sum (combining coefficients of the same powers of
$b$);
 multiplication is then defined in the obvious way, via distributivity over addition.
 Now $R_b$ is an $L$-layered \bidomain0, in view of
 Proposition~\ref{removesort}.

\pSkip

\item[Step 2] Using Step 1 as an inductive step, one can construct
the $\nu$-divisible closure by means of Zorn's Lemma, analogously
to the well-known construction of the algebraic closure,
cf.~\cite[Theorem~4.88]{Row2006}. \pSkip
\end{description}
  \end{example}

 \begin{example}[Completion]\label{compl} One can construct the \textbf{completion} of any $L$-layered
    \domain0
 $R$ as follows: We define $\nu$-\textbf{Cauchy sequences} in $R$ to
be those sequences $(a_i) := \{ a_1, a_2, \dots \}$ which become
Cauchy sequences modulo $\nucong,$ but which satisfy the extra
property that there exists an $m$ (depending on the sequence) for
which $s(a_i) = s(a_{i+1})$, $\forall i \ge m.$ This permits us to
define the \textbf{sort} of the $\nu$-Cauchy sequence to be
$s(a_m).$ Then we define the \textbf{null} $\nu$-\textbf{Cauchy
sequences} in $R$ to be those sequences $(a_i) := \{ a_1, a_2,
\dots \}$ which become null Cauchy sequences modulo $\nucong,$ and
the completion $\htR$ to be the factor group.

We also extend our given pre-order $\nu$ to $\nu$-Cauchy sequences
by saying that $(a_i) \nucong (b_i)$ if $(a_ib_i^{-1})$ is a null
$\nu$-Cauchy sequence, and, for $(a_i) \not \nucong (b_i)$, we say
$(a_i)
>_\nu (b_i)$ when there is $m$ such that $a_i >_\nu b_i$ for all
$i>m$. The completion $\htR$ becomes an $L$-layered \bidomain0
under the natural operations, i.e., componentwise multiplication
of $\nu$-Cauchy sequences, and addition given by the usual rule
that
\begin{equation}\label{144}
(a_i) + (b_i)=\begin{cases}  (a_i)& \quad\text{if}\ (a_i) >_\nu
(b_i),\\ (b_i)& \quad\text{if}\ (a_i) < _\nu (b_i),\\
 \nu_{s(a_i)+s(b_i),s(a_i+b_i)}(a_i+b_i)& \quad\text{if}\ (a_i)\nucong (b_i).\end{cases}\end{equation}

(In the last line, we arranged for the layers to be added when the
$\nu$-Cauchy sequences are $\nu$-equivalent.) It is easy to verify
$\nu$-bipotence for $\htR$.
  \end{example}

These constructions are universal, in the following sense:

\begin{prop}\label{univ} Suppose there is an embedding $\vrp: R \to F'$ of a uniform $L$-layered
\domain0 $R$  into a  $1$-divisible, uniform $L$-layered
\bisemifield0 $F',$ and let $F$ be the $1$-divisible closure of
the \bisemifield0 of fractions of $R$. Then $F'$ is an extension
of $F$. If $F'$ is complete with respect to the $\nu$-pre-order,
then we can take $F'$ to be an extension of the completion of $F$.
\end{prop}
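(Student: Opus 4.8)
The plan is to build the extension $F \hookrightarrow F'$ in stages, following precisely the three universal constructions (1-localization, $\nu$-divisible closure, completion) from Examples~\ref{loc1}, \ref{divcl1}, and \ref{compl}, at each stage using the universal property of that construction together with the hypothesis that $F'$ already possesses the relevant closure property. First I would pass from $R$ to its \bisemifield0\ of fractions $S^{-1}R$ with $S = R_1$: since $F'$ is an $L$-layered \bisemifield0, every element of $S$ maps under $\vrp$ to an invertible element of $F'$, so $\vrp$ extends uniquely to a layered homomorphism $S^{-1}R \to F'$ by $\frac au \mapsto \vrp(a)\vrp(u)^{-1}$; this is injective because $\vrp$ is and $R_1$ is cancellative. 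Next, since $F'$ is $1$-divisible, I would extend across the $\nu$-divisible closure: working through Step~1 of Example~\ref{divcl1}, for each $a \in R_\ell$ and each $n$, an $n$-th root $b$ of $a$ adjoined to form $R_b$ maps to any chosen $n$-th root of $\vrp(a)$ in $F'$ (which exists by $1$-divisibility, after adjusting layers via Proposition~\ref{removesort}), and one checks this respects $\nucong$, $\ge_\nu$, and the sorting map by the defining formulas; iterating via Zorn's Lemma (Step~2) extends $\vrp$ to all of $F$, the $1$-divisible closure of $S^{-1}R$. This shows $F'$ is an extension of $F$.

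For the final clause, suppose in addition $F'$ is complete with respect to the $\nu$-pre-order. Then I would extend the embedding $F \hookrightarrow F'$ over the completion $\htF$ of $F$ from Example~\ref{compl}: given a $\nu$-Cauchy sequence $(a_i)$ in $F$, its image $(\vrp(a_i))$ is a $\nu$-Cauchy sequence in $F'$ (layered homomorphisms preserve $\ge_\nu$ by Proposition~\ref{strt}, and the stabilization-of-sorts condition is preserved since $s'(\vrp(a_i))$ is eventually constant once $s(a_i)$ is and $\vrp$ respects the sorting maps via axiom M1 together with M2 in the uniform case where $\nucong$ forces equality of layered elements by Proposition~\ref{equalnu}), hence converges to a unique limit in $F'$; sending $[(a_i)] \mapsto \lim \vrp(a_i)$ is well-defined (null $\nu$-Cauchy sequences go to null ones) and is a layered homomorphism by the componentwise description of the operations in \eqref{144}. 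Injectivity follows because a $\nu$-Cauchy sequence whose image is null is itself null, using injectivity of $\vrp$ on $F$ together with the $\nu$-bipotence structure. Thus $F'$ is an extension of $\htF$.

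The main obstacle I expect is the bookkeeping of layers (sort indices) throughout the extension maps — in particular, checking that the chosen $n$-th roots in $F'$ during the $\nu$-divisible closure step can be selected with the correct layer $\root n \of \ell$, and that the sorting map $s(f(b)) = \sum_i s(\al_i)\root n \of \ell^i$ is matched by $s'$ on the $F'$ side. This is where one genuinely uses that $L$ is a group closed under $n$-th roots and that $F'$ is \emph{uniform}, so that Proposition~\ref{removesort} lets one replace sort transition maps by multiplication by the idempotents $e_m$; once that dictionary is set up, the verifications that $\nucong$, $\ge_\nu$, and addition (via $\nu$-bipotence, Axiom~B$'$) are preserved are routine point-by-point checks analogous to those in Lemma~\ref{Layer1} and Proposition~\ref{equivlem1}. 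A secondary subtlety is the use of Zorn's Lemma: one must phrase the poset of "partial extensions $R \subseteq R' \subseteq F'$ together with a compatible layered embedding into $F'$" correctly so that chains have upper bounds and maximal elements are genuinely $\nu$-divisibly closed, exactly as in the classical algebraic-closure argument cited as \cite[Theorem~4.88]{Row2006}.
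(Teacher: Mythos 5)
Your proposal is correct and follows essentially the same route as the paper's own (outline) proof: first embed the \bisemifield0\ of fractions via $\frac{a}{u}\mapsto \vrp(a)\vrp(u)^{-1}$, then extend over the $1$-divisible closure using $1$-divisibility of $F'$ by sending $\root m \of a \mapsto \root m \of {\vrp(a)}$, and finally extend over the completion using completeness of $F'$. The extra detail you supply on layer bookkeeping and the Zorn's Lemma step is consistent with what the paper leaves as "easily checked."
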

\begin{proof} This is standard, so we just outline the argument.
First we embed the $L$-layered \bisemifield0 of fractions of $R$
into $F'$, by sending $\frac {b} {a_1} \to \frac
{\vrp(b)}{\vrp(a_1)}.$ This map is 1:1, since if $\frac {b} {a_1}
= \frac {d} {c_1},$ then $c_1 b = a_1d,$ implying $ \vrp(c_1 b) =
\vrp(a_1d),$ and thus $\frac {\vrp(b)}{\vrp(a_1)} = \frac
{\vrp(d)}{\vrp(c_1)}.$ Thus, we may assume that $R$ is an
$L$-layered \bisemifield0. Now we define the map $F \to F'$ by
sending $\root m \of a  \to \root m \of \vrp(a)$, for each $a \in
F_1.$ This is easily checked to be a well-defined, 1:1 layered
homomorphism.

In case $F'$ is complete, then we can  embed  the completion of
$F$  into $F'$. (The completion of a $1$-divisible \bisemifield0
is $1$-divisible, since taking roots of a $\nu$-Cauchy sequence in
$F_1$ yields a $\nu$-Cauchy sequence.)
\end{proof}

\begin{cor} Any $L$-layered homomorphism $\Phi: R\to R'$ of uniform $L$-layered \bidomains0 extends
uniquely to an $L$-layered homomorphism $\tlPhi: \tlR \to \tlR'$.
\end{cor}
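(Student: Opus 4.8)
The plan is to build $\tlPhi$ by the very construction used in the proof of Proposition~\ref{univ}, after noting that that argument never uses injectivity of the given map: it needs only that the source is a uniform $L$-layered \bidomain0 and the target a $1$-divisible, uniform $L$-layered \bisemifield0, and it produces a layered homomorphism, which happens to be injective when the input is. Concretely, first compose $\Phi$ with the canonical embedding $R'\hookrightarrow\tlR'$ to get a layered $L$-homomorphism $\psi: R\to\tlR'$; since $\tlR'$ (built via Examples~\ref{loc1} and~\ref{divcl1}) is a $1$-divisible, uniform $L$-layered \bisemifield0 and $L$ is a group, \emph{every} element of $\tlR'$ is invertible---the tangible monoid $\tlR'_1$ is a group and, by Axiom~A3$'$ (Proposition~\ref{tang111}), each $e_k$ has inverse $e_{k^{-1}}$. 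Hence I extend $\psi$ over the $L$-layered \bisemifield0 of fractions $Q(R):=R_1^{-1}R$ of Example~\ref{loc1} by $\bar\psi(b/u):=\psi(b)\psi(u)^{-1}$ (well defined, since $wb=ud$ in $R$ forces $\psi(b)\psi(u)^{-1}=\psi(d)\psi(w)^{-1}$), and then, following the inductive construction of Example~\ref{divcl1}, at the step adjoining a root $b=\root n\of a$ I send each $f(b)=\sum_i\alpha_i b^i\in R_b$ to the unique $n$-th root in $\tlR'$ of the already-defined value of $\sum_i\alpha_i^n a^i$ (an $n$-th root exists because $\tlR'$ is $1$-divisible). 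Transfinite induction (Zorn's Lemma, as in Example~\ref{divcl1}, Step~2) then assembles a layered $L$-homomorphism $\tlPhi: \tlR\to\tlR'$ whose restriction to $R$ is $\Phi$; alternatively one simply invokes Proposition~\ref{univ} itself with $\psi$ in place of the embedding there.

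For uniqueness, let $\Psi_1,\Psi_2: \tlR\to\tlR'$ be layered $L$-homomorphisms both restricting to $\Phi$. They agree on $R$, hence on $Q(R)$ because $\Psi_i(b/u)=\Psi_i(b)\Psi_i(u)^{-1}$ is forced, and hence, by induction along the chain of root-adjunctions of Example~\ref{divcl1}, on each $R_b$: if $\Psi_1=\Psi_2$ on the current ring $S$ and $b=\root n\of a$ with $a\in S$, then $\Psi_1(b)$ and $\Psi_2(b)$ are both $n$-th roots in $\tlR'$ of the common value $w$. Writing $w=e_k c$ with $k\in L$ and $c\in\tlR'_1$ (uniformity, Proposition~\ref{unif1}), every $n$-th root of $w$ has the form $e_j d$ with $j^n=k$ in $L$ and $d^n=c$ in $\tlR'_1$ (Axiom~A3$'$ together with uniformity); here $d$ is the unique $n$-th root of $c$ in the torsion-free abelian group $\tlR'_1$ (an ordered abelian group, hence torsion-free), while $j$ is the unique $n$-th root of $k$ in $L$---the $n$-th power map on $L$ being injective, as is already presupposed when one writes $s(b)=\root n\of{s(a)}$. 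Thus $\Psi_1(b)=\Psi_2(b)$; since $\tlR$ is generated over $R$ by such roots, $\Psi_1=\Psi_2$.

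The single step that is not pure bookkeeping is verifying that the prescription on $R_b$ really is a well-defined layered homomorphism---i.e.\ that it respects the $\nu$-bipotent addition and the sorting map that Example~\ref{divcl1} imposes on $R_b$---which is precisely what is ``easily checked'' in the proof of Proposition~\ref{univ}. This rests on Proposition~\ref{strt} (a layered homomorphism preserves $\ge_\nu$), on M2 (it preserves $\nucong$), on the Frobenius-type relation linking $f(b)$ in~\eqref{prim2} with $\sum_i\alpha_i^n a^i$ (cf.\ Remark~\ref{Fro0}), and---crucially---on the uniformity of $\tlR'$, which by Proposition~\ref{equalnu} lets one deduce equality from $\nucong$ plus equality of sorts. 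Everything else---the passage through the embeddings, the well-definedness together with M1 and M2 for $\bar\psi$ on $Q(R)$, and the transfinite induction---is routine.
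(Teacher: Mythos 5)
Your proposal is correct and follows essentially the same route as the paper: the paper's own proof simply invokes the universality of the \bisemifield0-of-fractions and $1$-divisible-closure constructions (i.e.\ Proposition~\ref{univ}) to obtain a unique extension on the tangible component, which then determines the map on all layers via $e_\ell\mapsto e'_\ell$. Your write-up is a faithful, more detailed unwinding of that argument, including the well-definedness on fractions, the root-adjunction step, and the uniqueness of $n$-th roots in the torsion-free ordered group $\tlR'_1$ and in $L$, all of which the paper leaves implicit.
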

\begin{proof} Each of the constructions of \bisemifield0
of fractions, 1-divisible closure, and completion are universal,
so applying them in turn yields us a unique ordered monoid
homomorphism from the tangible component~$R_1$ to $R'_1$, which
readily extends (uniquely) to all of $R$ since $e_\ell \mapsto
e'_\ell$.\end{proof}

\subsection{Roots and layered varieties}


 In order to understand affine layered geometry, we need to know
more about the affine layered algebraic sets. We fix the sorting
\semiring0 $L$ for convenience, although one could also let $L$
vary, and think of $\tS$ as a subset of $R^{(n)}$, where  $R$ is
an $L$-layered \bidomain0. Standard tropical geometry can be
recaptured
by taking $\tS \subseteq   R_1^{(n)}$.

\begin{defn} Suppose  $f,g\in \Pol(\tSS,R) $. We say that  $f$ \textbf{dominates} $g$ at $\bfa \in \tS$ if $f(\bfa ) \nuge g(\bfa)$.
Write $f = \sum_i f_i$ as a sum of monomials. The \textbf{dominant
part} $f _\bfa$ of $f$ at $\bfa$ is the sum of all those $f_i$
dominating~$f$ at $\bfa,$ i.e., for which  $f_i (\bfa ) \nucong
f(\bfa ).$ We write $f|_X$ for the restriction of $f$ to a
nonempty subset $X$ of $\tSS$.
\end{defn}

\begin{defn}\label{def:corn}
An element $\bfa \in \tSS$ is a \textbf{corner root} of $f =
\sum_i f_i$ if $f(\bfa)$ is $s(f_i(\bfa))$-ghost for every
monomial $f_i$ of $f$. (Thus, $ f _\bfa$ contains at least two
$f_i$.) The (affine) \textbf{corner locus}
 of
$f\in \Pol(\tSS,R)$
 with respect to  the set $\tSS$ is
$$\tZ_\corn (f;\tSS) : = \{ \bfa \in \tSS \ds : \bfa
\text{ is a corner root of }f \}.$$ We write $\tZ_\corn (f)$ for
$\tZ_\corn (f;R^{(n)})$.

An element $\bfa \in \tSS$ is a \textbf{cluster root} of $f$ if
$f(\bfa) = f_i(\bfa)$ is $1$-ghost for some monomial $f_i$ of $ f
_\bfa$.  (Thus, $ f _\bfa$ is comprised of a single monomial
$f_i$.) The \textbf{combined ghost  locus}
 with respect to  the set $\tSS$ is
$$\tZ_\tot (f;\tSS) : = \{ \bfa \in \tSS \ds : \bfa
\text{ is a cluster or corner root}\}.$$
%

%
%

 The \textbf{(affine) corner
algebraic set} and the \textbf{(affine) algebraic set} of a subset
$A \subseteq \Pol(\tSS,R)$ with respect to the set $\tSS$,
  are  respectively
$$\tZ_\corn (A;\tSS) : = \bigcap _{f\in A} \tZ_\corn(f;\tSS), \qquad  \tZ_\tot (A;\tSS) : = \bigcap _{f\in A} \tZ_\tot(f;\tSS).$$
%
  When $\tSS$ is unambiguous (usually $R^{(n)}$), we
write $\tZ_\corn(A)$ and $\tZ_\tot(A)$  for $\tZ_\corn(A;\tSS)$
and $\tZ_\tot(A;\tSS)$ respectively.
\end{defn}
%

\begin{example} Here are basic examples of  affine layered
algebraic sets.
\begin{enumerate} \eroman
    \item We view $R^{(n)}$ as $ \tZ_\tot(\emptyset)$. Note that also $R^{(n)}= \tZ_\tot(\{\bfa\})$
for any ``ghost'' constant $\bfa \in R_{>1}.$  \pSkip

    \item The empty set is an algebraic set: $\emptyset = \tZ(\{\bfa\}) $
    for any $\bfa \in R_1$. \pSkip

    \item A single point $\bfa = (a_1, \dots,a_n) \in \tSS$,  where $\tS \subset R^{(n)}_1$,  is a  corner algebraic
    set: $$\bfa = \tZ( \{ \lm_1 + a_1, \dots, \lm_n + a_n\}).$$

    \item
 The familiar \textbf{tropical line} in the affine plane is $\tZ_\corn
 (f;\tSS)$ where $f$ is linear of the form $$\al \la_1 + \bt \la _2 +
 \gm,$$ with $\al, \bt, \gm \in R_1$, and $\tS = R_1^{(2)}$.
 On the other hand, for $I$ as in Example~\ref{tanZar}, $\tZ_\corn
 (I ;\tSS)$ restricted to $R_1^{(2)}$ is the union of two perpendicular rays, and does
not satisfy the celebrated ``balancing condition'' of tropical
geometry.
\end{enumerate}
\end{example}

 \begin{example}\label{lay02} A more sophisticated
example: Whereas in the standard supertropical theory we have 
$$(x+y+z)(xy+xz+yz) = (x+y)(x+z)(y+z),$$
they differ in the layered theory, since $xyz$ has layer 3 in the left side but only layer 2 in the right side.
Thus the layered theory permits greater refinement in reducing tropical varieties.
\end{example}

%

\section{The tropicalization functor on polynomials and their roots}\label{Kapr}

The tropicalization map $\TropfunoneL$ of \S\ref{RedTro},
Equation~\eqref{morph201}, extends readily to polynomials, i.e.,
to the functor $\hTropfunoneL :  \Pol (\tSS,\SValmon) \to
\Pol(\tSS,\LayDom),$ where we define
$$\hTropfunoneL \bigg(\sum_\bfi a_\bfi \la_1 ^{i_1}\cdots \la_n ^{i_n}\bigg) =
 \sum_\bfi  \TropfunoneL   (a_\bfi) \la_1 ^{i_1}\cdots \la_n
^{i_n}   =  \sum_\bfi \xl{v(a_\bfi)}{1} \la_1 ^{i_1}\cdots \la_n
^{i_n},
 $$ for $\bfi = (i_1, \dots, i_n),$ (and analogously for morphisms).

 If $\bfa \in F^{(n)}$ is a root of $f
\in F[\Lambda],$ then clearly $v(\bfa)$ is a corner root of
$\hTropfunoneL(f)$. We are interested in the opposite direction.
One of the key results of tropical mathematics is Kapranov's
theorem ~\cite{IMS}, which    says that for any
polynomial $f(\la_1, \dots, \la_n)$, any corner root of the
tropicalization of $f$ has a pre-image which is a root of~$f$.
This assertion also works for finite sets of polynomials, and
thus for ideals, in view of \cite{Payne09}. Our objective in this
section is to understand this result in terms of the appropriate
layered categories.
%

\begin{rem}
Let $\mcA: = F[\lm_1,\dots, \lm_n]$. Then the Puiseux series
valuation $\Val$ extends naturally to a map  $\Val: \mcA \to
R[\lm_1,\dots, \lm_n]$, where each $\la_i$ is fixed. If $\tI$ is
an ideal of $\mcA$, then $\Phi(\tI)$ is an ideal of $\Phi(\mcA),$
so this ``tropicalization'' process sends ideals of algebras to
\semiring0 ideals, and transfers many properties from the
``classical algebraic'' world to the ``tropical'' world. One
property which it does not preserve is
 generation of ideals. For example, two
different polynomials $f,g$ of $\tI$ might have the same leading
monomial and the same tropicalization, and then $\Phi(f + (-g))$
cannot be described in terms of $\Phi(f)$ and $\Phi(g) = \Phi(f).$
One can bypass this particular difficulty by using Gr\"{o}bner
bases (since they are comprised of polynomials of different lowest
orders), but the necessity of choosing the ``right'' generators
raises serious issues in tropical geometry.  Fortunately, this
concern is not critical in the current paper, since we do not
require generators for studying the relevant categories.
\end{rem}

\begin{rem}\label{Puis} We start with a triple $(F,\tG,v)$, where $F$ for example may
be the algebra of Puiseux series over $\C$, $\tG= (\Real,+)$, and
$v: F \to \tG$ the valuation $\nVal$. Any point $(\a_1, \dots,
\a_n)\in F^{(n)}$ can be considered as a valuation $ \hat v$
extending $v$, where $\hat v (\la_i) = \a_i.$ This can be extended
to the group $G $ generated by the $\la_i$ and $\la _i^{-1}$ in
the ring of Laurent series over $F$. But if $\mfp$ is a prime
ideal of $F[\Lambda]:= F[\la_1, \dots, \la _n]$, then the natural
image of $G$ in the field of fractions $K$ of  $F[\Lambda]/\mfp$
is a group $\bar G$, and Bieri-Groves~\cite{BG} describe the
possible extensions of $v$ to $F[\bar G].$ Namely, $\bar G$, being
a finitely generated Abelian group, can be written as the direct
sum of a free Abelian group of some rank $m$ and a torsion group
$T$. We let $F[\bar G]$ denote the $F$-subalgebra of $K$ generated
by $\bar G$. After extending the valuation $v$ to the free Abelian
group, one sees by an exercise of Bourbaki~\cite{B} that further
extensions to $F[\bar G]$ correspond to corner roots of the
polynomials of $\mfp.$
\end{rem}

This is explained in the proof of \cite[Theorem~A]{BG}, and can be
explained tropically in terms of the  proof of Bourbaki's exercise:

If $f(a) = 0$ then two of the monomials of $\TropfunoneL(f)$ must
be equal and dominant when evaluated at~ $a$, say $\a a^i = \bt
a^j$, so one can extend $v$ to a valuation $\hat v$ on $F[a]$ given by $\hat v(a) = \frac {v(\bt \a^{-1})}{i-j}.$

This discussion could be formulated in the language of
\cite{IzhakianKnebuschRowen2009Valuation},
 \cite{IKR3}, \cite{IKR5}, as  elaborated in \cite[Remark~6.6]{IKR6}.

\begin{defn} As in Remark~\ref{cos1}, suppose $F$ is an arbitrary field with valuation $v: F \to \tG$, having valuation ring $R$ and
associated graded algebra~$\gr (F)$.  For any $f   \in F[\la]$, we
define $\bar f$ to be its natural image in $\gr(F)[\la].$ For $\mfa
\triangleleft F[\Lambda],$ we define the \textbf{exploded
tropicalization} $\bar \mfa$ of $\mfa$ to be $$\{ \bar f : f \in \mfa\}.$$

An element $\bfa := ( a_1, \dots,  a_n)$ of $ \gr(F)$ is a
\textbf{graded  root} of a polynomial $\bar f \in \gr (F)[\la]$ if
$$\bar f(a_1+  F_{>s(a_1)}, \dots, a_n  +F_{>s(a_n)}) = 0$$ in $\gr (F)$.
(Intuitively, $s(f(a_1, \dots, a_n))$ is larger than expected.)

\end{defn}
%
%

We take $F $ to be a Henselian field with respect to a
valuation~$v$ whose residue field is algebraically closed. For
example, we could take $F= \mathbb K$, the field of Puiseux
series over $\mathbb C.$
 We have two areas
of interest when studying Puiseux series -- the \semifield0 (which
corresponds to the value group) and the residue field, which is a
copy of $\mathbb C.$  We can combine these using the `exploded'
structure of Definition~\ref{cos2}. Given a polynomial $f\in
\gr(F)[\Lambda],$ we define its \textbf{corner exploded roots} to
be
$$\big \{ \text{Graded roots }\bfa = (\hat a_1, \dots, \hat a_n) \in F_1^{(n)}
  \text{ of }f: s(f(\hat a_1, \dots, \hat a_n)) = 0 \big \},$$
 cf.~Remark~\ref{cos30}. The \textbf{corner exploded variety} of an ideal $\hat \mfa$ of $\gr(F)$ is
 the set of common  corner exploded roots of the
 polynomials of $\hat \mfa$.

 The standard
valuation-theoretic proofs of  Kapranov's theorem show that any
corner root $x$ of $\TropfunoneL(f)$ is the tropicalization of a
point in  the variety  $Z$ defined by $f$. In other words, $x$
lifts to an exploded root of $f$. Payne's theorem~\cite{Payne09}
can be stated as follows:

Suppose $X$ is an affine variety defined by a proper ideal $\mfa$ of
$F[\Lambda],$ and $\bfa$ is an graded root of the exploded
tropicalization $\bar \mfa$ of~$\mfa$. Then the preimage of any point
defined by $\bar \mfa$, if nonempty, is Zariski dense in $X$. This is
the algebraic essence of Parker's `exploded' approach.
%
%

\section {The category of  affine layered geometry}

Our goal in this section is to connect   affine layered geometry
 to a category which can be studied by means of standard
algebraic techniques. This ties in with the algebraic categories
of the previous sections, by means of the coordinate \semiring0,
which is to be studied more thoroughly in a subsequent paper.
Throughout, let $F$ denote a layered \bisemifield0.

\subsection{The Zariski topology}

 We
want to mimic the classical Zariski theory as far as we can,
starting with our layered \bisemifield0 $F$ and describing a
topology on $\tSS,$ a given subset of $F^{(n)}.$

Actually, there are several natural   topologies on $\tSS$.

\begin{defn} Suppose $f = \sum_i f_i$ is written as a sum of
monomials in $\Pol(\tSS,R)$. The ${f_i}$-\textbf{component} of $f$
is
$$D_{f_i}(\tSS) := \{\bfa \in \tSS: f_i(\bfa) = f(\bfa)\}.$$
\end{defn}

Any root of $f$ in a component $D_{f_i}$ must be a cluster root.


 \begin{rem}\label{Zar1} In \cite[Definition~6.5]{IzhakianKnebuschRowen2009Refined},
 we defined the $L$-layered \textbf{component topology} to
 have as its sub-base the components of polynomials of $\Pol(\tSS,R)$.
Note that different components of a polynomial are disjoint, so
open sets here are not necessarily dense in the component
topology. Thus, although it provides useful information, the
component topology is too fine to permit us to develop tropical
algebraic geometry along classical lines.
 We rectify the
situation by defining the \textbf{principal corner open sets} to
be
$$\tD(f;\tS) = \tS \setminus \tZ_{\corn}(f) = \bigcup_{i \in I}
 D_{f_\bfi},$$
where   $f = \sum_{\bfi \in I} f_\bfi$ is written as a sum of
monomials in $\Pol(\tSS,R)$. Put another way, $$\text{ $\tD(f;\tS)
= \{ \bfa \in \tSS: s(f(\bfa)) = s(f_i(\bfa))$ for some monomial
$f_i$ of $f\},$}$$

 The principal corner open sets form a base for a
topology
 on $\tSS$, which we call the \textbf{($L$-layered) corner Zariski topology},
 whose closed sets are
 affine corner algebraic sets.

Analogously, one could respectively take cluster roots and  use
$\tZ_\tot (f;\tSS)$ in place of corner roots and  $\tZ_{\corn}
(f;\tSS)$ to define the \textbf{combined Zariski
 topology}, whose closed sets are the algebraic sets. This is a somewhat
 finer topology, but the corner Zariski topology provides a
 closer analog to the usual notions of tropical geometry, so we
 will use that.
\end{rem}

\begin{lem}\label{top0} The intersection of two principal corner open sets contains
 a nonempty principal corner open set.
\end{lem}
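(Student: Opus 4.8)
The plan is to take as the required set the principal corner open set $\tD(fg;\tS)$ attached to the product $fg$ (which again lies in $\Pol(\tSS,R)$), and to prove the stronger statement that $\tD(fg;\tS)=\tD(f;\tS)\cap\tD(g;\tS)$; then $\tD(fg;\tS)$ is a principal corner open set sitting inside the intersection, and it is nonempty exactly when the intersection is. (For a base-of-topology application all one needs is that any point $\bfa$ of the intersection lies in $\tD(fg;\tS)\subseteq\tD(f;\tS)\cap\tD(g;\tS)$.)

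First I would isolate the working description of membership: writing $h=\sum_i h_i$ as a sum of monomials in $\Pol(\tSS,R)$, one has $\bfa\in\tD(h;\tS)$ if and only if a \emph{single} monomial $h_{i_0}$ satisfies $h_{i_0}(\bfa)=h(\bfa)$, equivalently $h_{i_0}(\bfa)>_\nu h_j(\bfa)$ for every $j\ne i_0$. Indeed, by $\nu$-bipotence $h(\bfa)=\sum_i h_i(\bfa)$ is $\nucong$ to the $\nu$-largest of the $h_i(\bfa)$; if two distinct monomials attain this $\nu$-maximum, Axioms A4 and B force $s(h(\bfa))$ to exceed their common sort, so $h(\bfa)$ cannot equal any $h_i(\bfa)$; if a unique monomial $\nu$-dominates, the remaining terms are absorbed and $h(\bfa)=h_{i_0}(\bfa)$. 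For the bookkeeping below it is convenient to work in the $L$-layered \bisemifield0 of fractions of $R$ (Example~\ref{loc1}), so that $\nucong$ on values becomes the genuine total order of the value monoid $\tG$, which by multiplicative cancellativity of $R$ is strictly compatible with multiplication.

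For the inclusion $\tD(f;\tS)\cap\tD(g;\tS)\subseteq\tD(fg;\tS)$: if $f_{i_0}$, $g_{j_0}$ are the dominant monomials of $f$, $g$ at $\bfa$, let $M$ be the monomial of $fg$ of the same multidegree as $f_{i_0}g_{j_0}$. Every other product $f_ig_j$ contributing to $M$ has $(i,j)\ne(i_0,j_0)$, hence $f_i(\bfa)g_j(\bfa)<_\nu f_{i_0}(\bfa)g_{j_0}(\bfa)$ by strict $\nu$-monotonicity of multiplication, so $M(\bfa)=f_{i_0}(\bfa)g_{j_0}(\bfa)$; the same estimate makes every monomial of $fg$ of other multidegree strictly $\nu$-smaller at $\bfa$, whence $(fg)(\bfa)=f(\bfa)g(\bfa)=f_{i_0}(\bfa)g_{j_0}(\bfa)=M(\bfa)$ and $\bfa\in D_M(\tSS)\subseteq\tD(fg;\tS)$. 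For the reverse inclusion, let $M$ be the dominant monomial of $fg$ at $\bfa$ and choose a $\nu$-maximal contributing product $f_{i_1}g_{j_1}$, so that $f_{i_1}(\bfa)g_{j_1}(\bfa)\nucong M(\bfa)=f(\bfa)g(\bfa)$ with $f(\bfa)\nuge f_{i_1}(\bfa)$ and $g(\bfa)\nuge g_{j_1}(\bfa)$; cancellativity forces $f_{i_1}(\bfa)\nucong f(\bfa)$ and $g_{j_1}(\bfa)\nucong g(\bfa)$. Were some $f_{i_2}$ with $i_2\ne i_1$ also $\nucong f(\bfa)$ at $\bfa$, the monomial of $fg$ of the multidegree of $f_{i_2}g_{j_1}$ (different from that of $M$) would be $\nucong(fg)(\bfa)$ at $\bfa$, contradicting uniqueness of $M$; so $f_{i_1}$ is the unique dominant monomial of $f$ at $\bfa$, giving $\bfa\in\tD(f;\tS)$, and symmetrically $\bfa\in\tD(g;\tS)$.

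The main obstacle I anticipate is precisely the first step — characterizing when $h(\bfa)$ coincides with a single monomial value — since one must control the sort (layer) and the $\nu$-class (value) at once and invoke that the sort of a sum of $\nu$-equivalent elements strictly grows (Axioms A4 and B), in tandem with strict monotonicity of multiplication coming from multiplicative cancellativity; reducing to the \bisemifield0 of fractions is what keeps this clean. After that, the two inclusions are the usual Minkowski-sum/support bookkeeping for tropical polynomials, transcribed to the layered setting.
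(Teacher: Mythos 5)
Your proposal is correct and follows the same route as the paper: both take the product $fg$ and observe that its principal corner open set is exactly $\tD(f;\tS)\cap\tD(g;\tS)$, since the dominant monomial of $fg$ at a point is the product of the dominant monomials of $f$ and of $g$ there. You supply the details the paper's one-line proof elides (collecting products $f_ig_j$ of equal multidegree into a single monomial of $fg$, and using strict $\nu$-monotonicity from cancellativity for both inclusions), which is exactly where the real content lies.
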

\begin{proof} If $ f(\bfa) = f_ i (\bfa)$ on $D_{f,i}$ and  $ g(\bfa) = g_ j
(\bfa)$ on $D_{g,j}$, then clearly $ fg(\bfa) = f_ i (\bfa)
g_j(\bfa) $  on $D_{f,i} \cap D_{g,j}$ and nowhere else.
\end{proof}

\begin{prop} All open sets in the ($L$-layered) corner Zariski topology
%
 are  dense. \end{prop}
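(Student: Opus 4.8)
The plan is to read the statement as the assertion that the corner Zariski topology on $\tSS$ is irreducible: in any topological space, every nonempty open set is dense if and only if any two nonempty open sets meet. (The empty open set is dense only when $\tSS=\emptyset$, so the claim is to be understood for nonempty open sets.) Thus the entire proof reduces to showing that $U\cap V\neq\emptyset$ for any two nonempty open sets $U,V$ of the corner Zariski topology.

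First I would reduce to basic open sets. By Remark~\ref{Zar1} the principal corner open sets $\tD(f;\tS)$, $f\in\Pol(\tSS,R)$, form a base, so I can pick nonempty basic open sets $\tD(f;\tS)\subseteq U$ and $\tD(g;\tS)\subseteq V$; it then suffices to prove $\tD(f;\tS)\cap\tD(g;\tS)\neq\emptyset$. This is exactly what Lemma~\ref{top0} provides: the intersection of the two principal corner open sets contains a nonempty principal corner open set (concretely, writing $f=\sum_i f_i$ and $g=\sum_j g_j$ as sums of monomials and choosing $f_i$, $g_j$ dominating at points of $\tD(f;\tS)$, $\tD(g;\tS)$, the intersection $D_{f_i}(\tSS)\cap D_{g_j}(\tSS)$ is the $f_ig_j$-component of $fg$, and is nonempty), hence is itself nonempty. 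Therefore $U\cap V\supseteq\tD(f;\tS)\cap\tD(g;\tS)\neq\emptyset$, and every nonempty open set is dense.

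The only real content lies in the nonemptiness that Lemma~\ref{top0} asserts — that the relevant component of $fg$ is not empty, equivalently that $\tZ_\corn(fg)$ is a proper subset of $\tSS$. This is where one uses that $F$ is a layered \bisemifield0: $F_1$ is then a cancellative abelian group and $\tSS$ is rich enough that a polynomial cannot have $\tSS$ as its entire corner locus unless it is degenerate, so when neither $\tD(f;\tS)$ nor $\tD(g;\tS)$ is empty their intersection is not empty either. Granting Lemma~\ref{top0}, the argument is formal, paralleling the classical fact that affine space over an integral domain is irreducible, with the product polynomial $fg$ — whose corner locus controls $\tD(f;\tS)\cap\tD(g;\tS)$ — playing the role of the product of two nonzero elements in a domain. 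I expect no obstacle beyond what Lemma~\ref{top0} already encapsulates.
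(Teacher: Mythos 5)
Your proposal is correct and follows exactly the paper's route: the paper's proof is simply ``Immediate from the lemma,'' i.e.\ it invokes Lemma~\ref{top0} to see that any two nonempty principal corner open sets (which form a base) meet, whence every nonempty open set is dense. Your additional commentary on where the nonemptiness in Lemma~\ref{top0} comes from is a reasonable gloss but not part of the paper's argument, which simply takes the lemma as given.
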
 \begin{proof}
Immediate from the lemma.
 \end{proof}
%
%



\subsection{The coordinate \semiring0}

We can return to algebra via the coordinate \semiring0, just as in
classical algebraic geometry.

\begin{defn}\label{coord2} The \textbf{coordinate \semiring0}
of an affine  layered algebraic set $X \subseteq \tS$, denoted
$F[X],$ is the natural image of the \semiring0 $\Pol(X,F)$.   The
\textbf{Laurent coordinate \semiring0} $F(X)$ is the natural image
of $\Lau(X,F)$; its elements are called the \textbf{regular
functions} of the algebraic set.
\end{defn}

  $\Pol(X,F)$ can be identified with classes of
polynomials over $X$ whose representatives are polynomials having
no inessential monomials, cf.~ \cite[Definition
5.5]{IzhakianKnebuschRowen2009Refined}. We say that a function  in
$F[X]$ (resp.~$F(X)$) is \textbf{tangible} if it can be written as
a tangible polynomial (resp.~Laurent polynomial), i.e., having
coefficients only in $F_1$.

\begin{remark}\label{rmk:XinYcord}
When $X \subset Y$ we have a natural map  $ F[Y]\to F[X]$ obtained
by restricting the domain of the function from $Y$ to $X$.
\end{remark}

 Coordinate \semirings0
correspond naturally to congruences on $\Pol(\tS,F)$ in the
following manner.

\begin{defn}\label{Zarcor01} A nonempty subset $X \subseteq \tSS$ defines the
\textbf{congruence of $X$ on $\Fun(\tS,F)$}, denoted $\Cong _X$,
whose underlying equivalence $\cngA_X$ is given by $$\text{$f
\cngA _X g$ \dss{iff} $f(\bfa) = g(\bfa)$ for every $\bfa \in
X.$}$$ Conversely, given a congruence $\Cong$ on $\Fun(\tS,F)$,
define the \textbf{variety of the congruence}
$$V(\Cong) := \{ \bfa \in \tS : f(\bfa) = g(\bfa) , \  \forall
(f,g) \in \Cong\} \subseteq \tS.$$\end{defn}

It is readily checked that
$$\Cong_{X \cup Y} = \Cong_X \cap \Cong_Y, \qquad \text{for any } X, Y \subset
\tS.$$

\subsection{Zariski
correspondences}

We have various correspondences between varieties and the
algebraic structure.

\subsubsection{The Zariski
correspondence with ideals}

Inspired by the layered Nullstellensatz given in
\cite[Theorem~6.14]{IzhakianKnebuschRowen2009Refined}, the naive
approach would be to define the \textbf{corner ideal} $\tI_\corn
(\tSS)$ of a set $\tSS$ to be $$\{ f \in \Pol (\tSS,F): \bfa
\text{ is a corner root of }f, \quad \forall \bfa \in \tSS \},$$
and $\tI_\tot (\tSS)$   to be $$\{ f \in \Pol (\tSS,F): \bfa
\text{ is in the combined ghost locus of }f, \quad \forall \bfa
\in \tSS \}.$$

This approach misses the mark, somewhat. On the one hand, different congruences can define
the same ideal which is the pre-image of 0.
 On the other hand, there
are ``too many'' ideals, in the sense that not every ideal defines
a variety, and the correct algebraic approach is to utilize
congruences rather than ideals.
 Furthermore, we need somehow to filter out those
varieties obtained by degenerate intersections of hypersurfaces;
this is treated in a later paper under preparation.

\subsubsection{The Zariski correspondence with congruences}
As just noted, it makes more sense to deal with congruences
instead of ideals. We have the usual straightforward but important
inverse Zariski correspondence:

\begin{prop}\label{Zarcor1}
If $\Cong_1 \supseteq \Cong_2,$ then $V(\Cong_1) \subseteq
V(\Cong_2).$ Conversely, if $Y \supseteq X,$ then  $\Cong_Y
\subseteq  \Cong_X$. Consequently, $$\text{$V(\Cong_{V(\Cong)}) =
V(\Cong)$ \ and \ $\Cong_{V(\Cong_X)} = \Cong_X.$}$$ It follows
that there is a 1:1 correspondence between congruences of
varieties and varieties of congruences, given by $X \mapsto
\Cong_X$ and $\Cong \mapsto V(\Cong).$ Furthermore, the coordinate
\semiring0 satisfies $$F[V(\Cong)] \ds \cong \Pol(\tS,F)/\Cong.$$
\end{prop}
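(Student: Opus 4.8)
The plan is to treat this exactly as the classical Galois correspondence: the pair of maps $X\mapsto\Cong_X$ and $\Cong\mapsto V(\Cong)$ is an antitone Galois connection between the poset of subsets of $\tS$ and the poset of congruences on $\Fun(\tS,F)$, and the coordinate-\semiring0 identity at the end then drops out of the isomorphism theorem for congruences recalled in Remark~\ref{cong10}. No step requires computation; the whole argument is formal once the definitions are unwound.

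First I would verify the two monotonicity assertions straight from the definitions. If $\Cong_2\subseteq\Cong_1$ and $\bfa\in V(\Cong_1)$, then $f(\bfa)=g(\bfa)$ for all $(f,g)\in\Cong_1$, hence for all $(f,g)\in\Cong_2$, so $\bfa\in V(\Cong_2)$; and if $X\subseteq Y$ and $(f,g)\in\Cong_Y$, then $f$ and $g$ agree on all of $Y$, hence on $X$, so $(f,g)\in\Cong_X$. Next I would record the two ``unit'' inclusions, equally immediate: $X\subseteq V(\Cong_X)$, because by definition of $\Cong_X$ every relation of $\Cong_X$ holds at every point of $X$, and $\Cong\subseteq\Cong_{V(\Cong)}$, because by definition of $V(\Cong)$ every relation of $\Cong$ holds at every point of $V(\Cong)$. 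The two identities then come by the usual two-line arguments: applying the order-reversing $V$ to $\Cong\subseteq\Cong_{V(\Cong)}$ gives $V(\Cong_{V(\Cong)})\subseteq V(\Cong)$, while specializing $X\subseteq V(\Cong_X)$ to $X=V(\Cong)$ gives the reverse inclusion; dually, applying the order-reversing map $\Cong_{(-)}$ to $X\subseteq V(\Cong_X)$ gives $\Cong_{V(\Cong_X)}\subseteq\Cong_X$, and specializing $\Cong\subseteq\Cong_{V(\Cong)}$ to $\Cong=\Cong_X$ gives the reverse inclusion. From this the $1:1$ correspondence is purely formal: $V$ sends the set of congruences of varieties $\{\Cong_X\}$ onto the set of varieties of congruences $\{V(\Cong)\}$, and it is injective there since $V(\Cong_X)=V(\Cong_Y)$ forces $\Cong_X=\Cong_{V(\Cong_X)}=\Cong_{V(\Cong_Y)}=\Cong_Y$, with inverse $X\mapsto\Cong_X$; surjectivity holds because each $V(\Cong')$ equals $V(\Cong_{V(\Cong')})$ and so lies in the image.

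For the last clause I would use the restriction homomorphism $\rho_X:\Pol(\tS,F)\to\Fun(X,F)$, $f\mapsto f|_X$. Since $\Pol(\tS,F)$ is by definition the image of the polynomial \semiring0 $F[\la_1,\dots,\la_n]$ inside $\Fun(\tS,F)$, the image of $\rho_X$ is the image of $F[\la_1,\dots,\la_n]$ inside $\Fun(X,F)$, namely $\Pol(X,F)$, i.e.\ the coordinate \semiring0 $F[X]$. By Remark~\ref{cong10} the congruence $\{(f,g):\rho_X(f)=\rho_X(g)\}$ induced by $\rho_X$ is precisely the restriction of $\Cong_X$ to $\Pol(\tS,F)$, and $\rho_X$ induces an isomorphism $\Pol(\tS,F)/\Cong_X\osr F[X]$; taking $X=V(\Cong)$ and using $\Cong_{V(\Cong)}=\Cong$ yields $F[V(\Cong)]\cong\Pol(\tS,F)/\Cong$. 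The one point where I expect to need care is precisely this last clause: ``$\Pol(\tS,F)/\Cong$'' must be read as the quotient of $\Pol(\tS,F)$ by the restriction of $\Cong$, and the identity $\Cong_{V(\Cong)}=\Cong$ is available only when $\Cong$ is a congruence of a variety, which is the case under consideration; for a general congruence on $\Fun(\tS,F)$ one obtains only $F[V(\Cong)]\cong\Pol(\tS,F)/\Cong_{V(\Cong)}$ with $\Cong_{V(\Cong)}\supseteq\Cong$. A minor bookkeeping issue is the nonemptiness requirement in Definition~\ref{Zarcor01}, which one handles either by extending $\Cong_\emptyset$ to the full congruence $\Fun(\tS,F)\times\Fun(\tS,F)$ or by restricting attention throughout to nonempty $X$.
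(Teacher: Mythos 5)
Your proof is correct and follows essentially the same route as the paper's: the Galois-connection formalities that the paper dismisses as ``immediate'' and ``formal,'' followed by identifying $F[V(\Cong)]$ with the quotient of $\Pol(\tS,F)$ by the agreement-on-$V(\Cong)$ congruence. Your caveat on the final clause is well taken and in fact sharper than the paper's own argument, which silently passes from ``$f,g$ agree on $V(\Cong)$'' to ``$(f,g)\in\Cong$,'' i.e.\ uses $\Cong_{V(\Cong)}=\Cong$; as you observe, for an arbitrary congruence one only gets $F[V(\Cong)]\cong\Pol(\tS,F)/\Cong_{V(\Cong)}$ with $\Cong_{V(\Cong)}\supseteq\Cong$, so the displayed isomorphism should be read as holding for closed congruences, i.e.\ those of the form $\Cong_X$.
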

\begin{proof} The inverse correspondence is immediate, and the next
assertion is immediate. The 1:1 correspondence is then formal. To
see the last assertion, note that two polynomials in $f,g$ are
identified in $F[V(\Cong)]$ iff they agree on $ V(\Cong)$, which
by  definition is the point set on which every pair $(f,g) \in
\Cong$ agree; namely, $f$ and $g$ are identified in
$\Pol(\tS,F)/\Cong.$
\end{proof}

By the proposition, one sees that  for any nonempty subset $X
\subseteq \tSS$ we have  \begin{equation}\label{eq:1} F[X] \ds
\cong   \Pol(\tSS, F) / \Cong_X.
\end{equation}

\begin{defn}\label{10.34} A  \textbf{morphism} of affine layered algebraic sets  $\Phi :
X \to Y$ is a continuous function that preserves (i.e., pulls
back) regular functions,  in the sense that if $U$ is an open
subset of $Y$ and $\psi \in F(U),$ then $\psi \circ \Phi \in F (
\Phi ^{-1} (U))$.
\end{defn}

$\LAff$  is the category whose objects are the affine layered
algebraic sets $ X \subset F ^{(n)}$ and whose morphisms  $\Phi: X
\to Y$ are morphisms of layered affine algebraic sets .

\begin{prop}\label{10.35}

  Any morphism $\Phi : X \to Y$ of affine layered algebraic set  gives rise
to a natural algebra homomorphism  $\Phi^*: F (\Phi(U)) \to F(U)$,
by $\psi \mapsto \Phi^*(\psi),$
 where    $\Phi^*(\psi)(\bfa) =
 \psi(\Phi(\bfa)),$ for every  $\bfa \in U$.
\end{prop}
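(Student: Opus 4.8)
The plan is to check two things, writing $U$ for the open subset of $X$ appearing in the statement. First, that the pointwise prescription $\Phi^*(\psi)(\bfa):=\psi(\Phi(\bfa))$, $\bfa\in U$, actually produces an element of $F(U)$ — i.e.\ a \emph{regular} function, not merely some function on $U$ — so that $\Phi^*:F(\Phi(U))\to F(U)$ is well defined. Second, that $\Phi^*$ preserves the \semiring0 operations. I would then append the routine remark that $\Phi^*$ is natural, meaning compatible with composition and identities, so that $X\mapsto F(X)$, $\Phi\mapsto\Phi^*$ is a contravariant functor on $\LAff$.

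For well-definedness I would argue as follows. Fix $\psi\in F(\Phi(U))$. Since by Definition~\ref{coord2} the semiring $F(\Phi(U))$ is the natural image of $\Lau(\Phi(U),F)$, the function $\psi$ is induced by some Laurent polynomial $p\in F[\Lambda,\Lambda^{-1}]$; the same $p$ induces a regular function $\hat\psi\in F(Y)$ with $\hat\psi|_{\Phi(U)}=\psi$. Now $Y$ is open in $Y$, so the defining property of a morphism (Definition~\ref{10.34}), applied to $\hat\psi$ and the open set $Y$, gives $\hat\psi\circ\Phi\in F(\Phi^{-1}(Y))=F(X)$. Restricting to the open subset $U\subseteq X$ (via the evident Laurent analogue of Remark~\ref{rmk:XinYcord}) yields $(\hat\psi\circ\Phi)|_U\in F(U)$. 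Finally, for $\bfa\in U$ we have $\Phi(\bfa)\in\Phi(U)$, hence $(\hat\psi\circ\Phi)(\bfa)=\hat\psi(\Phi(\bfa))=\psi(\Phi(\bfa))=\Phi^*(\psi)(\bfa)$, so $\Phi^*(\psi)=(\hat\psi\circ\Phi)|_U\in F(U)$; no choice of representative is involved, since $\Phi^*(\psi)$ was defined pointwise from $\psi$ alone.

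The homomorphism property is then a pointwise verification using that the operations on function semirings are inherited pointwise from $F$ (Remark~\ref{Funobs}, Equations \eqref{point1} and \eqref{point2}): for $\psi_1,\psi_2\in F(\Phi(U))$ and $\bfa\in U$,
$$\Phi^*(\psi_1+\psi_2)(\bfa)=(\psi_1+\psi_2)(\Phi(\bfa))=\psi_1(\Phi(\bfa))+\psi_2(\Phi(\bfa))=(\Phi^*(\psi_1)+\Phi^*(\psi_2))(\bfa),$$
and likewise $\Phi^*(\psi_1\psi_2)=\Phi^*(\psi_1)\Phi^*(\psi_2)$, while the constant function $\one_F$ is sent to the constant function $\one_F$; the same computation shows $\Phi^*$ even respects the sorting, since $s(\Phi^*(\psi)(\bfa))=s(\psi(\Phi(\bfa)))$. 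Naturality is identical bookkeeping: for a further morphism $\Psi:Y\to Z$ and $\chi$ a regular function near $\Psi(\Phi(U))$, one has $(\Psi\circ\Phi)^*(\chi)(\bfa)=\chi(\Psi(\Phi(\bfa)))=\Phi^*(\Psi^*(\chi))(\bfa)$, and $\id^*=\id$. The one place that requires genuine care — the main obstacle — is the well-definedness step: knowing that $\bfa\mapsto\psi(\Phi(\bfa))$ is regular rather than merely continuous is exactly the content of the morphism axiom in Definition~\ref{10.34}, and what makes that axiom applicable is the observation that every element of $F(\Phi(U))$ extends to an element of $F(Y)$ (being induced by a Laurent polynomial). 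Everything after that is formal.
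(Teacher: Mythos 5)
Your proposal is correct, and its core — the pointwise verification that $\Phi^*$ preserves addition and multiplication via the pointwise operations on function \semirings0 — is exactly what the paper's proof consists of; the paper's entire argument is the two displayed identities $\Phi^*(\psi+\vrp)=\Phi^*(\psi)+\Phi^*(\vrp)$ and $\Phi^*(\psi\vrp)=\Phi^*(\psi)\Phi^*(\vrp)$. Where you go beyond the paper is in the well-definedness step: the paper silently takes for granted that $\bfa\mapsto\psi(\Phi(\bfa))$ lies in $F(U)$, whereas you supply the argument, and you correctly identify the one wrinkle — Definition~\ref{10.34} only pulls back regular functions on \emph{open} subsets of $Y$, while $\Phi(U)$ need not be open — and resolve it by extending $\psi$ to $\hat\psi\in F(Y)$ via a Laurent polynomial representative (Definition~\ref{coord2}), applying the morphism axiom to the open set $Y$ itself, and restricting; your remark that the final answer is independent of the chosen representative because $\Phi^*(\psi)$ is determined pointwise by $\psi$ closes the loop. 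This extra care buys a complete proof of the implicit claim that $\Phi^*$ actually lands in $F(U)$, and your naturality paragraph is the (routine) content needed for the functor $\tF_{\LCoord}$ defined immediately afterwards; neither addition conflicts with anything in the paper.
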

\begin{proof}  $ \Phi^*(\psi+\vrp) = \Phi ^* (\psi) +
\Phi ^* (\vrp)$ and $\Phi^* (\psi \vrp) =
 \Phi^* (\psi) \Phi^* (\vrp).$
\end{proof}

We conclude by introducing the functor linking the algebraic and
geometric (affine) categories.

\begin{defn}
$\tF_{\LCoord}$ is the contravariant functor from  $\LAff$ to
$\ULayDom$ given by sending an affine layered algebraic set $X$ to
its coordinate \semiring0 \ $F[X]$, and any morphism $\Phi : X \to
Y$ of affine layered algebraic sets to the layered \semiring0
homomorphism $\Phi^*: F[Y] \to F[X],$ i.e.,  $f \mapsto f_\Phi$
where $f_\Phi(\bfa) =
 f(\Phi(\bfa))$.
\end{defn}

Many subtleties lie behind this definition; for example, which
affine layered varieties correspond to the coordinate \semirings0
of tropical varieties satisfying the balancing condition? This
question is to be treated in a subsequent paper.


\begin{thebibliography}{1}

\bibitem{ABG}
M.~Akian, R.~Bapat, and S.~Gaubert.
\newblock Max-plus algebra,
\newblock In: Hogben, L., Brualdi, R., Greenbaum, A., Mathias, R. (eds.)
{\em Handbook of Linear Algebra}. Chapman and Hall, London, 2006.

\bibitem{AGG}
M.~Akian,   S.~Gaubert, and A.~Guterman.
\newblock Linear independence over tropical semirings and beyond.
\newblock In {\em Tropical and  Idempotent Mathematics}, G.L. Litvinov and S.N.
Sergeev, (eds.),
\newblock {\em Contemp. Math.},  495:1--38, 2009. 

\bibitem{Berk90}
V.~Berkovich.
\newblock Spectral theory and analytic geometry over
non-Archimedean fields.
\newblock Mathematical surveys and monographs {33},
\newblock {\em Amer. Math. Soc., 1990}

\bibitem{Berk2011}
V.~Berkovich.
\newblock Lecture at Lodz joint meeting of the Polish and Israel
Mathematical Societies,
\newblock {July, 2011}

\bibitem{BG}
R.~Bieri and R.~Groves.
\newblock The geometry of the set of characters induced by valuations.
\newblock {\em J. f\"ur die Reine und angevandte Mathematik}, 374:168--195, 1984.

\bibitem{B}
N.~Bourbaki.
\newblock  {\em Commutative Algebra.}
\newblock Paris and Reading, 1972.

\bibitem{CC}
A.~Connes,   C.~Consani.
\newblock Characteristsic 1, entropy, and the absolute point.
\newblock preprint at arXiv:math.0911.3537, 2009.

\bibitem{CHWW}
G.~Cortinas, C.~Haesemeyer, M.~Walker, and C.~Weibel.
\newblock Toric varieties, monoid schemes, and descent,
\newblock preprint, 2010.

\bibitem{Cos}
A.A.~Costa.
\newblock Sur la  th\^{e}orie g\'{e}n\'{e}rale des
demi-anneaux.
\newblock {Publ. Math. Decebren} {10} (1963), 14--29.


\bibitem{dickensteinDiscreminants}
A.~Dickenstein, E.~M. Feichtner, and B.~Sturmfels.
\newblock Tropical discriminants.
\newblock {\em J. Amer. Math. Soc.}, 20:1111--1133., 2007.
%

\bibitem{Einsiedler8311}
M.~Einsiedler, M.~Kapranov, and D.~Lind.
\newblock Non-\uppercase{A}rchimedean amoebas and tropical varieties.
\newblock {\em J. Reine Angew. Math.}, 601:139�--157, 2006.

\bibitem{golan92}
J.~Golan.
\newblock {\em The theory of semirings with applications in mathematics and
  theoretical computer science}, volume~54.
\newblock Longman Sci \&\ Tech., 1992.

\bibitem{HW}
H.C.~Hutchins and H.J.~Weinert.
\newblock Homomorphisms and kernels of semifields
\newblock {\em Periodica Mathematica Hungaria}, 21(2):113--152, 1990.

\bibitem{IMS}
I.~Itenberg, G.~Mikhalkin, and E.~Shustin.
\newblock {\em Tropical algebraic geometry}, vol.~35.
\newblock Birkhauser, 2007.
\newblock Oberwolfach seminars.

\bibitem{zur05TropicalAlgebra}
Z.~Izhakian.
\newblock Tropical arithmetic and matrix algebra.
\newblock {\em Commun. in Algebra},  37(4):1445--1468, {2009}.


\bibitem{IzhakianKnebuschRowen2009Valuation}
 Z.~Izhakian, M.~Knebusch, and L.~Rowen.
\newblock Supertropical semirings and supervaluations.
 \newblock  \emph{J. Pure and Appl.~Alg.}, 215(10):2431--2463, 2011.

\bibitem{IzhakianKnebuschRowen2009Refined}
Z.~Izhakian, M.~Knebusch, and L.~Rowen.
\newblock Layered tropical mathematics,
\newblock preprint at
 arXiv:0912.1398, 2011.

\bibitem{IKR3}
Z.~Izhakian, M.~Knebusch, and L.~Rowen.
\newblock {Dominance and transmissions in supertropical valuation theory}.
\newblock Comm. in Alg, to appear. (Preprint at arXiv:1102.1520.)




\bibitem{IKR5}
Z.~Izhakian, M.~Knebusch, and L.~Rowen.
\newblock {Supertropical monoids: Basics, canonical factorization, and lifting ghosts to tangibles},
\newblock preprint at arXiv:1108.1880,   2011.

\bibitem{IKR6}
Z.~Izhakian, M.~Knebusch, and L.~Rowen.
\newblock {Categories of layered semirings},
\newblock preprint,   2012.



\bibitem{IzhakianRowen2011Ideals}
Z.~Izhakian and L.~Rowen.
\newblock Ideals of polynomial semirings in tropical mathematics.   {\em J. Alg. and its Appl.},  to
appear. (Preprint at arXiv:1111.6253.)




\bibitem{IzhakianRowen2007SuperTropical}
Z.~Izhakian and L.~Rowen.
\newblock {Supertropical algebra}.
\newblock  {\em Adv. in Math.} 324(8):1860--1886, 2010.



\bibitem{IzhakianRowen2008Matrices}
Z.~Izhakian and L.~Rowen.
\newblock Supertropical matrix algebra.
\newblock {\em Israel J. Math.} {182}(1):383--424, 2011. 


\bibitem{IzhakianRowen2009Equations}
Z.~Izhakian and L.~Rowen.
\newblock Supertropical matrix agebra {II}: solving tropical equations.
\newblock {\em Israel J. Math.},  186(1):69-97, 2011. 




\bibitem{IzhakianRowen2008Resultants}
Z.~Izhakian and L.~Rowen.
\newblock Supertropical resultants.
\newblock  {\em J. Alg.}, 324(8):1860--1886, 2010.



%



\bibitem{IzhakianShustin2006}
Z.~Izhakian and E.~Shustin.
\newblock Idempotent semigroups and tropical sets. {\em J. EMS},
14:489--520, 2012.

\bibitem{Jac}
N.~Jacobson.
\newblock {\em Basic Algebra II.}
\newblock Freeman, 1980.

\bibitem{KZ1} M. Knebusch and D. Zhang, \textit{Manis Valuations and Pr\"ufer Extensions.
   I.   A New Chapter in Commutative Algebra}. Lecture Notes in Mathematics, 1791, Springer-Verlag,
   Berlin, 2002.

\bibitem{Litvinov2005}
G.~Litvinov.
\newblock The \uppercase{M}aslov dequantization, idempotent and tropical
  mathematics: a very brief introduction.
\newblock {\em J. of Math. Sciences}, 140(3):426--444, 2007.


\bibitem{MS09}
D.~Maclagan and B.~Sturmfels.
\newblock {\em Tropical Geometry}.
\newblock Preprint, 2009.




\bibitem{Par}
B.~Parker.
\newblock Exploded fibrations,
\newblock preprint at arXiv: 0705.2408v1, 2007.

\bibitem{Pass}  M.
Passare and  A. Tsikh.
\newblock Amoebas: their spines and their contours. {\em Contemp. Math.} 377:275--288, 2005.



\bibitem{Pat1}
 A.~Patchkoria.
 \newblock
On derived functors of semimodule-valued functors.
 \newblock {\em Proc. A.
Razmadze Math. Inst.} (in Russian) {83}:60--75,1986.

\bibitem{Pat2}
 A. Patchkoria,   \newblock Extension of semimodules and the Takahashi
 functor $\ext_A(C,A)$.
  \newblock {\em Homology, Homotopy, and Applications},   5:387--406, 2003.


\bibitem{Payne08}
S.~Payne.
\newblock Fibers of tropicalizations,
\newblock Arch. Math., 2010
\newblock Correction: preprint at arXiv:0705.1732v2  [math.AG], 2012.

\bibitem{Payne09}
S.~Payne.
\newblock Analytification is the limit of all tropicalizations,
\newblock preprint at arXiv: 0806.1916v3 [math.AG], 2009.

\bibitem{Row2006}
L.~Rowen.
\newblock {\em Graduate algebra: Commutative view}.
\newblock  Graduate Studies in Mathematics {73}, 2006.
\newblock American Mathematical Society {91}, 2006.

\bibitem{Row2006a}
L.~Rowen.
\newblock {\em Graduate algebra: Noncommutative view}.
\newblock  Graduate Studies in Mathematics 73, 2006.
\newblock American Mathematical Society, 2006.

\bibitem{She}
E.~Sheiner and S.~Shnider.
\newblock An exploded-layered version of Payne's generalization of Kapranov's theorem.
\newblock preprint, 2012.

\bibitem{Shustin1278}
E.~Shustin.
\newblock Patchworking singular algebraic curves, non-\uppercase{A}rchimedean
  amoebas and enumerative geometry. {\em Algebra i Analiz},
  17(2):170-214, 2005.


\bibitem{Takahashi}
\newblock M.Takahashi. \newblock Extension of semimodules I.
{\em Math.Sem.Notes Kobe. Univer.} 10:563--592, 1982.

\bibitem{W}
J.~Worthington.
\newblock A bialgebra approach to automata and formal language
theory.
 \newblock \emph{Proceedings of the 2009 International Symposium on
Logical Foundations of Computer Science},
 \newblock
 Proceeding LFCS 09,
451--467, 2009.



\end{thebibliography}
\end{document}